\documentclass{amsart}
\usepackage{amssymb, amsmath, amsthm, graphics, comment, xspace, enumerate}
\baselineskip 18pt

% ----------------------------------------------------------------
\vfuzz2pt % Don't report over-full v-boxes if over-edge is small
\hfuzz2pt % Don't report over-full h-boxes if over-edge is small
% THEOREMS -------------------------------------------------------
\newtheorem{thm}{Theorem}[section]

\newtheorem{lem}[thm]{Lemma}
\newtheorem{prop}[thm]{Proposition}
\theoremstyle{definition}
\newtheorem{defn}[thm]{Definition}
\newtheorem{example}[thm]{Example}
\theoremstyle{remark}
\newtheorem{rem}[thm]{Remark}
\numberwithin{equation}{section}
% MATH -----------------------------------------------------------

%\newcommand{\ri}{\simbol{'32}}}

\newcommand{\DD}{\mathcal D}
\newcommand{\SSS}{\mathcal S}

% ----------------------------------------------------------------
\begin{document}
\title[$C$-distribution semigroups and $C$-ultradistribution semigroups...]{$C$-distribution semigroups and $C$-ultradistribution semigroups
in locally convex spaces}
\author{Marko Kosti\' c}
\address{Faculty of Technical Sciences,
University of Novi Sad,
Trg D. Obradovi\' ca 6, 21125 Novi Sad, Serbia}
\email{marco.s@verat.net}

\author{Stevan Pilipovi\' c}
\address{Department for Mathematics and Informatics,
University of Novi Sad,
Trg D. Obradovi\' ca 4, 21000 Novi Sad, Serbia}
\email{pilipovic@dmi.uns.ac.rs}

\author{Daniel Velinov}
\address{Department for Mathematics, Faculty of Civil Engineering, Ss. Cyril and Methodius University, Skopje,
Partizanski Odredi
24, P.O. box 560, 1000 Skopje, Macedonia}
\email{velinovd@gf.ukim.edu.mk}

{\renewcommand{\thefootnote}{} \footnote{2010 {\it Mathematics
Subject Classification.} 47D06, 47D60, 47D62, 47D99, 45N05.
\\ \text{  }  \ \    {\it Key words and phrases.} $C$-distribution semigroups, $C$-ultradistribution semigroups,
integrated $C$-semigroups, convoluted $C$-semigroups, well-posedness, locally convex spaces.
\\  \text{  }  \ \ This research is partially supported by grant 174024 of Ministry
of Science and Technological Development, Republic of Serbia.}}

\begin{abstract}
The main purpose of this paper is to investigate $C$-distribution semigroups and $C$-ultradistribution semigroups in the setting of
sequentially complete locally convex spaces. There are a few important theoretical novelties in this field and there are given  some interesting examples. Stationary dense operators in sequentially complete locally convex space are considered.

\end{abstract}
\maketitle

\section{Introduction and Preliminaries}
It is well known that the class of distribution semigroups in Banach spaces was introduced by J. L. Lions \cite{li121} in 1960
as an attempt to seek for the solutions of abstract first order differential equations that are not well-posed in the usual sense, i.e.,
whose solutions are not governed by strongly continuous semigroups of linear operators. From then on,
distribution semigroups have attracted the attention of a large number of mathematicians (cf. \cite{a22}, \cite{b52}-\cite{barbu1}, \cite{fat1}-\cite{fujiwara}, \cite{lar},  \cite{peet} and \cite{yosi} for more details about distribution semigroups in Banach spaces with densely defined generators). Following the pioneering works by D. Prato, E. Sinestrari \cite{d81}, W. Arendt \cite{a11} and E. B. Davies, M. M. Pang \cite{d811qw} (cf. also S. \=Ouchi \cite{o192}), there has been growing interest in dropping the usually imposed density assumptions in the theory of first order differential equations and discussing various generalizations of strongly continuous semigroups, such as integrated semigroups, $C$-regularized semigroups and $K$-convoluted semigroups (cf. \cite{knjigah} for a comprehensive survey of results).
The class of distribution semigroups with not necessarily densely defined generators has been introduced independently by P. C. Kunstmann \cite{ku112} and S. W. Wang \cite{w241}, while the class of $C$-distribution semigroups has been introduced by the first named author in  \cite{ko98}  (cf.
\cite{ki90}, \cite{knjigah}, \cite{ku101}, \cite{ku112},
\cite{kmp} and \cite{me152}-\cite{mija} for further information in this direction).  Ultradistribution semigroups in Banach spaces, with densely or non-densely defined generators, and abstract Beurling spaces
have been analyzed in the papers of R. Beals \cite{b41}-\cite{b42}, J. Chazarain \cite{cha}, I. Cior\u anescu \cite{ci1}, I. Cior\u anescu, L.  Zsido \cite{cizi}, P. R. Chernoff \cite{chernof}, H. A. Emami-Rad
\cite{er} and H. Komatsu \cite{k92} (cf. also \cite{knjigah}, \cite{diff1}, \cite{ptica}-\cite{tica}, \cite{ku113} and \cite{me152}).
On the other hand, the study of distribution semigroups in locally convex spaces has been initiated by R. Shiraishi, Y. Hirata \cite{1964}, T. Ushijima \cite{ush1} and M. Ju Vuvunikjan \cite{vuaq}. For the best knowledge of the authors, there is no significant reference which treats ultradistribution semigroups in locally convex spaces.\\

In this paper, we introduce and systematically analyze the classes of\\ $C$-distribution semigroups and $C$-ultradistribution semigroups in the setting of
sequentially complete locally convex spaces, providing additionally a large amount of relevant references on the subjects under consideration. We provide a few theoretical novelties. For example,
the notion of a pre-distribution semigroup and the notion of a non-dense distribution semigroup seem to be completely new and not considered elsewhere, with the exception of classical case that $E$ is a Banach space.

The definition of a stationary dense operators is continuation of an investigation on dense distribution semigroups. By studying $A$ in a sequentially complete locally convex space $E$ we can study the behavior of $A_{\infty}$ in $D_{\infty}(A)$. Naturally, we can pose the question are they somehow related. The answer is affirmative. If $A$ is stationary dense for a closed linear operator with some additional condition, the information lost in passing from $A$ in $E$ to $A_{\infty}$ in $D_{\infty}(A)$ can be retrieved. In case of a dense ultradistribution semigroups the definition of stationary dense operators can not be considered.\\
In \cite{ku101} are defined stationary dense operators in Banach space $E$ and their connections to dense distribution semigroups. When we are dealing with semigroups on locally convex spaces in order to exist the resolvent of $A$, where $A$ is an infinitesimal generator for the semigroup, we suppose that the semigroups are equicontinuous. Further on, it will be given new examples on stationary dense operators on sequentially complete locally convex spaces and results of \cite{ku101}, \cite{ush}, \cite{fujiwara} will be extended.

The organization of paper can be briefly described as follows. In Section 2, we analyze the $C$-wellposedness of first order Cauchy problem in the sense of distributions and ultradistributions, paying special accent on the study of $C$-generalized resolvents of linear operators in Subsection 2.1. Section 3 is devoted to the study of main structural properties of $C$-distribution semigroups and $C$-ultradistribution semigroups. In section 4 are considered stationary dense operators in sequentially complete locally convex spaces following the investigation in \cite{ku101}.
%At last, we give some examples in order to illustrate the theory of $C$-regularized generalized semigroups.

\subsection{Notation}
We use the standard notation throughout the paper.
Unless specified otherwise,
we shall always assume
that $E$ is a Hausdorff sequentially complete
locally convex space over the field of complex numbers, SCLCS for short.
If $X$ is also a SCLCS, then we denote by
$L(E,X)$ the space consisting of all continuous linear mappings from $E$ into
$X;$ $L(E)\equiv L(E,E).$ By $\circledast_{E}$ ($\circledast$, if there is no risk for confusion), we denote the fundamental system of seminorms which defines the topology of $E.$
By $L_{\circledast}(E)$ we denote the subspace of $L(E)$ consisting of those continuous linear mappings $T$ from $E$ into $E$ satisfying that for each $p\in \circledast$ there exists $c_{p}>0$ such that
$p(Tx)\leq c_{p} p(x),$ $x\in E.$
Let ${\mathcal B}$ be the family of bounded subsets\index{bounded subset} of $E,$ and
let $p_{B}(T):=\sup_{x\in B}p(Tx),$ $p\in \circledast_{X},$ $B\in
{\mathcal B},$ $T\in L(E,X).$ Then $p_{B}(\cdot)$ is a seminorm\index{seminorm} on
$L(E,X)$ and the system $(p_{B})_{(p,B)\in \circledast_{X} \times
{\mathcal B}}$ induces the Hausdorff locally convex topology on
$L(E,X).$
If $E$ is a Banach space, then we denote by $\|x\|$ the norm of an element $x\in E.$
The Hausdorff locally convex topology on $E^{\ast},$ the dual space\index{dual space} of $E,$
defines the
system $(|\cdot|_{B})_{B\in {\mathcal B}}$ of seminorms on
$E^{\ast},$ where and in the sequel $|x^{\ast}|_{B}:=\sup_{x\in
B}|\langle x^{\ast}, x \rangle |,$ $x^{\ast} \in E^{\ast},$ $B\in
{\mathcal B}.$ Here $\langle \ , \ \rangle $ denotes the duality bracket between
$E$ and $E^{\ast},$ sometimes we shall also write $\langle x , x^{\ast} \rangle$
or $x^{\ast}(x)$ to denote the value of
$\langle x^{\ast}, x \rangle .$
Let us recall that the spaces $L(E)$ and $E^{\ast}$ are sequentially
complete provided that $E$ is barreled\index{barreled space} (\cite{meise}). By $E^{\ast \ast}$ we denote the bidual of $E.$
Recall, the polars of nonempty sets $M\subseteq E$ and $N\subseteq E^*$ are defined as follows
$M^{\circ}:=\{y\in E^*:|y(x)|\leq 1\text{ for all } x\in M\}$ and
$N^{\circ}:=\{x\in E:\;|y(x)|\leq 1\text{ for all } y\in N\}.$
If $A$ is a linear operator
acting on $E,$
then the domain, kernel space and range of $A$ will be denoted by
$D(A),$ $N(A)$ and $R(A),$
respectively. Since no confusion
seems likely, we will identify $A$ with its graph. In the remaining part of this paragraph, we assume that the operator $A$ is closed. Set
$p_{A}(x):=p(x)+p(Ax),$ $x\in D(A),$ $p\in \circledast$.
Then the calibration $(p_{A})_{p\in \circledast}$ induces the Hausdorff sequentially complete locally convex topology on $D(A);$
we denote this space simply by $[D(A)].$
Set $D_{\infty}(A):=\bigcap_{n=1}^{\infty}D(A^n).$ Then the space $D_{\infty}(A),$ equipped with the following system of seminorms $p_{n}(x):=p(x)+p(Ax)+p(A^2x)+\cdot \cdot \cdot+p(A^nx),$ $x\in D_{\infty}(A)$ ($n \in {\mathbb N},$ $p\in \circledast$) becomes an SCLCS; we will denote this space by
$[D_{\infty}(A)].$ Clearly, if $A$ is a Fr\' echet space, then $[D_{\infty}(A)]$ is a Fr\' echet space, as well.
If $C\in L(E)$ is injective, then we define the
$C$-resolvent set of $A,$
$\rho_{C}(A)$ for short, by
\begin{equation}\label{C-res}
\rho_{C}(A):=\Bigl\{\lambda \in {\mathbb C} : \lambda -A \mbox{ is
injective and } (\lambda-A)^{-1}C\in L(E)\Bigr\}.
\end{equation}
By the closed graph theorem\index{Closed Graph Theorem} \cite{meise}, the following holds:
If $E$ is a webbed bornological space\index{space!webbed bornological}
(this, in particular, holds if $E$ is a Fr\' echet space\index{space!Fr\' echet}), then the
$C$-resolvent set\index{$C$-resolvent set} of $A$ consists of those complex numbers $\lambda$
for which the operator $\lambda -A$ is injective and $R(C)
\subseteq R(\lambda -A).$ The resolvent set of $A,$ denoted by $\rho(A),$ is nothing else but the
$I$-resolvent set of $A,$ where $I$ denotes the identity operator on $E.$ Unless stated otherwise,
we shall always assume that $CA\subseteq AC.$
By $\sigma_{p}(A),$ $\sigma_{c}(A)$ and $\sigma_{r}(A)$ we denote the point, continuous and residual spectrum of $A,$ respectively.
For a closed linear operator
$A$, we introduce the subset $A^*$ of $E^*\times E^*$ by
$$
A^{*}:=\Bigl\{\bigl(x^*, y^*\bigr)\in E^*\times E^*:x^*(Ax)=y^*(x)\text{ for all
}x\in D(A)\Bigr\}.
$$
If $A$ is densely defined, then $A^*$ is also known as the adjoint operator\index{operator!adjoint} of $A$ and it
is
a closed linear operator on $E^*$.

The exponential region $E(a,b)$ has been defined for the first time by W. Arendt, O. El--Mennaoui and V. Keyantuo in \cite{a22}:
$$
e(a,b):=\Bigl\{\lambda\in\mathbb{C}:\Re\lambda\geq b,\:|\Im\lambda|\leq e^{a\Re\lambda}\Bigr\} \ \ (a,\ b>0).
$$

%Define
%$\Sigma_{\alpha}:=\{ z\in {\mathbb C} \setminus \{0\} :
%|\arg (z)|<\alpha \}$ ($\alpha \in (0,\pi]$). If $f:\mathbb{R}\to\mathbb{C}$, put $(\tau_tf)(s):=f(s-t)$, $s\in\mathbb{R}$, $t\in\mathbb{R}$. The exponential region $E(a,b)$ has been defined for the first time by W. Arendt, O. El--Mennaoui and V. Keyantuo in \cite{a22}:
%$$
%E(a,b):=\Bigl\{\lambda\in\mathbb{C}:\Re\lambda\geq b,\:|\Im\lambda|\leq e^{a\Re\lambda}\Bigr\} \ \ (a,\ b>0).
%$$
%If $V$ is a general topological vector space,
%then a function $f :
%\Omega \rightarrow V,$ where $\Omega$ is an open subset of ${\mathbb
%C},$ is said to be analytic if it is locally expressible in a
%neighborhood of any point $z\in \Omega$ by a uniformly convergent
%power series with coefficients in $V.$
%We refer the reader to \cite[Section 1.1]{knjigaho} and references cited there for the basic information about vector-valued analytic functions. In our approach the space $E$ is sequentially complete, so that the analyticity of mapping
%$f: \Omega \rightarrow E$ ($\emptyset \neq \Omega \subseteq {\mathbb C}$) is equivalent with its weak analyticity
%(observe that in Definition \ref{barbu-v} below we consider $L(E)$-valued analytic functions and that the space $L(E)$ is not necessarily sequentially complete if $E$ is not barreled).
\subsection{Structural properties}
Now we are going to explain the notions of various types of generalized function spaces used throughout the paper.
We begin with the recollection of the most important properties of vector-valued distribution spaces (cf. \cite{ant1},
\cite{a43}, \cite{fat1}, \cite{komatsu}, \cite{knjigah}, \cite{kothe1}, \cite{martinez}-\cite{meise}, \cite{stan},
 \cite{sch16}, \cite{yosida} and references cited therein
for the basic information in this direction).
The Schwartz spaces of test functions $\mathcal{D}=C_0^{\infty}(\mathbb{R})$
and $\mathcal{E}=C^{\infty}(\mathbb{R})$
carry the usual topologies.
%while the topology
%of the space of rapidly decreasing functions $\mathcal{S}$ defines the following system of seminorms
%$$
%p_{m,n}(\psi):=\sup_{x\in\mathbb{R}}\, \bigl|x^m\psi^{(n)}(x)\bigr|,\quad
%\psi\in\mathcal{S},\ m,\ n\in\mathbb{N}_0.
%$$
%If $\emptyset \neq \Omega  \subseteq {\mathbb R},$ then we denote by $\mathcal{D}_{\Omega}$ the subspace of $\mathcal{D}$ consisting of those functions $\varphi \in \mathcal{D}$ for which supp$(\varphi) \subseteq \Omega;$ $\mathcal{D}_{0}\equiv \mathcal{D}_{[0,\infty)}.$
The spaces
$\mathcal{D}'(E):=L(\mathcal{D},E)$,
$\mathcal{E}'(E):=L(\mathcal{E},E)$ and
$\mathcal{S}'(E):=L(\mathcal{S},E)$
are topologized in the very obvious way;
$\mathcal{D}'_{\Omega}(E)$,
$\mathcal{E}'_{\Omega}(E)$ and $\mathcal{S}'_{\Omega}(E)$ denote the subspaces of
$\mathcal{D}'(E)$, $\mathcal{E}'(E)$ and $\mathcal{S}'(E)$,
respectively, containing $E$-valued distributions
whose supports are contained in $\Omega ;$ $\mathcal{D}'_{0}(E)\equiv \mathcal{D}'_{[0,\infty)}(E)$, $\mathcal{E}'_{0}(E)\equiv \mathcal{E}'_{[0,\infty)}(E)$, $\mathcal{S}'_{0}(E)\equiv \mathcal{S}'_{[0,\infty)}(E).$
In the case that $E={\mathbb C},$ then the above spaces are also denoted by $\mathcal{D}',$
$\mathcal{E}',$
$\mathcal{S}',$ $\mathcal{D}'_{\Omega},$
$\mathcal{E}'_{\Omega},$
$\mathcal{S}'_{\Omega},$
$\mathcal{D}_0'$,
$\mathcal{E}_0'$ and $\mathcal{S}_0'.$
%By a regularizing
%sequence in $\mathcal{D}$ we mean any sequence $(\rho_n)_{n\in {\mathbb N}}$ in
%$\mathcal{D}_0$ for which there exists a function $\rho\in\mathcal{D}$ satisfying $\int_{-\infty}^{\infty}\rho
%(t)\,dt=1,$ supp$(\rho)\subseteq [0,1]$ and $\rho_n(t)=n\rho(nt)$,
%$t\in\mathbb{R},$ $n\in {\mathbb N}.$
If $\varphi$, $\psi:\mathbb{R}\to\mathbb{C}$ are
measurable functions, then we define the convolution products $\varphi*\psi$
and $\varphi*_0\psi$ by
$$
\varphi*\psi(t):=\int\limits_{-\infty}^{\infty}\varphi(t-s)\psi(s)\,ds\mbox{ and }
\varphi*_0
\psi(t):=\int\limits^t_0\varphi(t-s)\psi(s)\,ds,\;t\;\in\mathbb{R}.
$$
Notice that $\varphi*\psi=\varphi*_0\psi$, provided that supp$(\varphi)$ and supp$(\psi)$ are subsets of $[0,\infty).$
Given $\varphi\in\mathcal{D}$ and $f\in\mathcal{D}'$, or $\varphi\in\mathcal{E}$ and $f\in\mathcal{E}'$,
we define the convolution $f*\varphi$ by $(f*\varphi)(t):=f(\varphi(t-\cdot))$, $t\in\mathbb{R}$.
For $f\in\mathcal{D}'$, or for $f\in\mathcal{E}'$,
define $\check{f}$ by $\check{f}(\varphi):=f(\varphi (-\cdot))$, $\varphi\in\mathcal{D}$ ($\varphi\in\mathcal{E}$).
Generally, the convolution of two distribution $f$, $g\in\mathcal{D}'$, denoted by $f*g$,
is defined by $(f*g)(\varphi):=g(\check{f}*\varphi)$, $\varphi\in\mathcal{D}$.
Then we know that $f*g\in\mathcal{D}'$ and supp$(f*g)\subseteq$supp$ (f)+$supp$(g)$.

Let $G$ be an $E$-valued distribution, and let $f : {\mathbb R} \rightarrow E$ be a locally integrable function (cf. \cite[Definition 1.1.4, Definition 1.1.5]{knjigaho}).
As in the scalar-valued case, we define the $E$-valued distributions
$G^{(n)}$ ($n\in {\mathbb N}$) and $ hG$ ($h\in {\mathcal E}$); the regular $E$-valued distribution ${\mathbf f}$ is defined by ${\mathbf f}(\varphi):=\int_{-\infty}^{\infty}\varphi (t)  f(t) \, dt$ ($\varphi \in {\mathcal D}$).
We need the following auxiliary lemma whose proof can be deduced as in the scalar-valued case (\cite{stan}).

\begin{lem}\label{polinomi}
Suppose that $0<\tau \leq \infty,$ $n\in {\mathbb N}$. If $f : (0,\tau) \rightarrow E$ is a continuous function and
$$
\int \limits^{\tau}_{0}\varphi^{(n)}(t)f(t)\, dt=0,\quad \varphi \in {\mathcal D}_{(0,\tau)},
$$
then there exist elements $x_{0},\cdot \cdot \cdot, x_{n-1}$ in $E$ such that $f(t)=\sum^{n-1}_{j=0}t^{j}x_{j},$ $t\in (0,\tau).$
\end{lem}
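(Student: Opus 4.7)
The plan is to reduce Lemma \ref{polinomi} to the scalar-valued statement (which is quoted from \cite{stan}) by combining Hahn--Banach duality with a Vandermonde interpolation trick that produces the required elements as honest members of $E$, not merely as elements of some completion.

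First I fix $n$ pairwise distinct points $t_{1},\ldots,t_{n}\in (0,\tau)$. The Vandermonde matrix $V=(t_{i}^{j})_{1\le i\le n,\,0\le j\le n-1}$ is invertible, so the linear system $\sum_{j=0}^{n-1}t_{i}^{j}x_{j}=f(t_{i})$, $i=1,\ldots,n$, has a unique solution $(x_{0},\ldots,x_{n-1})$; each $x_{j}$ is a finite $\mathbb{C}$-linear combination of $f(t_{1}),\ldots,f(t_{n})$ and hence genuinely lies in $E$. Define $g(t):=f(t)-\sum_{j=0}^{n-1}t^{j}x_{j}$ for $t\in (0,\tau)$. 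Then $g$ is continuous on $(0,\tau)$, vanishes at $t_{1},\ldots,t_{n}$, and because repeated integration by parts yields $\int_{0}^{\tau}\varphi^{(n)}(t)t^{j}\,dt=0$ for each $j=0,\ldots,n-1$ and each $\varphi\in\mathcal{D}_{(0,\tau)}$, the function $g$ continues to satisfy the hypothesis $\int_{0}^{\tau}\varphi^{(n)}(t)g(t)\,dt=0$.

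Next I pass to scalars. For an arbitrary $x^{\ast}\in E^{\ast}$, the function $h(t):=\langle x^{\ast},g(t)\rangle$ is continuous on $(0,\tau)$, and since $x^{\ast}$ commutes with the convergent Riemann integral of a continuous $E$-valued function, $\int_{0}^{\tau}\varphi^{(n)}(t)h(t)\,dt=0$ for all $\varphi\in\mathcal{D}_{(0,\tau)}$. This is exactly the scalar-valued hypothesis, so the scalar version of the lemma gives that $h$ is a polynomial of degree at most $n-1$. Since $h(t_{i})=0$ for $n$ distinct values $t_{1},\ldots,t_{n}$, we must have $h\equiv 0$. Thus $\langle x^{\ast},g(t)\rangle=0$ for every $x^{\ast}\in E^{\ast}$ and every $t\in(0,\tau)$, and because $E^{\ast}$ separates points of the Hausdorff locally convex space $E$ (Hahn--Banach), it follows that $g\equiv 0$, which is the desired identity $f(t)=\sum_{j=0}^{n-1}t^{j}x_{j}$.

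The only nontrivial input is the scalar case, and I would sketch it for completeness by iterating the classical fact that $\int_{0}^{\tau}\varphi'(t)u(t)\,dt=0$ for all $\varphi\in\mathcal{D}_{(0,\tau)}$ forces a continuous scalar $u$ to be constant: fix $\rho\in\mathcal{D}_{(0,\tau)}$ with $\int\rho=1$, write an arbitrary test function as $\psi=\bigl(\int\psi\bigr)\rho+\varphi'$ with $\varphi(t)=\int_{0}^{t}\bigl(\psi-(\int\psi)\rho\bigr)\,ds\in\mathcal{D}_{(0,\tau)}$, and conclude that $\int u\psi$ depends only on $\int\psi$. Applying this $n$ times to successive antiderivatives yields that the scalar $f$ is a polynomial of degree less than $n$. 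The main conceptual point in the vector-valued extension is not this iteration but the interpolation step: it is the Vandermonde construction that upgrades coefficients a priori living in the algebraic dual $(E^{\ast})^{\prime}$ to actual elements of $E$, circumventing any completeness subtlety in sequentially complete locally convex spaces.
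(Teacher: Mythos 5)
Your proof is correct. Note that the paper itself gives no detailed argument for Lemma \ref{polinomi}: it is accompanied only by the remark that the proof ``can be deduced as in the scalar-valued case (\cite{stan})'', i.e., the intended route is to rerun the scalar proof with $E$-valued Riemann integrals, using the sequential completeness of $E$ to define the relevant antiderivatives and mollifications and to produce the coefficients $x_{j}$ directly as $E$-valued integrals of $f$ against fixed test functions. You instead treat the scalar statement as a black box and upgrade it by Hahn--Banach duality, with the Vandermonde interpolation at $n$ distinct points $t_{1},\ldots,t_{n}$ doing the real work: it manufactures candidate coefficients $x_{0},\ldots,x_{n-1}$ as honest finite linear combinations of the values $f(t_{i})\in E$, so that after subtracting the candidate polynomial you only need the scalar lemma applied to $\langle x^{\ast},g(\cdot)\rangle$, the elementary fact that a polynomial of degree at most $n-1$ with $n$ roots vanishes, and the separation of points by $E^{\ast}$. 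This is a genuinely different and in some ways more economical reduction: the only $E$-valued integral it uses is the one already present in the hypothesis, and it neatly sidesteps the standard defect of naive dualization, namely that coefficients extracted separately for each $x^{\ast}\in E^{\ast}$ would a priori be only linear functionals on $E^{\ast}$ rather than elements of $E$. What the paper's direct adaptation buys in exchange is self-containedness (no appeal to the scalar case as an external result) and an explicit integral formula for the $x_{j}$. One small caveat: your closing claim that the interpolation circumvents ``any completeness subtlety'' is mildly overstated, since the hypothesis itself presupposes that $\int_{0}^{\tau}\varphi^{(n)}(t)f(t)\,dt$ exists in $E$ (which is where sequential completeness enters), and your appeal to interchanging $x^{\ast}$ with the integral uses this; but this does not affect the correctness of the argument. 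Your sketch of the scalar case via the decomposition $\psi=(\int\psi)\rho+\varphi'$ and iteration is the standard argument from \cite{stan} and is fine.
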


Let $\tau>0,$ and let $X$ be a general Hausdorff locally convex space (not necessarily sequentially complete). Following L. Schwartz \cite{sch166}, it will be said that a distribution $G\in {\mathcal D}'(X)$ is of finite order on the interval $(-\tau,\tau)$ iff
there exist an integer $n\in {\mathbb N}_{0}$ and an $X$-valued continuous function $f : [-\tau,\tau] \rightarrow X$
such that
$$
G(\varphi)=(-1)^{n}\int^{\tau}_{-\tau}\varphi^{(n)}(t)f(t)\, dt,\quad \varphi \in {\mathcal D}_{(-\tau,\tau)}.
$$

Let us recall that the spaces of Beurling,
respectively, Roumieu ultradifferentiable functions are
defined by $\mathcal{D}^{(M_p)}:=\mathcal{D}^{(M_p)}(\mathbb{R})
:=\text{indlim}_{K\Subset\Subset\mathbb{R}}\mathcal{D}^{(M_p)}_K$,
respectively,
$\mathcal{D}^{\{M_p\}}:=\mathcal{D}^{\{M_p\}}(\mathbb{R})
:=\text{indlim}_{K\Subset\Subset\mathbb{R}}\mathcal{D}^{\{M_p\}}_K$, (where $K$ goes through all compact sets in ${\mathbb R}$
where
$\mathcal{D}^{(M_p)}_K:=\text{projlim}_{h\to\infty}\mathcal{D}^{M_p,h}_K$,
respectively, $\mathcal{D}^{\{M_p\}}_K:=\text{indlim}_{h\to 0}\mathcal{D}^{M_p,h}_K$,
\begin{align*}
\mathcal{D}^{M_p,h}_K:=\bigl\{\phi\in C^{\infty}(\mathbb{R}): \text{supp}(\phi) \subseteq K,\;\|\phi\|_{M_p,h,K}<\infty\bigr\},
\end{align*}

\begin{align*}
\|\phi\|_{M_p,h,K}:=\sup\Biggl\{\frac{h^p\bigl|\phi^{(p)}(t)\bigr|}{M_p} : t\in K,\;p\in\mathbb{N}_0\Biggr\}.
\end{align*}
Henceforth the asterisk $*$ stands for both cases. Let $\emptyset \neq \Omega \subseteq {\mathbb R}.$
The spaces
$\mathcal{D}'^*(E):=L(\mathcal{D}^*, E)$, $\mathcal{D}^{*}_{\Omega}$, $\mathcal{D}^{\ast}_0$, $\mathcal{E}'^{*}_{\Omega}$, $\mathcal{E}'^{*}_{0}$, $\mathcal{D}'^{*}_{\Omega}(E)$ and $\mathcal{D}'^{*}_{0}(E)$ are defined as in the case of Schwartz spaces.
An entire function of the form
$P(\lambda)=\sum_{p=0}^{\infty}a_p\lambda^p$,
$\lambda\in\mathbb{C}$ is of class $(M_p)$, respectively, of
class $\{M_p\}$, if there exist $l>0$ and $C>0$, respectively, for every
$l>0$ there exists a constant $C>0$, such that $|a_p|\leq Cl^p/M_p$,
$p\in\mathbb{N};$ cf. \cite{k91} for further information.
The corresponding ultradifferential operator
$P(D)=\sum_{p=0}^{\infty}a_p D^p$ is of class $(M_p)$, respectively, of
class $\{M_p\}$. Since $(M_{p})$ satisfies (M.2), the ultradifferential operator $P(D)$ of $\ast$-class
$$
\langle P(D)G,\varphi \rangle :=\langle G, P(-D)\varphi \rangle,\quad G\in \mathcal{D}'^*(E),\ \varphi \in \mathcal{D}^*,
$$
is a continuous linear mapping from $\mathcal{D}'^*(E)$ into $\mathcal{D}'^*(E).$ The multiplication by a function $a\in {\mathcal E}^{\ast}(\Omega)$,
convolution of scalar valued ultradistributions (ultradifferentiable
functions), and the notion of a regularizing sequence in $\mathcal{D}^*,$ are defined as in the case of
distributions;
we know that there exists
a regularizing sequence in $\mathcal{D}$ ($\mathcal{D}^*$). If $\varphi\in\mathcal{D}^{\ast}$ ($T\in {\mathcal E}'^{\ast}$) and $G\in\mathcal{D}'^{\ast}(E)$, then
$\varphi \ast G \in {\mathcal E}^{\prime \ast}(E)$ and $T \ast G \in  {\mathcal D}^{\prime \ast}(E)$ (cf. \cite[p. 685]{k82}, and
\cite[Definition 3.9]{k82} for the notion of space ${\mathcal E}^{\prime \ast}(E)$).

Following \cite[Definition 4.5]{k82}, we say that a vector-valued ultradistribution $G\in\mathcal{D}'^*(E)$ is bounded iff it maps any neighborhood of zero in $\mathcal{D}^*$ into a bounded subset of $E.$ A vector-valued ultradistribution $G\in\mathcal{D}'^*(E)$ is
said to be locally bounded iff for every compact subset $K$ of $\Omega ,$ $G$ maps any neighborhood of zero in $\mathcal{D}^{*}_{K}$ into a bounded subset of $E.$ Recall that any vector-valued ultradistribution $G\in\mathcal{D}'^*(E)$ is bounded (locally bounded)
if $E$ is metrizable (if $E$ is a (DF) space).

\begin{thm}\label{1.3.1.4} (\cite[Theorem 4.6, Theorem 4.7]{k82})
 \emph{(i)}  Let $G\in\mathcal{D}'^*(E)$ be locally bounded.
Then, for each relatively compact non-empty open set $\Omega\subseteq\mathbb{R},$
there exists a sequence of continuous function $(f_n)$ in $E^{\overline{\Omega}}$ such that
$$
G_{|\Omega}=\sum_{n=0}^{\infty}D^nf_n
$$
and there exists $L>0$ in the Beurling case, resp.,
for every $L>0$ in the Roumieu case,
such that the set $\{M_{n}L^{n}f_n(t) : t\in\overline{\Omega},\ n\in\mathbb{N}\}$ is bounded in $E.$

 \emph{(ii)} Let $G\in\mathcal{D}'^*(E)$ be locally bounded. Suppose, additionally, that $(M_p)$ satisfies \emph{(M.3)}.
Then for each relatively compact non-empty open set $\Omega\subseteq\mathbb{R}$
there exist an ultradifferential operator of $*$-class and a continuous function $f:\overline{\Omega}\to E$
such that $G_{|\Omega}=P(D)f$.
\end{thm}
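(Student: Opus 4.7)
The plan is to reduce the structure problem to a duality/Hahn--Banach argument after realizing each test function by the sequence of its rescaled derivatives, and then, under $(M.3)$, collapse the infinite sum into a single ultradifferential operator acting on a parametrix.

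For part (i), fix a relatively compact non-empty open $\Omega\subseteq\mathbb{R}$ and choose a compact $K$ with $\overline{\Omega}\subset\mathring{K}$. Local boundedness of $G$ on $\mathcal{D}^{\ast}_{K}$ means: in the Beurling case, there exist $L>0$ and a $\circledast$-continuous seminorm $p$ on $E$ such that $p(G(\varphi))\leq \|\varphi\|_{M_{p},L,K}$ for all $\varphi\in\mathcal{D}^{(M_{p})}_{K}$; in the Roumieu case the same holds for every $L>0$ with $p=p_{L}$ depending on $L$. Now define
\[
J:\mathcal{D}^{M_{p},L}_{K}\longrightarrow c_{0}\bigl(\mathbb{N}_{0};C(\overline{\Omega})\bigr),\qquad J\varphi:=\Bigl(\frac{L^{n}\varphi^{(n)}}{M_{n}}\Bigr)_{n\geq 0},
\]
which is a topological embedding because the target norm agrees with $\|\cdot\|_{M_{p},L,K}$ restricted to $\overline{\Omega}$, and sequences in the image lie in $c_{0}$ by the Denjoy--Carleman-type asymptotics of $M_{n}$. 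The map $G\circ J^{-1}$ is therefore a continuous linear map on the closed subspace $J(\mathcal{D}^{M_{p},L}_{K})$, valued in $E$.

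Next, extend $G\circ J^{-1}$ by the $E$-valued Hahn--Banach theorem (valid since $E$ is sequentially complete locally convex) to a continuous linear operator $\widetilde{G}:c_{0}(\mathbb{N}_{0};C(\overline{\Omega}))\to E$. Identifying the dual: every such $\widetilde{G}$ is given by a sequence of $E$-valued Radon measures $(\mu_{n})_{n\geq 0}$ on $\overline{\Omega}$, summable in an appropriate sense, so that $\widetilde{G}((\psi_{n})_{n})=\sum_{n=0}^{\infty}\int_{\overline{\Omega}}\psi_{n}\,d\mu_{n}$. Smoothing each $\mu_{n}$ to a continuous function $g_{n}:\overline{\Omega}\to E$ (at the cost of one extra derivative, absorbed into the index) and setting $f_{n}:=(-1)^{n}M_{n}^{-1}L^{-n}g_{n}$ yields $G_{|\Omega}=\sum_{n=0}^{\infty}D^{n}f_{n}$ after undoing the scaling, with the set $\{M_{n}L^{n}f_{n}(t):t\in\overline{\Omega},\,n\in\mathbb{N}\}$ bounded in $E$. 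The main obstacle here is the vector-valued duality: one must verify that the componentwise Riesz representation really produces a single $E$-valued summable representation, which uses the boundedness given by $V$ together with the sequential completeness of $E$ to ensure convergence of the tail of the series against each test function. In the Roumieu case, the argument is applied for every $L>0$ individually.

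For part (ii), assume $(M.3)$, which allows, for any sequence of positive numbers of $\ast$-type growth, the construction of an ultradifferential operator $P(D)=\sum_{p=0}^{\infty}a_{p}D^{p}$ of $\ast$-class whose coefficients majorize that sequence (the classical parametrix construction due to Komatsu). Apply this with the growth of $(f_{n})$ given by part (i): choose $(a_{p})$ of $\ast$-class so that $h_{n}:=f_{n}/a_{n}$ satisfies $\sum_{n=0}^{\infty}p(h_{n}(t))<\infty$ uniformly on $\overline{\Omega}$ for each continuous seminorm $p$. Then $f:=\sum_{n=0}^{\infty}h_{n}$ is a well-defined continuous function $\overline{\Omega}\to E$, and a termwise comparison of actions on test functions shows
\[
G_{|\Omega}=\sum_{n=0}^{\infty}D^{n}f_{n}=\sum_{n=0}^{\infty}a_{n}D^{n}f=P(D)f.
\]
The technical obstacle lies precisely in this rearrangement: both the convergence of $\sum h_{n}$ and the passage $\sum D^{n}(a_{n}h_{n})=P(D)(\sum h_{n})$ must be justified at the level of pairings with $\mathcal{D}^{\ast}$, which is exactly what $(M.3)$ guarantees through the continuity of $P(D):\mathcal{D}'^{\ast}(E)\to \mathcal{D}'^{\ast}(E)$ already established earlier in the paper.
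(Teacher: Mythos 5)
Your proposal cannot be checked against an internal argument, because the paper does not prove this statement at all: Theorem \ref{1.3.1.4} is imported verbatim from H. Komatsu \cite[Theorems 4.6 and 4.7]{k82}, with only a citation. Judged on its own merits, your part (i) follows the classical scalar scheme (embed $\mathcal{D}^{M_{p},h}_{K}$ into a $c_{0}$-sum of spaces $C(\overline{\Omega})$ and dualize), but the two steps that carry all of the vector-valued content are false as stated. There is no ``$E$-valued Hahn--Banach theorem'': continuous linear operators into a Banach or locally convex space do not extend from closed subspaces in general, and sequential completeness of $E$ is irrelevant --- by Phillips' theorem $c_{0}$ is uncomplemented in $\ell^{\infty}$, so already for $E=c_{0}$ the identity admits no continuous extension $\ell^{\infty}\to c_{0}$, and the claimed extension of $G\circ J^{-1}$ to all of $c_{0}(\mathbb{N}_{0};C(\overline{\Omega}))$ fails. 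Moreover, even where an extension exists, an operator on a $C(K)$-type space is represented (Bartle--Dunford--Schwartz) by a finitely additive measure with values in $E^{\ast\ast}$ of bounded semivariation; one gets an $E$-valued countably additive Radon measure only under weak compactness, which you never establish, so the ``componentwise Riesz representation'' does not yield the $E$-valued $\mu_{n}$. (A smaller slip: in the Beurling case local boundedness means that for a single $h$ the image of the $h$-ball is bounded, i.e.\ \emph{every} continuous seminorm $p$ satisfies $p(G(\varphi))\leq C_{p}\|\varphi\|_{M_{p},h,K}$; fixing one seminorm $p$ as you do does not express this.) The standard repair, and the point where sequential completeness genuinely enters, is to factor $G$ through the Banach space $E_{B}$ generated by the bounded set $B=G(V)$ (a Banach disc) and to construct the $f_{n}$ directly from $G$ itself, e.g.\ via iterated primitives or convolution with kernels, never extending a vector-valued operator.

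Part (ii) contains a fatal algebraic error: from $f_{n}=a_{n}h_{n}$ you infer $\sum_{n}D^{n}(a_{n}h_{n})=P(D)\sum_{m}h_{m}$, but the right-hand side equals $\sum_{n}\sum_{m}a_{n}D^{n}h_{m}$, of which your left-hand side is only the diagonal; termwise division by $a_{n}$ cannot convert $\sum_{n}D^{n}f_{n}$ into $P(D)f$. The continuity of $P(D)$ on $\mathcal{D}'^{\ast}(E)$, which the paper's preliminaries derive from (M.2), not (M.3), is irrelevant to this rearrangement. The actual role of (M.3) is different: it guarantees an elementary solution (parametrix), namely an ultradifferential operator $P(D)$ of $\ast$-class, an ultradifferentiable function $u$ with small support and a smooth remainder $w$ with $P(D)u=\delta+w$; one then writes, on $\Omega$, $G=P(D)(\check{u}\ast G)-\check{w}\ast G$, where both convolutions are continuous $E$-valued functions precisely because $G$ is locally bounded, and finally absorbs the remainder to reach the form $P(D)f$. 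Without an argument of this parametrix type, (ii) does not follow from (i) by any diagonal manipulation, so both halves of your proposal have genuine gaps.
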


%We need the following characterization of vector-valued ultradistributions supported by a point.

%{ Theorem 1.3.1.5}
\begin{thm}\label{delta}
(\cite[Theorem 4.8]{k82}) Suppose that $(M_p)$ additionally satisfies \emph{(M.3)} as well as that $G\in\mathcal{D}'^*(E)$
and \emph{supp}$(G)\subseteq\{0\}$.
Then there exists a sequence $(x_n)_{n\in {{\mathbb N}_{0}}}$ in $E$ such that $G(\varphi)=\sum_{n=0}^{\infty}\delta^{(n)}(\varphi)x_n$,
$\varphi\in\mathcal{D}^*$
and there exists $L>0$ in the Beurling case,
resp., for every $L>0$ in the Roumieu case,
such that the set $\{M_{n}L^{n}x_n :  n\in\mathbb{N}\}$ is bounded in $E.$
\end{thm}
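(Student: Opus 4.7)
The plan is to exploit the structural decomposition provided by Theorem~\ref{1.3.1.4} in order to isolate the contribution of $G$ at the origin as a series of $\delta$-derivatives. Since $\operatorname{supp}(G)\subseteq\{0\}$, the ultradistribution $G$ is compactly supported; testing against functions of the form $\chi\psi$ with a cutoff $\chi\in\mathcal{D}^{\ast}$ equal to $1$ near $0$, we may regard $G$ as a continuous $E$-valued linear map on $\mathcal{E}^{\ast}$, so in particular $G$ is locally bounded. Applying Theorem~\ref{1.3.1.4}(i) on a symmetric open neighborhood $\Omega=(-\delta,\delta)$ of $0$, we obtain a representation
$$
G_{|\Omega}=\sum_{n=0}^{\infty}D^{n}f_{n},
$$
where $f_{n}:\overline{\Omega}\to E$ is continuous for each $n$ and the set $\{M_{n}L^{n}f_{n}(t):t\in\overline{\Omega},\,n\in\mathbb{N}\}$ is bounded in $E$, with the standard Beurling/Roumieu quantification on $L>0$.

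The next step is to use the support condition to reduce the $(f_{n})$ to their point-mass content. For $\varphi\in\mathcal{D}^{\ast}_{(0,\delta)}$, the identity $G(\varphi)=0$ yields
$$
\sum_{n=0}^{\infty}(-1)^{n}\int_{0}^{\delta}\varphi^{(n)}(t)\,f_{n}(t)\,dt=0,
$$
and analogously on $(-\delta,0)$. Because the coefficients $f_{n}$ are tightly controlled by the Komatsu bound and (M.3) is assumed, the sum converges absolutely in the relevant ultradistribution topology. Truncating and applying Lemma~\ref{polinomi} to a carefully chosen rearrangement — in effect, an ultradifferentiable analogue of successive antidifferentiation — one shows that within the equivalence class of the representation $\sum_{n}D^{n}f_{n}$ one may replace $(f_{n})$ by $(\tilde{f}_{n})$ such that each $\tilde{f}_{n}$ is supported at $\{0\}$ while still obeying the Komatsu growth. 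Concretely, each $\tilde{f}_{n}$ captures only the constant part of $f_{n}$ at $0$, and $D^{n}\tilde{f}_{n}$ manifests itself as a linear combination of $\delta^{(k)}$ for $k\leq n$.

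After regrouping by the order of the $\delta$-derivatives, the expansion takes the form $G(\varphi)=\sum_{n=0}^{\infty}\delta^{(n)}(\varphi)\,x_{n}$, and the Komatsu-type boundedness of $\{M_{n}L^{n}x_{n}:n\in\mathbb{N}\}$ in $E$ transfers from the corresponding bound on $(f_{n})$ by a standard rearrangement using (M.1) (or (M.2)). The main obstacle is the middle step: converting the one-sided vanishing of the infinite series $\sum_{n}(-1)^{n}\int\varphi^{(n)}(t)f_{n}(t)\,dt$ into a statement about the individual $f_{n}$ is the delicate technical core, and it is precisely there that the additional hypothesis (M.3) on $(M_{p})$ comes into play, as it supplies enough ultradifferential-operator machinery (parametrices, asymptotic completeness) to make the truncation and passage-to-the-limit argument rigorous.
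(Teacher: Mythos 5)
A preliminary remark: the paper does not prove Theorem \ref{delta} at all --- it is imported verbatim from \cite[Theorem 4.8]{k82}, so the benchmark is Komatsu's argument. Measured against that (or any complete argument), your proposal has two genuine gaps. First, the opening reduction is invalid: from the continuity of $G$ (or of its extension to $\mathcal{E}^{\ast}$, which the compact support does give you) you infer that $G$ is locally bounded, but for maps into a non-normable locally convex space continuity does \emph{not} imply that some neighbourhood of zero in $\mathcal{D}^{\ast}_{K}$ is mapped into a bounded subset of $E$: continuity yields, for each $p\in\circledast$, an estimate $p(G(\varphi))\leq C_{p}\|\varphi\|_{M_{p},h_{p},K}$ with $h_{p}$ depending on $p$, whereas local boundedness is exactly the uniformity of $h$ in $p$. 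This is why the paper records that (local) boundedness is automatic only for metrizable or (DF) range spaces, and why Theorem \ref{1.3.1.4} carries it as an explicit hypothesis; your appeal to that theorem is therefore unjustified for a general SCLCS $E$, and the uniformity cannot be finessed, since it is precisely what later produces the single $L$ in the Beurling case. Second --- and fatally --- the step you yourself call the ``delicate technical core'' is not an argument. From $\sum_{n}(-1)^{n}\int_{0}^{\delta}\varphi^{(n)}(t)f_{n}(t)\,dt=0$ for all $\varphi\in\mathcal{D}^{\ast}_{(0,\delta)}$ nothing can be extracted termwise: Lemma \ref{polinomi} concerns a single fixed $n$; the representation $\sum_{n}D^{n}f_{n}$ is grossly non-unique; and truncating the series perturbs $G$ by a tail that is again of infinite order, so no finite-order lemma applies to it. Moreover, the asserted replacements $\tilde{f}_{n}$ --- continuous functions ``supported at $\{0\}$'' --- are identically zero, so $D^{n}\tilde{f}_{n}=0$ and cannot ``manifest as a linear combination of $\delta^{(k)}$''; what you presumably intend (functions with one-sided polynomial limits jumping at $0$, à la Lemma \ref{polinomi}) is exactly what the infinite-sum obstruction prevents you from establishing. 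Invoking ``(M.3) parametrices, asymptotic completeness'' names no construction.

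The known proof runs along a different axis and avoids the structure theorem entirely. Using non-quasianalyticity (available under (M.3)) one builds cutoffs $\chi_{\varepsilon}\in\mathcal{D}^{\ast}$ with $\chi_{\varepsilon}=1$ near $0$, $\mathrm{supp}(\chi_{\varepsilon})\subseteq[-2\varepsilon,2\varepsilon]$, and derivative bounds of class $\ast$ controlled in $\varepsilon$; since $\mathrm{supp}(G)\subseteq\{0\}$ one has $G(\varphi)=G(\chi_{\varepsilon}\varphi)$, and for $\varphi$ \emph{flat} at $0$ (all derivatives vanishing there) a Taylor-remainder/Leibniz estimate --- this is where (M.2) and (M.3) genuinely enter --- shows $\|\chi_{\varepsilon}\varphi\|_{M_{p},h,K}\to 0$ as $\varepsilon \to 0$, whence $G$ annihilates flat functions. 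Consequently $G$ factors through the jet map $\varphi\mapsto(\varphi^{(n)}(0))_{n}$; one defines $x_{n}$ by pairing $G$ with a cutoff multiple of $(-t)^{n}/n!$, the expansion $G(\varphi)=\sum_{n=0}^{\infty}\delta^{(n)}(\varphi)x_{n}$ follows, and the bound on $\{M_{n}L^{n}x_{n}\}$ falls out of the locally bounded continuity estimate evaluated on these explicit test functions. To repair your proposal you would have to (i) assume, or honestly prove, local boundedness instead of deriving it from continuity, and (ii) replace the termwise reduction of the $f_{n}$ by the flat-function annihilation argument just sketched.
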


The characterization of vector-valued distributions supported by a point has been studied by R. Shiraishi, Y. Hirata \cite{1964} and T. Ushijima \cite{ush1}. Their results can be briefly described as follows: Suppose that $G\in\mathcal{D}'(E)$ and supp$(G)\subseteq\{0\}$. If there exists a norm $\| \cdot \|$ on $E$ satisfying that $\|x\| \leq cp(x),$ $x\in E$ for some $p\in \circledast$ and $c>0,$ or if $E$ satisfies the conditions ($\ast$)-($\ast$)' stated on page 82 of \cite{1964}, then there exist $n\in\mathbb{N}$ and $x_i\in E$, $0\leq i\leq n$
such that $G(\varphi)=\sum_{i=0}^n\delta^{(i)}(\varphi)x_i$, $\varphi\in\mathcal{D}$. If the space $E$ satisfies the property that any vector-valued distribution $G\in\mathcal{D}'(E)$ with supp$(G)\subseteq\{0\}$ can be represented as a finite sum of
vector-valued distributions like $\delta^{(i)} \otimes x_{i}$ (cf. the next paragraph for the notion), then we
shall simply say that $E$ is admissible.

We need the following basic facts about the Laplace transform of (ultra-)distributions (cf. T. K\=omura
\cite{komura}). Let
$$
\hat{\varphi}(\lambda):=\frac{1}{2\pi} \int \limits^{\infty}_{-\infty}e^{\lambda t}\varphi(t)\, dt,\quad \lambda \in {\mathbb C},\ \varphi \in {\mathcal D} \ \ (\varphi \in {\mathcal D}^{\ast}).
$$
Set ${\mathbf D}:=\{ \hat{\varphi} : \varphi \in {\mathcal D}\}$ and ${\mathbf D}^{\ast}:=\{ \hat{\varphi} : \varphi \in {\mathcal D}^{\ast}\},$ $\hat{\varphi} +\hat{\psi}:=\widehat{\varphi +\psi},$ $\lambda \hat{\varphi}:=\hat{\lambda \varphi}$ ($\lambda \in {\mathbb C};$ $\varphi,$ $\psi \in {\mathcal D}$ (${\mathcal D}^{\ast}$)).
Then the mapping $\hat \ : {\mathcal D} \rightarrow {\mathbf D}$ ($\hat \ : {\mathcal D}^{\ast} \rightarrow {\mathbf D}^{\ast}$)
is a linear isomorphism between the vector spaces ${\mathcal D}$ and ${\mathbf D}$ (${\mathcal D}^{\ast}$ and ${\mathbf D}^{\ast}$). It is said that a subset ${\mathbf T}=\{\hat{\varphi} : \varphi \in {\mathcal T}\}$ of ${\mathbf D}$ (${\mathbf D}^{\ast}$)
is open iff the set ${\mathcal T}$ is open in ${\mathcal D}$ (${\mathcal D}^{\ast}$).
Then the mapping $\hat{\cdot}$ becomes a linear topological homeomorphism between the Hausdorff locally convex spaces ${\mathcal D}$ (${\mathcal D}^{\ast}$) and ${\mathbf D}$ (${\mathbf D}^{\ast}$). Set ${\mathbf D}'(E):=L({\mathbf D},E)$ and ${\mathbf D}'^{*}(E):=L({\mathbf D}^{\ast},E).$ There exists a linear topological homeomorphism $\hat{\cdot} : {\mathcal D}'(E) \rightarrow {\mathbf D}'(E)$
($\hat{\cdot} : {\mathcal D}'^{\ast}(E) \rightarrow {\mathbf D}'^{\ast}(E)$) defined by $\hat{G}(\hat{\varphi}):=G(\varphi),$ $\varphi \in {\mathcal D}$ ($\hat{G}(\hat{\varphi}):=G(\varphi),$ $\varphi \in {\mathcal D}^{\ast}$) for all
$G\in {\mathcal D}'(E)$ ($G\in{\mathcal D}'^{\ast}(E)$).
The functional $\hat{G}$ is called the generalized Laplace transform of $G.$ Set, for every non-empty subset $\Omega$ of ${\mathbb R},$
${\mathbf D}'_{\Omega}(E):=\{\hat{G} : G\in {\mathcal D}'_{\Omega}(E)\}$ (${\mathbf D}'^{*}_{\Omega}(E):=\{\hat{G} : G\in {\mathcal D}'^{*}_{\Omega}(E)\}$);
${\mathbf D}'_{0}(E)\equiv {\mathbf D}'_{[0,\infty)}(E)$ (${\mathbf D}'^{*}_{0}(E) \equiv {\mathbf D}'^{*}_{[0,\infty)}(E)$). Then ${\mathbf D}'_{0}(E)$ (${\mathbf D}'^{*}_{0}(E)$) is a closed subspace of ${\mathbf D}'(E)$ (${\mathbf D}'^{*}(E)$) and it is topologically homeomorphic to
${\mathcal D}'_{0}(E)$ (${\mathcal D}'^{*}_{0}(E)$)
by the mapping $\hat{\cdot}.$ If $F\in {\mathcal D}'$ ($F\in {\mathcal D}'^{\ast}$) and $x\in E$, then we define $F \otimes x \in {\mathcal D}'(E)$ ($F \otimes x \in {\mathcal D}'^{\ast}(E)$) and $\hat{F} \otimes x \in {\mathbf D}'(E)$ ($\hat{F} \otimes x \in {\mathbf D}'^{\ast}(E)$)
by
$\langle F \otimes x ,\varphi \rangle:=\langle F , \varphi \rangle x,$
$\varphi \in {\mathcal D}$ ($\varphi \in {\mathcal D}^{\ast}$)
and
$\langle \hat{F} \otimes x ,\hat{\varphi} \rangle :=\langle F, \varphi \rangle x,$ $\varphi \in {\mathcal D}$ ($\varphi \in {\mathcal D}^{\ast}$).

If $T \in {\mathcal E}'(E)$ ($T \in {\mathcal E}'^{\ast}(E)$), then we define the Laplace transform of $T$ by
$$
\hat{T}(\lambda):=\bigl \langle T(x), e^{\lambda x} \bigr \rangle,\quad \lambda \in {\mathbb C}.
$$
%Here ${\mathcal F}{\mathcal L}(T)$ denotes the Fourier-Laplace transform of $T.$ In the following Paley-Wiener type theorem, we shall precisely characterize
%the class of those $E$-valued entire functions $f(\lambda)$ for which there exists a distribution $T\in {\mathcal E}'_{[-k,k]}(E)$ such that $\hat{T}(\lambda)=f(\lambda),$ $\lambda \in {\mathbb C}.$

\begin{thm}\label{p-w-dis} (\cite{komura})
Let $k>0.$ Then
an $E$-valued entire function $f(\lambda)$ is the Laplace transform of a distribution $T\in {\mathcal D}'_{[-k,k]}(E)$
iff for every $x^{\ast}\in E^{\ast}$ there exist $n\in {\mathbb N}$ and $c>0$ such that
$$
\bigl | \bigl \langle x^{\ast}, f(\lambda) \bigr \rangle \bigr|\leq c(1+|\lambda|)^{n}e^{k|\Re \lambda|},\quad \lambda \in {\mathbb C}.
$$
\end{thm}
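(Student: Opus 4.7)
The necessary direction reduces directly to the classical scalar Paley--Wiener--Schwartz theorem. Since $\mathrm{supp}(T)\subseteq [-k,k]$ is compact, $T$ extends canonically to an element of $\mathcal{E}'(E)$, and for each $x^{\ast}\in E^{\ast}$ the composition $x^{\ast}\circ T$ lies in $\mathcal{E}'_{[-k,k]}$. Then $\langle x^{\ast}, f(\lambda)\rangle=\langle x^{\ast}, \hat{T}(\lambda)\rangle=(x^{\ast}\circ T)(e^{\lambda\cdot})$, and the classical bound for compactly supported scalar distributions yields exactly the required polynomial-times-exponential estimate, with $n$ tied to the order of $x^{\ast}\circ T$.

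For the sufficient direction, the plan is to reconstruct $T$ by inverse Fourier--Laplace transformation. The scalar Paley--Wiener--Schwartz theorem, applied pointwise in $x^{\ast}\in E^{\ast}$, produces a unique family $T_{x^{\ast}}\in \mathcal{D}'_{[-k,k]}$ with $\widehat{T_{x^{\ast}}}(\lambda)=\langle x^{\ast}, f(\lambda)\rangle$, linear in $x^{\ast}$. For $\varphi\in \mathcal{D}$ the natural candidate is the $E$-valued improper Riemann integral
$$
T(\varphi):=\int_{-\infty}^{\infty}f(i\xi)\,\hat{\varphi}(-i\xi)\,d\xi,
$$
since pairing with $x^{\ast}$ formally reproduces the scalar Fourier inversion of $\widehat{T_{x^{\ast}}}$. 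Once this integral is known to lie in $E$ and depend continuously on $\varphi\in \mathcal{D}$, the remaining verifications are essentially mechanical: $\mathrm{supp}(T)\subseteq [-k,k]$ follows by shifting the contour into the half-plane where $\hat{\varphi}$ decays exponentially (for $\varphi\in \mathcal{D}_{(k,\infty)}$ push $\Re\lambda\to+\infty$; symmetrically for $\varphi\in \mathcal{D}_{(-\infty,-k)}$), while $\hat{T}=f$ follows by pairing with $x^{\ast}$ and invoking uniqueness of $T_{x^{\ast}}$.

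The main obstacle is convergence of this integral in $E$. The truncated integrals $I_{R}(\varphi):=\int_{-R}^{R}f(i\xi)\hat{\varphi}(-i\xi)\,d\xi$ exist in $E$ as Riemann integrals of a continuous $E$-valued integrand on a compact interval, by sequential completeness. To show that $(I_{R}(\varphi))$ is Cauchy in every seminorm $p\in \circledast$ one needs a uniform polynomial bound $p(f(i\xi))\le c_{p}(1+|\xi|)^{n_{p}}$, whereas the hypothesis only supplies weak bounds with $x^{\ast}$-dependent exponent. The plan to bridge this gap is a Banach--Steinhaus / Baire-category argument on the equicontinuous polar $V_{p}^{\circ}\subseteq E^{\ast}$, which is weak-$\ast$ compact by Alaoglu and hence Baire: writing $V_{p}^{\circ}=\bigcup_{N\in\mathbb{N}}(F_{N}\cap V_{p}^{\circ})$ with
$$
F_{N}:=\Bigl\{x^{\ast}\in E^{\ast} : \bigl|\langle x^{\ast}, f(\lambda)\rangle\bigr|\le N(1+|\lambda|)^{N}e^{k|\Re\lambda|}\text{ for all } \lambda\in\mathbb{C}\Bigr\},
$$
each $F_{N}$ weak-$\ast$ closed and absolutely convex, delivers a single $N=N(p)$ such that the weak bound holds uniformly for $x^{\ast}\in V_{p}^{\circ}$. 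Taking the supremum over $x^{\ast}\in V_{p}^{\circ}$ and invoking the Hahn--Banach formula $p(y)=\sup_{x^{\ast}\in V_{p}^{\circ}}|\langle x^{\ast},y\rangle|$ then yields the strong polynomial bound. Combined with the super-polynomial decay of $\hat{\varphi}$ on $i\mathbb{R}$ (the Paley--Wiener estimate for $\mathcal{D}$), this renders the tails of $I_{R}(\varphi)$ negligible in every seminorm, and sequential completeness of $E$ delivers $T(\varphi)\in E$.
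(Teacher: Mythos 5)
Your proposal is correct, and it is instructive to see how it sits relative to the paper. The paper states Theorem \ref{p-w-dis} with a citation to K\=omura and gives no proof of it, but it does prove the ultradistribution analogue (Theorem \ref{EvPWU}) in full, and the crux there is exactly the step you isolate: upgrading the weak estimates, whose order $n$ and constant $c$ depend on $x^{\ast}$, to a strong estimate $p(f(\lambda))\leq c_{p}(1+|\lambda|)^{n_{p}}e^{k|\Re \lambda|}$ for every continuous seminorm $p$, after which the actual construction of $T$ is delegated to K\=omura (you carry it out explicitly via the inverse Fourier--Laplace integral, which is a welcome addition, and your necessity argument via $x^{\ast}\circ T$ is the same as the paper's). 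Where you genuinely differ is the mechanism of the weak-to-strong upgrade: the paper argues by contradiction with a hand-built gliding hump, choosing points $\lambda_{N}$ where the seminorm bound fails, functionals $x_{N}^{\ast}$ in the polar $U^{\circ}$ witnessing this, and weights $\varepsilon_{N}\leq 2^{-N}$, then summing $x_{\infty}^{\ast}=\sum_{n}\varepsilon_{n}x_{n}^{\ast}\in U^{\circ}$ (using that $U^{\circ}$ is convex, balanced and weak-$\ast$ compact) to contradict the weak bound at $x_{\infty}^{\ast}$; you instead apply Baire category to the weak-$\ast$ compact (Alaoglu--Bourbaki) polar $V_{p}^{\circ}$ covered by the weak-$\ast$ closed absolutely convex sets $F_{N}$. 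Both are sound; your route is shorter and follows the classical uniform-boundedness pattern, while the paper's explicit construction adapts more readily to the Roumieu setting of Theorem \ref{EvPWU}, where the majorants $M_{r_{p}}(\lambda)$ vary with the functional through the sequence $(r_{p})$ and a countable Baire decomposition is not immediately available. Two compressed steps you should still write out: Baire only yields a relative interior point of some $F_{N}\cap V_{p}^{\circ}$, and you need the routine absorption argument (absolute convexity of $F_{N}$ plus equicontinuity of $V_{p}^{\circ}$, so that $t(y^{\ast}-x_{0}^{\ast})$ lies in the relevant weak-$\ast$ neighborhood uniformly in $y^{\ast}\in V_{p}^{\circ}$) to conclude $V_{p}^{\circ}\subseteq (2/t)F_{N}$, whence the strong bound via $p(y)=\sup_{x^{\ast}\in V_{p}^{\circ}}\left|\langle x^{\ast},y\rangle\right|$; and note that your $F_{N}$ give the strong bound on all of ${\mathbb C}$, which you need off the imaginary axis for the contour shift --- although the support statement also follows at once from $\langle x^{\ast},T(\varphi)\rangle=\langle T_{x^{\ast}},\varphi\rangle$ and the scalar theorem, making that shift optional.
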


The proof of following Paley-Wiener theorem for $E$-valued ultradistributions with compact support can be deduced with the help of the corresponding assertion for scalar-valued ultradistributions \cite[Theorem 1.1]{k911} and the idea from \cite{komura}. This result can be viewed of some independent interest;
we will include all relevant details of proof for the sake of completeness.

\begin{thm}\label{EvPWU} (\emph{The Paley-Wiener theorem for $E$-valued ultradistributions}) Let \emph{(M.1)}, \emph{(M.2)} and \emph{(M.3)} hold, let ${\mathcal R}$ denote the set consisting of all positive monotonically increasing sequences, and let
$$
M_{r_p}(\rho)=\sup_{p\in {\mathbb N}}\Biggl\{\ln \frac{{\rho}^p}{M_p\prod_{i=1}^{p}r_i}\Biggr\},\quad \rho>0.
$$
An $E$-valued entire function $\hat{u}(\lambda)$ is the generalized Laplace transform of an $E$-valued ultradistribution $u$ of $\ast$-class with support contained in a non-empty compact subset $K\subseteq {\mathbb R}$ iff for every $x^{\ast}\in E^{\ast}$ there exist $h>0$ and $c>0$, in Beurling case, resp., there exist $(r_p)\in{\mathcal R}$ and $c_{r_p}>0,$ in Roumieu case, such that
\begin{align}
\notag \bigl |  & \bigl  \langle x^{\ast}, \hat{u}(\lambda)  \bigr  \rangle \bigr|\leq c e^{M(\lambda/h)+H_K(i\lambda)},\quad \lambda\in{\mathbb C},\mbox{ resp.,}
\\ \label{E2} &
\bigl|  \bigl  \langle x^{\ast}, \hat{u}(\lambda)  \bigr  \rangle \bigr|\leq c_{r_p} e^{M_{r_p}(\lambda)+H_K(i\lambda)},\quad \lambda\in{\mathbb C}.
\end{align}
Here $H_{K}(\lambda):=\sup_{x\in K} \Im (x\lambda),$ $\lambda \in {\mathbb C}.$
\end{thm}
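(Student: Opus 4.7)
The plan is to reduce to the scalar Paley--Wiener theorem for ultradistributions \cite{k911} by testing against elements $x^{\ast}\in E^{\ast}$, following the strategy of K\=omura \cite{komura} used to prove Theorem \ref{p-w-dis} in the distribution case, and exploiting the topological isomorphism $\hat{\cdot}:\mathcal{D}'^{\ast}(E)\to\mathbf{D}'^{\ast}(E)$ recorded earlier in the excerpt.

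For the necessity direction, suppose $u$ is an $E$-valued ultradistribution of $\ast$-class with $\mathrm{supp}(u)\subseteq K$, so that $\hat{u}(\lambda)=\langle u(x),e^{\lambda x}\rangle$. For an arbitrary $x^{\ast}\in E^{\ast}$, the composition $u_{x^{\ast}}:=x^{\ast}\circ u$ is a scalar ultradistribution of $\ast$-class with $\mathrm{supp}(u_{x^{\ast}})\subseteq K$, and its scalar Laplace transform equals $\widehat{u_{x^{\ast}}}(\lambda)=\langle x^{\ast},\hat{u}(\lambda)\rangle$. Applying the scalar Paley--Wiener theorem to each $u_{x^{\ast}}$ immediately yields the bounds (\ref{E2}), with the constants naturally depending on $x^{\ast}$.

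For the sufficiency direction, suppose $\hat{u}:\mathbb{C}\to E$ is entire and satisfies (\ref{E2}). For every $x^{\ast}\in E^{\ast}$ the scalar function $\lambda\mapsto\langle x^{\ast},\hat{u}(\lambda)\rangle$ meets the scalar PW bound, so the scalar theorem produces a unique scalar ultradistribution $u_{x^{\ast}}$ of $\ast$-class with $\mathrm{supp}(u_{x^{\ast}})\subseteq K$ and $\widehat{u_{x^{\ast}}}=\langle x^{\ast},\hat{u}\rangle$; the assignment $x^{\ast}\mapsto u_{x^{\ast}}$ is linear. Following \cite{komura}, one assembles these scalar objects into an $E$-valued ultradistribution via a Fourier--Laplace inversion: for $\varphi\in\mathcal{D}^{\ast}$ set
$$
u(\varphi):=\frac{1}{2\pi i}\int_{\Gamma}\hat{u}(\lambda)\,\hat{\varphi}(-\lambda)\,d\lambda,
$$
where $\Gamma$ is a suitable vertical line and the integral is interpreted as an $E$-valued Riemann integral. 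Since $\varphi$ has compact support, $\hat{\varphi}(-\lambda)$ exhibits $\ast$-class decay along $\Gamma$ that dominates the growth of $\hat{u}(\lambda)$ dictated by (\ref{E2}), so the integrand is weakly integrable. Assuming convergence of the integral in $E$, one then checks: $\langle x^{\ast},u(\varphi)\rangle=u_{x^{\ast}}(\varphi)$ from the scalar inversion formula; continuity $u:\mathcal{D}^{\ast}\to E$ from uniform-in-$\varphi$ estimates on bounded sets of $\mathcal{D}^{\ast}$; $\mathrm{supp}(u)\subseteq K$ by shifting $\Gamma$ and using the factor $H_{K}(i\lambda)$ to annihilate the integrand when $\mathrm{supp}(\varphi)\cap K=\emptyset$; and $\hat{u}(\lambda)=\langle u(x),e^{\lambda x}\rangle$ by construction.

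The principal obstacle is ensuring that the Riemann sums defining $u(\varphi)$ genuinely converge in $E$ rather than merely weakly in the algebraic dual of $E^{\ast}$. The bounds (\ref{E2}) are only pointwise in $x^{\ast}$: the constants $c,h$ (resp.\ $c_{r_{p}}$ and the sequence $(r_{p})$) depend on $x^{\ast}$, so a direct strong estimate on $\hat{u}(\lambda)$ is unavailable. One therefore upgrades the weak bound to one uniform on equicontinuous subsets of $E^{\ast}$ via the Banach--Steinhaus principle in the SCLCS setting; in the Roumieu case an additional diagonal argument is required to replace the $x^{\ast}$-dependent $(r_{p})$ by a single sequence valid on the relevant equicontinuous set. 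Combined with the $\ast$-class decay of $\hat{\varphi}(-\lambda)$ along $\Gamma$, this produces a Cauchy net of Riemann sums in $E$, and sequential completeness of $E$ then delivers $u(\varphi)\in E$, completing the construction.
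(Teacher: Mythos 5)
Your necessity argument and your overall strategy for sufficiency (reduce to the scalar Paley--Wiener theorem \cite[Theorem 1.1]{k911} by upgrading the pointwise-in-$x^{\ast}$ bound to a strong estimate, then conclude by a K\=omura-type construction) match the paper's plan; indeed the paper reduces sufficiency, via \cite[Example, p.~267]{komura} and \cite[Lemma 3.3]{k91}, exactly to the seminorm estimate (\ref{ponom}), so the inversion integral you sketch is subsumed by those citations and need not be carried out. The genuine gap is in the one step that constitutes the actual content of the theorem: the weak-to-strong upgrade. Your appeal to the Banach--Steinhaus principle fails here, because (\ref{E2}) is not a weak boundedness statement for any fixed family of vectors: not only the constant $c$ but also the weight itself --- the parameter $h$ in $e^{M(\lambda/h)}$, resp.\ the sequence $(r_{p})$ in $e^{M_{r_{p}}(\lambda)}$ --- varies with $x^{\ast}$. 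For a fixed normalization the set $\{e^{-M(\lambda/h)-H_{K}(i\lambda)}\hat{u}(\lambda):\lambda\in\mathbb{C}\}$ need not be weakly bounded, so neither Mackey's theorem nor any uniform boundedness principle applies. Your proposed repair, ``an additional diagonal argument'' in the Roumieu case, is also not available off the shelf: the relevant equicontinuous set $U^{\circ}$ is uncountable, the parameter set ${\mathcal R}$ is uncountable, and ${\mathcal R}$ is not countably directed in the required sense (pointwise minima of infinitely many monotonically increasing sequences need not remain in ${\mathcal R}$, nor yield a useful common bound), so no diagonalization over the $x^{\ast}$-dependent sequences $(r_{p})$ is straightforward.

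What the paper does instead, and what your proof is missing, is a gliding-hump construction proving (\ref{ponom}) by contradiction. Assuming (\ref{E2}) holds but (\ref{ponom}) fails for some continuous seminorm $q$ with unit ball $U$, one writes $q(x)=\sup_{x^{\ast}\in U^{\circ}}|\langle x^{\ast},x\rangle|$ and recursively chooses: constants $c_{r_{N}}$ and $r_{N}$ making the previously built functional satisfy the bound (\ref{zvpar}); small weights $\varepsilon_{N}\leq 2^{-N}$ controlling the contribution of the earlier stages; points $\lambda_{N}$ where $q(\hat{u}(\lambda_{N}))$ exceeds $(3C_{r_{N}}/\varepsilon_{N})e^{M_{r_{N}}(\lambda_{N})+H_{K}(i\lambda_{N})}$; and functionals $x_{N}^{\ast}\in U^{\circ}$ nearly attaining this supremum. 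The series $x_{\infty}^{\ast}=\sum_{n}\varepsilon_{n}x_{n}^{\ast}$ then lies in $U^{\circ}$ (since the polar is convex, balanced and weak-$\ast$ compact, and $\sum_{n}\varepsilon_{n}\leq 1$), and the pairing $|\langle x_{\infty}^{\ast},\hat{u}(\lambda_{N})\rangle|$ outgrows every single Roumieu bound along $(\lambda_{N})$, contradicting (\ref{E2}) applied to the one functional $x_{\infty}^{\ast}$. This replacement of a uniform boundedness appeal by the construction of a single violating functional is precisely what handles the $x^{\ast}$-dependent weights; without it, or an equivalent argument, your sufficiency proof does not go through.
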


\begin{proof}
The proof of necessity follows almost immediately from the Paley-Wiener theorem for scalar-valued ultradistributions (see \cite[Theorem 1.1]{k911}). In the proof of sufficiency,
we will consider only the Roumieu case.
So, let us assume that $\hat{u}(\lambda)$ is an $E$-valued entire function satisfying (\ref{E2}). It suffices to show (see \cite[Example, p. 267]{komura} and \cite[Lemma 3.3]{k91}) that $\hat{u}(\lambda)$ satisfies: For any continuous seminorm $p$ on $E$, there exist $({r_p})\in{\mathcal R}$ and $c_{r_p}>0$ such that
\begin{equation}\label{ponom}
p\bigl(\hat{u}(\lambda)\bigr)\leq c_{r_p} e^{M_{r_p}(\lambda)+H_K(i\lambda)}, \quad \lambda\in{\mathbb C}.
\end{equation}
Let us suppose the converse, i.e., that (\ref{E2}) holds but (\ref{ponom}) does not hold. We will construct sequences $(r_n)$, $(c_n)$, $(\varepsilon_n)$, $(\lambda_n)$ and $(x_{n}^{\ast})$ satisfying certain properties. Set $\varepsilon_{1}:=1/2.$ It is clear that there exist $\lambda_1\in{\mathbb C}$, $r_1>0$ and $c_1>0$ such that, for some continuous seminorm $q$ on $E$, one has
$$
q\bigl(\hat{u}(\lambda_1)\bigr)>c_{r_1}e^{M_{r_1}(\lambda_1)+H_K(i\lambda_1)}.
$$
Observe also that $q(x)=\sup_{x^{\ast}\in U^{\circ}}|\langle x^{\ast},x\rangle|$ for all $x\in E,$
where $U=\{x\in E\, :\, q(x)\leq1\}.$ Choose $x_{1}^{\ast}\in U^{\circ}$ such that $|\langle x_{1}^{\ast},\hat{u}(\lambda)\rangle|>c_{r_1}e^{M_{r_1}(\lambda_1)+H_K(i\lambda_1)}$. Suppose that $(r_i)$, $(C_i)$, $(\varepsilon_i)$, $(\lambda_i)$ and $(x_{i}^{\ast})$ are determined for $1\leq i\leq N-1.$ Then there exist $c_{r_{N}}>c_{r_{N-1}}+1$ and $r_{N}>r_{N-1}+1$ with
\begin{equation}\label{zvpar}
\bigl| \bigl \langle x_{N-1}^{\ast},\hat{u}(\lambda) \bigr \rangle \bigr|\leq c_{r_N} e^{M_{r_N}(\lambda)+H_K(i\lambda)},\quad \lambda \in {\mathbb C}.
\end{equation}
Having in mind that the set $\{|\langle x^{\ast}, {\hat{u}}(\lambda_i)\rangle|\, :\,\, 1\leq i\leq N-1,\ x^{\ast}\in U^{\circ}\}$ is bounded, we obtain that there exists $\varepsilon_N\leq1/2^N$ such that
$$
\sup\limits_{1\leq i\leq N-1, x^{\ast}\in U^{\circ}}\bigl| \bigl \langle x^{\ast},\hat{u}(\lambda_i) \bigr \rangle \bigr |\leq\frac{1}{2^N\varepsilon_N}.
$$
Now, by the assumption, there exists $\lambda_{N} \in {\mathbb C}$ such that
$$
q\bigl(\hat{u}(\lambda_N)\bigr)>\frac{3C_{r_N}}{\varepsilon_N}e^{M_{r_N}(\lambda_N)+H_K(i\lambda_N)}.
$$
There is an $x_{N}^{\ast}\in U^{\circ}$ such that
$$
\bigl| \bigr\langle\hat{u}(\lambda_N),x_{N}^{\ast} \bigr\rangle \bigr|>\frac{3C_{r_N}}{\varepsilon_N}e^{M_{r_N}(\lambda_N)+H_K(i\lambda_N)}.
$$
Set $x_{\infty}^{\ast}:=\sum_{n=1}^{\infty}\varepsilon_n x_{n}^{\ast}$. Since $U^{\circ}$ is convex, balanced and $\sigma$-compact, $\sum_{n=1}^{\infty}\varepsilon_n\leq 1$, we have $x_{\infty}^{\ast}\in U^{\circ}$. Hence, by (\ref{zvpar}),
\begin{align*}
\Bigl| \bigl \langle\hat{u}&(\lambda_N),x_{\infty}^{\ast} \bigr \rangle\Bigr|=\Biggl|\sum\limits_{n=1}^{\infty}\bigl \langle\hat{u}(\lambda_N),\varepsilon_nx_{n}^{\ast} \bigr \rangle \Biggr|
 \\ & \geq \Bigl| \bigl \langle\hat{u}(\lambda_N),\varepsilon_Nx_{N}^{\ast} \bigr \rangle\Bigr|-\sum\limits_{n=1}^{N-1}\Bigl|  \bigl \langle\hat{u}(\lambda_N), \varepsilon_nx_{n}^{\ast} \bigr \rangle \Bigr|-\sum\limits_{n=N+1}^{\infty}\Bigl|  \bigl \langle\hat{u}(\lambda_N),\varepsilon_nx_{n}^{
\ast}  \bigr \rangle \Bigr|
\\ & > 3C_{r_N}e^{M_{r_N}(\lambda_N)+H_K(i\lambda_N)}-\sum\limits_{n=1}^{N-1}\Bigl|\varepsilon_n e^{M_{r_N}(\lambda_N)+H_K(i\lambda_N)}\Bigr |
\\ & -\sum\limits_{n=N+1}^{\infty}\frac{1}{2^n}>C_{r_N}e^{M_{r_N}(\lambda_N)+H_K(i\lambda_N)},
\end{align*}
which contradicts (\ref{E2}).
\end{proof}

For further information concerning the Paley-Wiener type theorems for ultradifferentiable functions and
infinitely differentiable functions with compact support, we refer the reader to
\cite[Section 9]{k91}, \cite{yosida} and \cite[Section 11.6]{stan}.
%The interested reader may try to prove the vector-valued analogous of these assertions (with appropriate technical modifications, Theorem \ref{p-w-dis} and Theorem \ref{EvPWU} continue to hold in the multi-dimensional case).

The spaces of tempered ultradistributions of the Beurling,
resp., the Roumieu type, are defined in \cite{pilip} as duals of the following test spaces
$$
\mathcal{S}^{(M_p)}(\mathbb{R}^{n}):=\text{projlim}_{h\to\infty}\mathcal{S}^{M_p,h}(\mathbb{R}^{n}),
\mbox{ resp., }\mathcal{S}^{\{M_p\}}(\mathbb{R}^{n}):=\text{indlim}_{h\to 0}\mathcal{S}^{M_p,h}(\mathbb{R}^{n}),
$$
where for each $h>0,$
\begin{align*}
\mathcal{S}^{M_p,h}(\mathbb{R}^{n}):=\bigl\{\phi\in C^\infty(\mathbb{R}^{n}):\|\phi\|_{M_p,h}<\infty\bigr\},
\end{align*}
\begin{align*}
\|\phi\|_{M_p,h}:=\sup\Biggl\{\frac{h^{|\alpha|+|\beta|}}{M_{|\alpha|} M_{|\beta|}}\bigl(1+|x|^2\bigr)^{\beta/2}\bigl|\phi^{(\alpha)}(x)\bigr| : x\in\mathbb{R}^{n},
\;\alpha,\;\beta\in\mathbb{N}_{0}^{n}\Biggr\}.
\end{align*}
If $n=1,$
then we also write $\mathcal{S}^{(M_p)}$ ($\mathcal{S}^{\{M_p\}}$) for $\mathcal{S}^{(M_p)}(\mathbb{R}^{n})$ ($\mathcal{S}^{\{M_p\}}(\mathbb{R}^{n})$); the common abbreviation for the both case of brackets will be $\mathcal{S}^{\ast}.$
For further information we refer to \cite{b42}-\cite{ckm}, \cite{cizi}, \cite{fat}, \cite{gr}, \cite{knjigah},
\cite{dusanka}, \cite{ku113}, \cite{me152}, \cite{patak} and \cite{pilip}.

\section{$C$-wellposedness of first order Cauchy problem in the sense of distributions and ultradistributions}

In this section, we will continue the study of T. Ushijima
\cite[Section 1-Section 2]{ush1} on the well-posedness of Cauchy problem in the spaces of abstract distributions. We note that there exist some assertions in the existing literature on abstract differential equations in locally convex spaces, like \cite[Proposition 1.2]{komura} or
\cite[Propositions 1.1, 1.3-1.4, 1.6; Theorem 2.1]{ush1}, in which the sequential completeness of the state space $E$ has not been assumed. We will not follow this general approach here.

\begin{defn}\label{C-wellposed} (cf. also \cite[Definition 2.1.4]{me152} for distribution case, with $C=I$)
Let $A$ be a closed linear operator on $E,$ let $C\in L(E)$ be injective, and let $CA\subseteq AC.$ Then it is said that the operator $A$ is $C$-wellposed for the abstract Cauchy problem $u'-Au=G$ at $t=0$ in the sense of distributions
(ultradistributions of $\ast$-class)
if for each $G\in {\mathcal D}'_{0}(E)$ ($G\in {\mathcal D}'^{\ast}_{0}(E)$) there exists a unique $U_{G}\in {\mathcal D}'_{0}(E)$ ($U_{G} \in {\mathcal D}'^{\ast}_{0}(E)$)  satisfying the following conditions:
\begin{itemize}
\item[(i)] $U_{G}(\varphi) \in D(A)$ for all $\varphi \in {\mathcal D}$ ($\varphi \in {\mathcal D}^{\ast}$),
\item[(ii)] the mapping $G \mapsto U_{G},$ $G\in {\mathcal D}'_{0}(E)$ ($G\in {\mathcal D}'^{\ast}_{0}(E)$) belongs to $L( {\mathcal D}'_{0}(E))$ ($L({\mathcal D}'^{\ast}_{0}(E))$),
\item[(iii)] $U_{G}'(\varphi)-AU_{G}(\varphi)=CG(\varphi)$ for all $\varphi \in {\mathcal D}$ ($\varphi \in {\mathcal D}^{\ast}$).
\end{itemize}
\end{defn}

\begin{defn}\label{exp C-wellposed}  (cf. also \cite[Subsection 2.1.3]{me152} for distribution case, with $C=I$)
 Let $A$ be a closed linear operator on $E$, let $C\in L(E)$ be injective, and let $CA\subseteq AC$. Then  it is said that the operator $A$ is exponentially $C$-wellposed for the abstract Cauchy problem $u'-Au=G$ at $t=0$ in sense of distributions (ultradistributions of $\ast$-class) if for each $G\in\DD'_0(E)$ ($G\in {\mathcal D}'^{\ast}_{0}(E)$) there exists a unique $U_G\in\DD'_0(E)$ ($U_{G} \in {\mathcal D}'^{\ast}_{0}(E)$) satisfying (i), (ii) and (iii) from the previous definition and the following condition:
\begin{itemize}
\item[(iv)] there exists $a\geq 0$ such that $e^{-a\cdot}U_G\in\SSS'(E)$ ($e^{-a\cdot}U_G\in  {\mathcal S}'^{\ast}(E)$).
\end{itemize}
\end{defn}

%We continue our exposition by enquiring into the basic properties of $C$-generalized resolvents of linear operators in locally convex spaces.

\subsection{$C$-generalized resolvents of linear operators}

Throughout this subsection, we assume that $X$ and $Y$ are Hausdorff locally convex spaces over the field ${\mathbb K} \in \{{\mathbb R},{\mathbb C}\}$ as well as that $A$ is a linear operator on $Y$ and the operator $C\in L(Y)$ is injective.
Let $Z$ be a non-trivial subspace of $L(X,Y)$ obeying the property that $CU\in Z$ whenever $U\in Z,$ and let ${\mathbf B}$ denote the family of all bounded subsets of $X.$ By $I_{X}$ ($I_{Y},$ $I_{Z}$) we denote the identity operator on $X$ ($Y$, $Z$). Then $Z$ is a
Hausdorff locally convex space over the field ${\mathbb K},$ and the fundamental system of seminorms which defines the topology of $Z$
is $(P_{B})_{P\in \circledast_{Y},B\in {\mathbf B}},$ where $P_{B}(T):=\sup_{x\in B}P(Tx),$ $T\in Z$ ($P\in \circledast_{Y},$ $B\in {\mathbf B}$). In \cite[Definition 4.1]{komura}, T. K\=omura has analyzed the case $X={\mathcal D}_{(-\infty ,a]},$ $Z=L({\mathcal D}_{(-\infty ,a]},Y)$ for some $a>0,$ and $C=I_{Y},$ while on pages 96-97 of \cite{ush1} T. Ushijima has analyzed the case $X={\mathcal D},$ $Z={\mathcal D}'_{0}(Y)$ and $C=I_{Y}.$

The main aim of this subsection is to provide, on the basis of ideas from \cite{komura} and \cite{ush1} (cf. also \cite[Definition 1]{vuaq}), a very general approach for introducing the notions of $C$-generalized resolvents of linear operators. Define a linear operator $A_{X,Z}$ on $Z$ by
$$
A_{X,Z}:=\bigl\{(U,V) \in Z \times Z : Ux\in D(A)\mbox{ for all }x\in X\mbox{ and }Vx=A(Ux),\ x\in X \bigr\}.
$$
Then it is checked at once that $D_{X,Z}\in L(Z)$ for any $D\in L(Y)$ satisfying that $DU\in Z$ for all $U\in Z,$ and that the operator $C_{X,Z}\in L(Z)$ is injective. Furthermore, the assumption $CA\subseteq AC$ ($C^{-1}AC=A$)
implies
$C_{X,Z}A_{X,Z}\subseteq A_{X,Z}C_{X,Z}$ ($C_{X,Z}^{-1}A_{X,Z}C_{X,Z}=A_{X,Z}$).
If the closed graph theorem holds for the mappings from $X$ into $Y,$ then $D(A_{X,Z})$ consists exactly of those mappings $U\in Z$ for which $R(U)\subseteq D(A)$ and $AU\in Z;$ in this case, $A_{X,Z}=AU$ for all $U\in D(A_{X,Z}).$

\begin{defn}\label{gen-res}
The $C_{X,Z}$-resolvent set of $A,$ $\rho_{C_{X,Z}}(A)$ in short, is defined as the set of those scalars $\lambda \in {\mathbb K}$
for which  the operator $\lambda I_{Z}-A_{X,Z}$ is injective, $R(C_{X,Z})\subseteq R(\lambda I_{Z}-A_{X,Z})$ and
$(\lambda I_{Z}-A_{X,Z})^{-1}C_{X,Z}\in L(Z).$ If $C=I_{Y},$ then the $C_{X,Z}$-resolvent set of $A$
is also called the $_{X,Z}$-resolvent set of $A$ and
denoted by
$\rho_{_{X,Z}}(A)$ for short.
\end{defn}

In other words, the $C_{X,Z}$-resolvent set of $A$ (the $_{X,Z}$-resolvent set of $A$) is defined as the $C_{X,Z}$-resolvent set (the
resolvent set) of the operator $A_{X,Z}$ in $Z.$
The $C_{X,Z}$-spectrum of $A,$ denoted by $\sigma_{C_{X,Z}}(A),$ is defined as the complement of set $\rho_{C_{X,Z}}(A)$ in ${\mathbb K};$ in the case that $C=I_{Y},$  $\sigma_{C_{X,Z}}(A)$ is also denoted by $\sigma_{_{X,Z}}(A)$ and called
the $_{X,Z}$-spectrum of $A.$ We can decompose the $_{X,Z}$-spectrum of $A$ into three disjunct subsets:
\begin{itemize}
\item[(i)] the point $_{X,Z}$-spectrum of $A$, shortly $\sigma_{p;_{X,Z}}(A),$ consisting of the eigenvalues of the operator $A_{X,Z},$
\item[(ii)] the continuous $_{X,Z}$-spectrum of $A$, shortly $\sigma_{c;_{X,Z}}(A),$ consisting of the scalars that are not eigenvalues of the operator $A_{X,Z},$ but make the range of $\lambda I_{Z}-A_{X,Z}$ a proper dense subset of the space $Z$,
\item[(iii)] the residual  $_{X,Z}$-spectrum of $A$, shortly $\sigma_{r;_{X,Z}}(A),$ consisting of all other scalars in the spectrum.
\end{itemize}

It can be simply verified that the closedness (closability, injectivity) of the operator $A$ on $Y$ implies the closedness (closability, injectivity) of the
operator $A_{X,Z}$ on $Z$ (cf. also \cite[Proposition 4.3]{komura}), and that $\sigma_{p;_{X,Z}}(A)\subseteq \sigma_{p}(A)$. Suppose now that $\lambda \in \rho_{C}(A)$ (defined in the same way as in (\ref{C-res}), with ${\mathbb C}$ and $E$ replaced respectively by ${\mathbb K}$ and $Y$) and $(\lambda-A)^{-1}CU\in Z$ for all $U\in Z.$ Then $\lambda \in \rho_{C_{X,Z}}(A_{X,Z})$ and $(\lambda I_{Z}-A_{X,Z})^{-1}C_{X,Z}=((\lambda-A)^{-1}C)_{X,Z};$ in particular, $\rho_{C}(A)\subseteq
\rho_{C_{X,Z}}(A_{X,Z})$ provided that $Z=L(X,Y).$ In both approaches, T. K\=omura's or T. Ushijima's, the denseness of the operator $A$ implies the denseness of the operator $
A_{X,Z};$ using the proof of \cite[Proposition 4.4]{komura} and the consideration given on page 685 of \cite{k82}, it can be proved that the same conclusion
holds in the case that
$X={\mathcal D}^{\ast}_{(-\infty ,a]},$ $Z=L({\mathcal D}^{\ast}_{(-\infty ,a]},Y),$ for some $a>0$ and $C=I_{Y}$ (cf. \cite{komura}) or that $X={\mathcal D}^{\ast},$
$Z={\mathcal D}^{\prime \ast}_{0}(Y)$ and $C=I_{Y}$ (cf. \cite{ush1}). The following trivial example shows that the denseness of the operator $A$ does not imply the denseness of the operator $
A_{X,Z}$ in general case, as well as that the choice of spaces $X$ and $Z$ is very important for saying anything relevant and noteworthy about the operator $A_{X,Z}.$

\begin{example}\label{denseness}
Let $A$ be a densely defined linear operator on $Y,$ $C=I_{Y},$ let $U\in L(X,Y)$ satisfy that $R(U)$ is not contained in $D(A),$ and let $Z=\{\alpha U : \alpha \in {\mathbb K}\}.$ Then $D(A_{X,Z})=\{0\},$  and therefore, $A_{X,Z}$ is not densely defined in $Z.$
\end{example}

Apart from this, there exists a great number of well-known identities which continue to hold for $C$-generalized resolvents. For example, the validity of inclusion $CA\subseteq AC$ implies the following:

\begin{itemize}
\item[(a)] Let $k\in {\mathbb N}_{0}$ and $\lambda,\ z\in
\rho_{C_{X,Z}}(A_{X,Z})$ with $z\neq\lambda.$ Then
\begin{align*}
\notag \bigl(z&-A_{X,Z}\bigr)^{-1}C_{X,Z} \bigl(\lambda-A_{X,Z}\bigr)^{-k}
C_{X,Z}^{k}
\\ &=\frac{(-1)^{k}}{(z-\lambda)^{k}}\bigl(z-A_{X,Z}\bigr)^{-1}C_{X,Z}^{k+1} +\sum
\limits^{k}_{i=1}\frac{(-1)^{k-i}\bigl(\lambda-A_{X,Z}\bigr)^{-i}C_{X,Z}^{k+1}}{\bigl(z-\lambda\bigr)^{k+1-i}}.
\end{align*}

\item[(b)] Let $n\in {\mathbb N}$ and $U\in D(A_{X,Z}^{n}).$ Then
\begin{align*}
\bigl(\lambda &-A_{X,Z}\bigr)^{-1}C_{X,Z}U
={\lambda}^{-1}C_{X,Z}U+{\lambda}^{-2}C_{X,Z}A_{X,Z}U
\\&+\cdot \cdot \cdot+{\lambda}^{-n}C_{X,Z}A_{X,Z}^{n-1}U+{\lambda}^{-n}\bigl(\lambda-A_{X,Z}\bigr)^{-1}C_{X,Z}A_{X,Z}^{n}U.
\end{align*}
\end{itemize}

%Primarily from the space and time limitations, we will not be in a position to further analyze the other structural properties of $C$-generalized resolvents of linear operators in locally convex spaces. We want only to remind the reader of the following fundamental theorem proved by
%T. K\=omura (cf. \cite[Theorem 3]{komura}, \cite[Theorem 4.1]{ush1}
%and \cite[Theorem 2, Theorem 3]{vuaq};
In  K\=omura's approach \cite{komura}, let   $X={\mathcal D}_{(-\infty ,a]},$ $Y=E,$ $Z=L({\mathcal D}_{(-\infty ,a]},E)$ for some $a>0,$ and $C=I_{E}$). A
linear operator $A$ in $E$
is the infinitesimal generator of a uniquely determined locally equicontinuous
$C_{0}$-semigroup $(T(t))_{t\geq 0}$ in $E$ iff the following holds:
\begin{itemize}
\item[(1)] $A$ is a closed linear operator with a dense domain $D(A);$
\item[(2)] for any $a > 0,$ in the space ${\mathbf D}^{\prime}_{a}(E)$ the following conditions are
satisfied:
\begin{itemize}
\item[(a)] there exists the generalized resolvent $(\lambda I_{Z}- {\mathbf A})^{-1}$ of $A;$
\item[(b)] for any fixed complex number $\lambda$, there is a continuous linear
operator $R(\lambda)$ on $E$ into itself such that for any fixed $x \in E,$ $R(\lambda)x$ is an
$E$-valued entire function in $\lambda$ satisfying that there exists $k\in {\mathbb N}$ such that for
any continuous seminorm $p$ on $E,$ there exist an integer $N = N(p) > 0$
and a number $C = C(p) > 0$ with
$p(R(\lambda) x) \leq C(1 + |\lambda|)^{N}e^{k|\Re \lambda|},$ $\lambda \in {\mathbb C},$
as well as that $R(\lambda)x$
is a representation
of $(\lambda I_{Z} - {\mathbf A})^{-1}$ and the family of operators
$$
\Biggl \{ \frac{\lambda^{n+1}}{n!}\frac{d^{n}}{d\lambda^{n}}R(\lambda) : \lambda >0,\ n\in {\mathbb N}_{0} \Biggr\}\subseteq L(E) \quad \mbox{is equicontinuous.}
$$

\end{itemize}
\end{itemize}
The proof of \cite[Theorem 3]{komura} is rather long and can be trivially modified only for the class of locally equicontinuous
$C$-regularized semigroups in SCLCS's (recall that there exist examples of integrated semigroups and $C$-regularized semigroups in Banach  spaces with not necessarily densely defined generators, so that we cannot expect the validity of (1) in this framework). On the other hand, some necessary and sufficent conditions for the generation of locally equicontinuous $K$-convoluted $C$-semigroups in SCLCS's, defined locally or globally, can be very simply clarified if we use the notion of asymptotic $\Theta C$-resolvents (cf. S. \=Ouchi \cite{ouchi} for the pioneering results in this direction, \cite{kuosi}, \cite{t212}, \cite{w18} and \cite{knjigah} for the Banach space case): Let $0<\tau \leq \infty,$ let $\gamma\in[0,\tau),$ and let $K\in L_{loc}^{1}([0,\tau)),$ $K\neq 0.$ Set $\Theta (t):=\int^{t}_{0}K(s)\, ds,$ $t\in [0,\tau).$ An operator family $\{L_{\gamma}(\lambda):\gamma\in[0,\tau)$, $\lambda\geq 0\}\subseteq L(E)$
is called an asymptotic $\Theta C$-resolvent for $A$ iff there exists a strongly continuous operator family
$(V(t))_{t\in[0,\tau)}\subseteq L(E)$ such that the following conditions hold:
\begin{itemize}
\item[(i)] For every fixed element $x\in E,$ the function $\lambda\to L_{\gamma} (\lambda)x,$ $\lambda \geq 0$
belongs to $C^{\infty}([0,\infty):E)$ and the operator family
\[
\Biggl\{ \frac{\lambda^n}{(n-1)!}
\frac{d^{n-1}}{d\lambda^{n-1}} L_{\gamma}(\lambda) : \lambda\geq 0,\;n\in\mathbb{N} \Biggr\}\subseteq L(E)
\]
is equicontinuous.
\item[(ii)] $L_{\gamma}(\lambda)$ commutes with $C$ and $A$ for all $\lambda\geq0$.
\item[(iii)] $(\lambda-A) L_{\gamma}(\lambda)x=-e^{-\lambda\gamma}V(\gamma)x
+\int^{\gamma}_0 e^{-\lambda s}K(s)Cx\,ds$, $\lambda\geq0$.
\item[(iv)] $L_{\gamma}(\lambda) L_{\gamma}(\eta)=L_{\gamma}(\eta)L_{\gamma}(\lambda)$,
$\lambda\geq 0$, $\eta\geq 0$.
\end{itemize}
Keeping this notion in mind, it can be straightforwardly verified that
the assertions of \cite[Proposition 2.3.18, Theorem 2.3.19-Theorem 2.3.20]{knjigah} continue to hold in locally convex spaces with minor technical modifications.

%Furthermore, it seems to us that the same approach can be successfully applied
%in clarifying some sufficient and necessary conditions on the generation of locally equicontinuous $(a,k)$-regularized $C$-resolvent families in locally convex spaces.
%The interested reader will probably find some light
%relief in carrying out details concerning this question.
%
%Now we are coming back to our considerations from the beginning of this section.

Following \cite[Definition 2.1]{ush1}, it will be said that an $L(E)$-valued distribution (ultradistribution of $\ast$-class) ${\mathcal G}$ is boundedly equicontinuous
iff for every $p\in \circledast$ and for every bounded subset $B$ of ${\mathcal D}$ (${\mathcal D}^{\ast}$), there exist $c>0$ and
$q\in \circledast$ such that
$$
p({\mathcal G}(\varphi)x)\leq cq(x),\quad \varphi \in B, \ x\in E.
$$
If $E$ is barreled, then the uniform boundedness principle \cite[p. 273]{meise} implies that each ${\mathcal G}\in {\mathcal D}'(L(E))$ (${\mathcal G}\in {\mathcal D}'^{*}(L(E))$) is automatically boundedly equicontinuous.

Suppose now that the operator  $A$ is $C$-wellposed for the abstract Cauchy problem $u'-Au=G$ at $t=0$ in the sense of distributions
(ultradistributions of $\ast$-class); for the sake of brevity, we will consder only the ultradistribution case. Put $G_{x}(\varphi):=\varphi(0)x,$ $\varphi \in {\mathcal D}^{\ast}$ ($x\in X$).
Then we define ${\mathcal G}(\varphi)x:=U_{G_{x}}(\varphi),$ $\varphi \in {\mathcal D}^{\ast}$ ($x\in X$). Using the fact that the space ${\mathcal D}^{\ast}$ is barelled and the arguments already used in the proof of \cite[Theorem 2.1]{ush1}, we can simply prove the following theorem.

\begin{thm}\label{59}
Let $A$ be $C$-wellposed, let $C\in L(E)$ be injective, and let $CA\subseteq AC$. Then there exists a boundedly equicontinous ${\mathcal G}\in\DD^{\prime}_{0}(L(E))$ $({\mathcal G}\in {\mathcal D}^{\prime \ast}_{0}(L(E)))$ satisfying the following properties:
\begin{itemize}
\item[(i)] For any $x\in E$ and $\varphi\in\DD$ $(\varphi\in\DD^{\ast})$, we have ${\mathcal G}(\varphi)x\in D(A)$ and ${\Big(}\frac{d}{dt}{\mathcal G}{\Big)}(\varphi)x-A{\mathcal G}(\varphi)x= \delta(\varphi)Cx$;
\item[(ii)] For any $x\in D(A)$, $\varphi\in\DD$ $(\varphi\in\DD^{\ast})$, we have ${\mathcal G}(\varphi)Ax=A{\mathcal G}(\varphi)x$;
\item[(iii)] For any  $\varphi\in\DD$ $(\varphi\in\DD^{\ast})$, we have ${\mathcal G}(\varphi)Cx=C{\mathcal G}(\varphi)x.$
\end{itemize}
\end{thm}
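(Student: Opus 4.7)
The plan is to mimic Ushijima's construction from \cite[Theorem 2.1]{ush1} by setting
$$
{\mathcal G}(\varphi)x:=U_{G_x}(\varphi),\quad \varphi\in\DD^{\ast},\ x\in E,
$$
where $G_x:=\delta\otimes x\in\DD'^{\ast}_{0}(E)$, i.e.\ $G_x(\varphi)=\varphi(0)x$. First I would verify that this formula really produces a boundedly equicontinuous element of $\DD'^{\ast}_{0}(L(E))$. The elementary estimate $p_{B}(G_x)=(\sup_{\varphi\in B}|\varphi(0)|)\,p(x)$ shows that $x\mapsto G_x$ is continuous from $E$ into $\DD'^{\ast}_{0}(E)$; composing with the continuous linear map $G\mapsto U_G$ from Definition \ref{C-wellposed}(ii) and evaluating at $\varphi$ yields both ${\mathcal G}(\varphi)\in L(E)$ and the uniform bound $p({\mathcal G}(\varphi)x)\leq c_{p,B}\,q(x)$ for all $\varphi\in B,\ x\in E$ whenever $B\subseteq\DD^{\ast}$ is bounded. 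Continuity of $\varphi\mapsto{\mathcal G}(\varphi)$ into $L(E)$ then comes for free from the Banach--Steinhaus principle applied to the barrelled space $\DD^{\ast}$, as already noted before the statement of the theorem.

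Property (i) is immediate from the wellposedness conditions: $U_{G_x}(\varphi)\in D(A)$ and
$$
U_{G_x}'(\varphi)-AU_{G_x}(\varphi)=CG_x(\varphi)=\varphi(0)Cx=\delta(\varphi)Cx.
$$
For property (iii), a uniqueness argument does the job: set $V:=CU_{G_x}\in\DD'^{\ast}_{0}(E)$. From $CA\subseteq AC$ one infers $C(D(A))\subseteq D(A)$ and $ACy=CAy$ for $y\in D(A)$, so $V(\varphi)\in D(A)$ and
$$
V'(\varphi)-AV(\varphi)=C\bigl[U_{G_x}'(\varphi)-AU_{G_x}(\varphi)\bigr]=\varphi(0)C(Cx)=CG_{Cx}(\varphi).
$$
Uniqueness in Definition \ref{C-wellposed} forces $V=U_{G_{Cx}}$, which is exactly $C{\mathcal G}(\varphi)x={\mathcal G}(\varphi)Cx$.

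The main obstacle is (ii), because the naive candidate ``$AU_{G_x}$'' need not automatically land in $D(A)$. The remedy is to define the candidate ultradistributionally: for $x\in D(A)$ set
$$
W(\varphi):=-U_{G_x}(\varphi')-\varphi(0)Cx,\quad\varphi\in\DD^{\ast},
$$
which is precisely the pointwise action of $A$ on $U_{G_x}$, via the identity $AU_{G_x}(\varphi)=U_{G_x}'(\varphi)-\varphi(0)Cx$ read from Definition \ref{C-wellposed}(iii). Since $U_{G_x}(\varphi')\in D(A)$ by Definition \ref{C-wellposed}(i) and $Cx\in D(A)$ (because $x\in D(A)$ and $CA\subseteq AC$), one gets $W(\varphi)\in D(A)$ for every $\varphi$. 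A short computation using $U_{G_x}'(\varphi')=-U_{G_x}(\varphi'')$ and $ACx=CAx$ then gives
$$
W'(\varphi)-AW(\varphi)=\varphi(0)CAx=CG_{Ax}(\varphi),
$$
and a final appeal to uniqueness yields $W=U_{G_{Ax}}$, i.e.\ ${\mathcal G}(\varphi)Ax=A{\mathcal G}(\varphi)x$. The distribution case is handled identically with $\DD^{\ast}$ replaced by $\DD$.
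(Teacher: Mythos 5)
Your proposal is correct and takes essentially the same route as the paper: the construction ${\mathcal G}(\varphi)x:=U_{G_{x}}(\varphi)$ with $G_{x}=\delta\otimes x$ is exactly the one the paper sets up in the paragraph preceding the theorem, where it then invokes the barrelledness of ${\mathcal D}^{\ast}$ and the arguments of \cite[Theorem 2.1]{ush1}. Your explicit verifications --- Banach--Steinhaus on the barrelled test space for bounded equicontinuity, and the uniqueness-of-solutions argument applied to $V=CU_{G_{x}}$ for (iii) and to $W(\varphi)=-U_{G_{x}}(\varphi')-\varphi(0)Cx$ for (ii) --- are precisely the details the paper delegates to Ushijima's proof.
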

\begin{proof}
If $E$ is sequentially complete, then the opposite direction of Theorem \ref{59} holds.
In order to prove that, we need two auxiliaries lemmas whose proofs in ultradistribution case can be deduced by slightly modifying the corresponding proofs of \cite[Proposition 2.1, Proposition 2.2]{ush1}.\end{proof}

\begin{lem}\label{lemma1}
The algebraic tensor product ${\mathbf D}^{\prime \ast}_0\otimes E$ is dense in ${\mathbf D}^{\prime \ast}_0(E)$. If $A$ is closed linear operator on $E$, then ${\mathbf D}^{\prime \ast}_0\otimes D(A)$ is dense in $D({\mathbf A})$ topologized by the graph topology of ${\mathbf A}$.
\end{lem}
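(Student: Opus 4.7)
The plan is to transport the problem through the Laplace-transform isomorphism $\hat{\cdot}: {\mathcal D}'^{\ast}_{0}(E) \to {\mathbf D}'^{\ast}_{0}(E)$ (proved a linear topological homeomorphism in the preliminaries) and its scalar counterpart $\hat{\cdot}: {\mathcal D}'^{\ast}_{0} \to {\mathbf D}'^{\ast}_{0}$. Because these intertwine the tensor products (by the very definition $\langle \hat{F} \otimes x, \hat{\varphi}\rangle = \langle F, \varphi\rangle x$) and commute with the componentwise action of $A$, it suffices to establish the analogue in ${\mathcal D}'^{\ast}_{0}$: the algebraic tensor product ${\mathcal D}'^{\ast}_{0} \otimes E$ is dense in ${\mathcal D}'^{\ast}_{0}(E)$, and, when $A$ is closed, ${\mathcal D}'^{\ast}_{0} \otimes D(A)$ is dense in the subspace $\{G \in {\mathcal D}'^{\ast}_{0}(E) : G(\varphi) \in D(A) \text{ for every } \varphi \in {\mathcal D}^{\ast}, \text{ and } \varphi \mapsto A G(\varphi) \text{ lies in } {\mathcal D}'^{\ast}_{0}(E)\}$, equipped with its graph topology.

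For the first assertion, I would regularize and then discretize. Fix $G \in {\mathcal D}'^{\ast}_{0}(E)$ and a regularizing sequence $(\rho_{n}) \subseteq {\mathcal D}^{\ast}$ with $\mathrm{supp}\,\rho_{n} \subseteq [0,1/n]$ and $\int \rho_{n} = 1$, which exists via the Komatsu construction recalled in the excerpt. Then $\rho_{n} \ast G$ is a regular $E$-valued ultradistribution corresponding to a continuous function $f_{n} : [0,\infty) \to E$, and $\rho_{n} \ast G \to G$ in ${\mathcal D}'^{\ast}_{0}(E)$. For any $\varphi \in {\mathcal D}^{\ast}$ supported in $[-R,R]$ one has
\begin{equation*}
\langle S_{n,m}, \varphi \rangle := \sum_{k=0}^{\lfloor Rm \rfloor} \tfrac{1}{m}\, \varphi(k/m)\, f_{n}(k/m), \qquad S_{n,m} := \sum_{k=0}^{\lfloor Rm \rfloor} \tfrac{1}{m}\, \delta_{k/m} \otimes f_{n}(k/m) \in {\mathcal D}'^{\ast}_{0} \otimes E,
\end{equation*}
and $S_{n,m} \to \rho_{n}\ast G$ as $m\to\infty$, convergence being uniform on bounded subsets of ${\mathcal D}^{\ast}_{K}$ by uniform continuity of $\varphi f_{n}$ on compacta. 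A diagonal extraction produces a net in ${\mathcal D}'^{\ast}_{0} \otimes E$ converging to $G$.

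For the graph-topology assertion the key observation is that convolution with the scalar $\rho_{n}$ commutes with $A$ at the level of ultradistributions: if $G \in D(\mathbf{A})$ then
\begin{equation*}
(\rho_{n}\ast G)(\varphi) = G(\check{\rho}_{n}\ast \varphi) \in D(A), \qquad A(\rho_{n}\ast G)(\varphi) = (\mathbf{A}G)(\check{\rho}_{n}\ast \varphi) = (\rho_{n}\ast \mathbf{A}G)(\varphi),
\end{equation*}
so that $f_{n}$ takes values in $D(A)$ and $A f_{n}$ is continuous and equal to the function attached to $\rho_{n}\ast \mathbf{A}G$. Thus the Riemann sums $S_{n,m}$ above automatically lie in ${\mathcal D}'^{\ast}_{0} \otimes D(A)$, while $\mathbf{A}\, S_{n,m} = \sum_{k} \tfrac{1}{m}\, \delta_{k/m} \otimes A f_{n}(k/m)$ is precisely the parallel Riemann approximation of $\rho_{n}\ast \mathbf{A}G$; consequently $S_{n,m} \to G$ and $\mathbf{A}S_{n,m} \to \mathbf{A}G$ simultaneously, which is convergence in the graph topology. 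The main obstacle is the Roumieu case, in which ${\mathcal D}^{\{M_{p}\}}$ is an inductive limit and the natural mode of convergence in ${\mathcal D}'^{\{M_{p}\}}_{0}(E)$ requires equicontinuity on each step ${\mathcal D}^{M_{p},h}_{K}$; this forces the mollifier $\rho_{n}$ to be chosen inside a single $h$-class and the Riemann approximations to be checked uniformly on the bounded sets of each step, which is the genuinely technical point behind the reference to "slightly modifying" the arguments of \cite[Propositions 2.1, 2.2]{ush1}.
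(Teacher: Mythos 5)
Your proposal is correct and takes essentially the same route as the paper, whose ``proof'' consists solely of deferring to Ushijima's Propositions 2.1 and 2.2 with slight modifications for the ultradistribution case: your transport through the Laplace-transform homeomorphism $\hat{\cdot}$ (which intertwines tensors via $\widehat{F\otimes x}=\hat{F}\otimes x$ and the componentwise operator $\mathbf{A}$), followed by one-sided mollification with $\mathrm{supp}(\rho_{n})\subseteq[0,1/n]$ and Riemann-sum discretization by Dirac tensors $\delta_{k/m}\otimes f_{n}(k/m)$, with the graph topology handled by the commutation $A(\rho_{n}\ast G)=\rho_{n}\ast(\mathbf{A}G)$, is exactly the standard argument behind the cited propositions. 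The only cosmetic point is that your $S_{n,m}$ depends on the support radius $R$ of the bounded set being tested, so strictly speaking you produce, for each neighborhood of $G$ in $L_{b}(\mathcal{D}^{\ast},E)$ (resp.\ in the graph topology), an element of the tensor product lying in it---which is all density requires---and you rightly identify the Roumieu-case equicontinuity on the steps $\mathcal{D}^{M_{p},h}_{K}$ as the genuine content of the paper's ``slight modification.''
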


\begin{lem}\label{lemma2} (cf. also \cite[Section 3]{ku113} for Banach space valued ultradistributions)
For any boundedly equicontinuous ultradistribution ${\mathcal G}\in\DD^{\prime \ast}_0(L(E))$, there exists a unique convolution operator ${\mathcal G}\ast \cdot \in L(\DD^{\prime \ast}_{0}(E))$ satisfying that for $f=F\otimes x$ (defined in the obvious way), with arbitrary $F\in\DD^{\prime \ast}_0$ and $x\in E,$ we have:
$$
({\mathcal G}\ast f)(\varphi)={\mathcal G}_t\bigl(\alpha(t)F_s(\varphi(t+s))\bigr)x,
$$
where $\alpha(t)$ is an arbitrary smooth function with \emph{supp}$(\alpha)\subset[a,\infty)$, $a>-\infty$ and $\alpha(t)=1$ for $t\geq0$.
\end{lem}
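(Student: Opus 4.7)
The overall plan is to mirror the distribution-case argument of \cite[Proposition 2.2]{ush1} but to handle the two extra technical points peculiar to ultradistributions: (a) membership of the auxiliary test object in $\DD^{\ast}$ (especially in the Roumieu case), and (b) the $\ast$-class convolution estimates. I would organize the proof in five steps.

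\textbf{Step 1: The defining formula makes sense.} Fix $f=F\otimes x$ with $F\in\DD^{\prime\ast}_0$ and $x\in E$, and fix $\varphi\in\DD^{\ast}$. I first verify that for an admissible cutoff $\alpha$ the function
\[
\Psi_{\varphi}(t):=\alpha(t)F_{s}\bigl(\varphi(t+s)\bigr),\quad t\in\RR,
\]
belongs to $\DD^{\ast}$. Smoothness of class $\ast$ in $t$ follows because $s\mapsto\varphi(t+s)$ depends on $t$ in $\DD^{\ast}$-regular fashion (by the standard differentiation-under-the-pairing argument for scalar ultradistributions, using (M.2) to control the associated Denjoy--Carleman seminorms) and $F$ is continuous on $\DD^{\ast}$. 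Compact support in $t$ comes from two sides: on the left from $\mathrm{supp}(\alpha)\subseteq[a,\infty)$, and on the right from the facts that $\mathrm{supp}(F)\subseteq[0,\infty)$ and $\varphi$ has compact support, which together force $s\mapsto\varphi(t+s)$ to have empty intersection of supports with $[0,\infty)$ once $t$ exceeds $\sup\mathrm{supp}(\varphi)$. Thus $\Psi_{\varphi}\in\DD^{\ast}$, so ${\mathcal G}_{t}(\Psi_{\varphi})$ is a well-defined element of $L(E)$, and we may pair it with $x$.

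\textbf{Step 2: Independence of $\alpha$ and support in $[0,\infty)$.} If $\alpha_{1},\alpha_{2}$ are two cutoffs of the required type, then $(\alpha_{1}-\alpha_{2})\Psi_{\varphi}$ is smooth with support in $(-\infty,0)$, while $\mathrm{supp}({\mathcal G})\subseteq[0,\infty)$; hence ${\mathcal G}_{t}((\alpha_{1}-\alpha_{2})(t)F_{s}(\varphi(t+s)))=0$. In particular one may, at will, take $\alpha\equiv1$ on $[0,\infty)$ and vanish to the left, and the formula depends only on $\varphi,F,x$. If $\mathrm{supp}(\varphi)\subseteq(-\infty,0)$, then the support considerations above force $\Psi_{\varphi}\equiv0$, so $({\mathcal G}\ast f)(\varphi)=0$; this gives $\mathrm{supp}({\mathcal G}\ast f)\subseteq[0,\infty)$, i.e.\ ${\mathcal G}\ast f\in\DD^{\prime\ast}_{0}(E)$.

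\textbf{Step 3: Continuity on the tensor image.} The map $\varphi\mapsto\Psi_{\varphi}$ is continuous from $\DD^{\ast}$ into $\DD^{\ast}$ (the relevant bounded sets in $\DD^{\ast}$ map to bounded sets by the same computation as in Step 1). Composing with ${\mathcal G}_{t}(\cdot)x$, which is continuous into $E$ by the assumed bounded equicontinuity of ${\mathcal G}$, yields an element of $\DD^{\prime\ast}(E)$. Linearity in $f$ on the algebraic tensor product $\DD^{\prime\ast}_{0}\otimes E$ is immediate, and the estimate provided by bounded equicontinuity together with the fact that $F\otimes x\mapsto\Psi_{\varphi}$ is continuous in $F\otimes x$ for each fixed $\varphi$ shows that ${\mathcal G}\ast\cdot$ is continuous from $\DD^{\prime\ast}_{0}\otimes E$, endowed with the topology induced by $\DD^{\prime\ast}_{0}(E)$, into $\DD^{\prime\ast}_{0}(E)$.

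\textbf{Step 4: Extension and uniqueness.} By Lemma \ref{lemma1}, $\DD^{\prime\ast}_{0}\otimes E$ is dense in $\DD^{\prime\ast}_{0}(E)$. Since $E$ (and hence $\DD^{\prime\ast}_{0}(E)$) is sequentially complete, the continuous linear map constructed in Step~3 extends uniquely to an element of $L(\DD^{\prime\ast}_{0}(E))$, which we still denote by ${\mathcal G}\ast\cdot$. Uniqueness of any operator satisfying the stated formula on elementary tensors is an immediate consequence of this density together with continuity.

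The step I expect to require the most care is Step~1, specifically verifying that $\Psi_{\varphi}\in\DD^{\ast}$ with the correct seminorm control in the Roumieu case; the argument goes through by using the weight sequences $(r_{p})\in\mathcal{R}$ and the characterization \eqref{E2}, combined with (M.2) to estimate derivatives of $s\mapsto\varphi(t+s)$ in the $\|\cdot\|_{M_{p},h,K}$ norms uniformly in $t$ on compact sets. Everything else is an adaptation of the distribution-case mechanism of \cite[Proposition 2.2]{ush1}.
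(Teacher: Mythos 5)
Your Steps 1 and 2 are essentially sound, modulo two slips worth recording: in the ultradistribution case the cutoff $\alpha$ must be taken ultradifferentiable of class $\ast$, not merely smooth, for $\Psi_{\varphi}$ to lie in $\DD^{\ast}$; and when $\mathrm{supp}(\varphi)\subseteq(-\infty,0)$ one gets $\mathrm{supp}(\Psi_{\varphi})\subseteq[a,\sup\mathrm{supp}(\varphi)]$ rather than $\Psi_{\varphi}\equiv 0$ (take $F=\delta$, so that $\Psi_{\varphi}=\alpha\varphi\neq 0$); the vanishing of $({\mathcal G}\ast f)(\varphi)$ comes from $\mathrm{supp}({\mathcal G})\subseteq[0,\infty)$, not from $\Psi_{\varphi}$. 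The genuine gap is in Steps 3--4, i.e., exactly where you locate the existence of the operator on all of $\DD^{\prime\ast}_{0}(E)$. First, Step 3 does not establish the continuity you need: the topology of $\DD^{\prime\ast}_{0}(E)$ is given by seminorms $q_{B'}(f)=\sup_{\psi\in B'}q(f(\psi))$, and you must prove $p_{B}({\mathcal G}\ast f)\leq C\,q_{B'}(f)$ for \emph{all} $f$ in the algebraic tensor product. Your ingredients (continuity of $F\otimes x\mapsto\Psi_{\varphi}$ for fixed $\varphi$, and bounded equicontinuity applied tensor-by-tensor) only yield, for a \emph{chosen} representation $f=\sum_{i}F_{i}\otimes x_{i}$, a bound by $\sum_{i}q_{B'}(F_{i}\otimes x_{i})$; since representations are non-unique and the triangle inequality goes the wrong way, this is not dominated by $q_{B'}(f)$: cancellations among the summands are invisible to your estimate. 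Second, even granting Step 3, the extension in Step 4 is invalid as stated: density of $\DD^{\prime\ast}_{0}\otimes E$ (Lemma \ref{lemma1}, transported through the homeomorphism $\hat{\cdot}$) together with mere \emph{sequential} completeness of the codomain does not permit extending a continuous linear map from a dense subspace; one needs completeness of $\DD^{\prime\ast}_{0}(E)$ (not available when $E$ is only sequentially complete) or sequential density, which Lemma \ref{lemma1} does not assert and you do not prove.

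The proof the paper has in mind (it gives no details, deferring to a slight modification of \cite[Proposition 2.2]{ush1}; cf. also \cite[Section 3]{ku113}) is organized so that neither problem arises. One first extends ${\mathcal G}$ to a pairing $\psi\mapsto{\mathcal G}_{t}(\psi(t))\in E$ on $E$-valued test functions $\psi\in\DD^{\ast}(E)$, by approximating $\psi$ by elements of $\DD^{\ast}\otimes E$ (Riemann-sum/partition approximations): bounded equicontinuity guarantees that the images of an approximating sequence are Cauchy in $E$, and the sequential completeness of $E$ --- this is precisely where that hypothesis enters --- furnishes the limit, together with an estimate of $p({\mathcal G}_{t}(\psi(t)))$ in terms of the seminorms of $\psi$ in $\DD^{\ast}(E)$. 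One then \emph{defines} $({\mathcal G}\ast f)(\varphi):={\mathcal G}_{t}(\alpha(t)f_{s}(\varphi(t+s)))$ directly for every $f\in\DD^{\prime\ast}_{0}(E)$, which for elementary tensors collapses to your scalar formula; membership in $L(\DD^{\prime\ast}_{0}(E))$ follows from the pairing estimate, and the density of Lemma \ref{lemma1} is invoked only for \emph{uniqueness}, where it is unproblematic because two continuous operators agreeing on a dense subspace coincide. By restricting the construction to elementary tensors you made the abstract extension unavoidable, and that is the step that fails; the substantive work (your class-$\ast$ verification in Step 1, which is fine and is indeed the ultradistribution-specific point) should instead be invested in constructing the vector-valued pairing.
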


\begin{thm}\label{opositethm}
Let $E$ be a sequentially complete locally convex space, let $C\in L(E)$ be an injective operator, and let $CA\subseteq AC$. Then a closed linear operator $A$ on $E$ is $C$-wellposed iff there exists a boundedly equicontinuous ${\mathcal G}\in\DD^{\prime}_0(L(E))$ (${\mathcal G}\in\DD^{\prime \ast}_0(L(E))$) satisfying \emph{(i)-(iii)} of \emph{Theorem \ref{59}}.
\end{thm}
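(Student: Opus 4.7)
The forward direction ``$C$-wellposed $\Rightarrow$ existence of ${\mathcal G}$'' is exactly Theorem \ref{59}. To establish the converse, the plan is to use ${\mathcal G}$ as a fundamental solution and define the solution map by convolution. Given a boundedly equicontinuous ${\mathcal G}\in\DD^{\prime \ast}_{0}(L(E))$ satisfying (i)--(iii) of Theorem \ref{59}, for every $G\in\DD^{\prime \ast}_{0}(E)$ I set
$$
U_{G}:={\mathcal G}\ast G,
$$
interpreting this via Lemma \ref{lemma2}. The assignment $G\mapsto U_{G}$ is then continuous from $\DD^{\prime \ast}_{0}(E)$ into itself, which gives condition (ii) of Definition \ref{C-wellposed} for free; this is the cheap part.

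The next step is to verify conditions (i) and (iii) of Definition \ref{C-wellposed} first on the dense subspace ${\mathbf D}^{\prime \ast}_{0}\otimes E$ of $\DD^{\prime \ast}_{0}(E)$ supplied by Lemma \ref{lemma1}. For an elementary tensor $G=F\otimes x$, Lemma \ref{lemma2} gives
$$
U_{G}(\varphi)={\mathcal G}_{t}\bigl(\alpha(t)F_{s}(\varphi(t+s))\bigr)x,
$$
which lies in $D(A)$ by Theorem \ref{59}(i). Property (i) of Theorem \ref{59}, namely $({\tfrac{d}{dt}}{\mathcal G})(\psi)x-A{\mathcal G}(\psi)x=\delta(\psi)Cx$, then translates directly into $U_{G}'(\varphi)-AU_{G}(\varphi)=CG(\varphi)$ for the test function $\varphi$, upon rewriting the inner action of $F_{s}$. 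Extension to arbitrary $G\in\DD^{\prime \ast}_{0}(E)$ is carried out by approximation: picking a net $(G_{n})\subset {\mathbf D}^{\prime \ast}_{0}\otimes E$ with $G_{n}\to G$, continuity of the convolution gives $U_{G_{n}}(\varphi)\to U_{G}(\varphi)$, whereas $A U_{G_{n}}(\varphi)=U_{G_{n}}'(\varphi)-CG_{n}(\varphi)\to U_{G}'(\varphi)-CG(\varphi)$; the closedness of $A$ then places $U_{G}(\varphi)$ in $D(A)$ and yields the equation. Sequential completeness of $E$ is used here to handle limits of $E$-valued evaluations.

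For uniqueness, suppose $V\in\DD^{\prime \ast}_{0}(E)$ satisfies $V(\varphi)\in D(A)$ for every $\varphi$ and $V'-AV=0$. I would convolve from the left with ${\mathcal G}$: on the one hand, using Theorem \ref{59}(ii) (that ${\mathcal G}(\psi)$ commutes with $A$ on $D(A)$) plus the tensor-product density and closedness trick applied to $V$, one gets ${\mathcal G}\ast AV=A({\mathcal G}\ast V)$; on the other hand, using Theorem \ref{59}(i) in the form ${\mathcal G}'-A{\mathcal G}=\delta\otimes C$, one obtains ${\mathcal G}\ast V'-A({\mathcal G}\ast V)=C V$. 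Combining the two identities with the hypothesis $V'=AV$ forces $CV=0$, and injectivity of $C$ (Theorem \ref{59}(iii) ensures that convolution is compatible with $C$) gives $V=0$.

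The main obstacle will be justifying the operator-valued convolution manipulations rigorously in the ultradistribution category---in particular the identity ${\mathcal G}\ast AV=A({\mathcal G}\ast V)$ for a general $V\in\DD^{\prime \ast}_{0}(E)$ whose range under evaluation lies in $D(A)$. This is where Lemma \ref{lemma1} is indispensable: one first verifies the identity on ${\mathbf D}^{\prime \ast}_{0}\otimes D(A)$, where it reduces to the pointwise commutation relation in Theorem \ref{59}(ii), and then pushes through the closure of $A$ (with respect to the graph topology of ${\mathbf A}$) using the density statement of Lemma \ref{lemma1} together with the continuity of convolution. Modulo these technicalities, which follow the template of the proof of \cite[Theorem 2.1]{ush1} adapted to the $C$-regularized setting, the argument is complete.
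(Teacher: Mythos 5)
Your proposal is correct and takes essentially the same route as the paper: the paper's own (one-sentence) proof of sufficiency likewise defines $U_{G}:={\mathcal G}\ast G$ via the convolution operator of Lemma \ref{lemma2}, invokes the density statement of Lemma \ref{lemma1}, and appeals to the template of the proof of Theorem 2.1 in Ushijima's work. Your write-up simply supplies the details (verification on elementary tensors, extension by closedness of $A$, and the uniqueness argument forcing $CV=0$ and hence $V=0$ by injectivity of $C$) that the paper leaves implicit.
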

\begin{proof}
The sufficiency can be given directly, by simple showing that for any  boundedly equicontinuous ultradistribution ${\mathcal G}\in\DD^{\prime \ast}_0(L(E))$, $G\mapsto U_{G}:={\mathcal G} \ast G$ is a unique mapping  belonging to $L( {\mathcal D}'_{0}(E))$ and satisfying the properties (i)-(iii) from Definition \ref{C-wellposed}.
\end{proof}
Observe, finally, that Theorem \ref{59} and Theorem \ref{opositethm} can be simply reformulated in the case that $A$ is exponentially $C$-wellposed.

\section{The basic properties of C-distribution semigroups and
C-ultradistribution semigroups in locally convex spaces}

%We start this section by introducing the following definition.

\begin{defn}\label{cuds}
Let $\mathcal{G}\in\mathcal{D}_0'(L(E))$ ($\mathcal{G}\in\mathcal{D}_0'^{\ast}(L(E))$) satisfy $C\mathcal{G}=\mathcal{G}C,$
and let $\mathcal{G}$ be boundedly equicontinuous. Then it is said that $\mathcal{G}$ is a pre-(C-DS) (pre-(C-UDS) of $\ast$-class) iff the following holds:
\[\tag{C.S.1}
\mathcal{G}(\varphi*_0\psi)C=\mathcal{G}(\varphi)\mathcal{G}(\psi),\quad \varphi,\;\psi\in\mathcal{D} \ \ (\varphi,\;\psi\in\mathcal{D}^{\ast}).
\]
If, additionally,
\[\tag{C.S.2}
\mathcal{N}(\mathcal{G}):=\bigcap_{\varphi\in\mathcal{D}_0}N(\mathcal{G}(\varphi))=\{0\} \ \  \Biggl(\mathcal{N}(\mathcal{G}):=\bigcap_{\varphi\in\mathcal{D}^{\ast}_0}N(\mathcal{G}(\varphi))=\{0\}\Biggr),
\]
then $\mathcal{G}$ is called a $C$-distribution semigroup ($C$-ultradistribution semigroup of $\ast$-class), (C-DS) ((C-UDS)) in short.
A pre-(C-DS) $\mathcal{G}$ is called dense if
\[\tag{C.S.3}
\mathcal{R}(\mathcal{G}):=\bigcup\limits_{\varphi\in\mathcal{D}_0}R(\mathcal{G}(\varphi))
\text{ is dense in }E\
\Biggl(\mathcal{R}(\mathcal{G}):=\bigcup\limits_{\varphi\in\mathcal{D}^{\ast}_0}R(\mathcal{G}(\varphi))
\text{ is dense in }E \Biggr).
\
\]
The notion of a dense pre-(C-UDS) $\mathcal{G}$ of $\ast$-class (and the set $\mathcal{R}(\mathcal{G})$) is defined similarly.
\end{defn}

\begin{rem}\label{redun}
\begin{itemize}
\item[(i)]
We have assumed that ${\mathcal G}$ is boundedly equicontinuous in order to stay consistent with the notion introduced in \cite{ush1} and our previous analysis. Observe, however, that the assumption on bounded equicontinuity of ${\mathcal G}$
is slightly redundant and that we can rephrased a great part of our results in the case that ${\mathcal G}$ does not satisfy this condition.
\item[(ii)] If $C=I,$ then we also write pre-(DS), pre-(UDS), (DS), (UDS), ... , instead of pre-(C-DS), pre-(C-UDS), (C-DS), (C-UDS).
\end{itemize}
\end{rem}

Suppose that $\mathcal{G}$ is a pre-(C-DS) (pre-(C-UDS) of $\ast$-class). Then
$\mathcal{G}(\varphi)\mathcal{G}(\psi)=\mathcal{G}(\psi)\mathcal{G}(\varphi)$ for all $\varphi,\,\psi\in\mathcal{D}$ ($\varphi,\,\psi\in\mathcal{D}^{\ast}$),
and $\mathcal{N}(\mathcal{G})$ is a closed subspace of $E$.

The structural characterization of a pre-(C-DS) $\mathcal{G}$ (pre-(C-UDS) $\mathcal{G}$ of $\ast$-class) on its kernel space
$\mathcal{N}(\mathcal{G})$ is described in the following theorem (cf. Theorem \ref{delta}, the paragraph directly after its formulation, as well as \cite[Proposition 3.1.1]{knjigah} and the proofs of \cite[Lemma 2.2]{ku112}, \cite[Proposition 3.5.4]{knjigah}).

\begin{thm}\label{delta-point}
\begin{itemize}
\item[(i)] Let $\mathcal{G}$ be a pre-$($C-DS$)$, and let the space $L(\mathcal{N}(\mathcal{G}))$ be admissible.
Then, with $N=\mathcal{N}(\mathcal{G})$ and $G_1$ being the restriction of $\mathcal{G}$ to $N$ $(G_1=\mathcal{G}_{|N})$,
we have:
There exists a unique operators $T_0$, $T_1,\dots,T_m\in L(\mathcal{N}(\mathcal{G}))$ such that
$G_1=\sum_{j=1}^m\delta^{(j)}\otimes T_j$, $T_iC^i=(-1)^iT_0^{i+1}$, $i=0,1,\dots,m-1$ and
$T_0T_m=T_0^{m+2}=0$.
\item[(ii)]  Let $(M_{p})$ satisfy \emph{(M.3)}, let $\mathcal{G}$ be a pre-$($C-UDS$)$ of $\ast$-class, and let the space $\mathcal{N}(\mathcal{G})$ be barreled.
Then, with $N=\mathcal{N}(\mathcal{G})$ and $G_1$ being the restriction of $\mathcal{G}$ to $N$ $(G_1=\mathcal{G}_{|N})$,
we have:
There exists a unique set of operators $(T_{j})_{j\in {\mathbb N}_{0}}$ in $L(\mathcal{N}(\mathcal{G}))$ such that
$G_1=\sum_{j=0}^{\infty}\delta^{(j)}\otimes T_j$, $T_jC^j=(-1)^jT_0^{j+1}$, $j\in {\mathbb N}$ and the set $\{M_{j}T_{j}L^{j} : j\in {{\mathbb N}_{0}}\}$ is bounded in
$L(\mathcal{N}(\mathcal{G})),$ for some $L>0$ in the Beurling case, resp. for every $L>0$ in the Roumieu case.
\end{itemize}
\end{thm}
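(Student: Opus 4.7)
The plan is to reduce the claim to a point-support structure theorem applied to the restriction $G_1 := \mathcal{G}_{|N}$, and then to extract the algebraic identities among the coefficient operators $T_j$ from the semigroup law (C.S.1).

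First I would verify that $N = \mathcal{N}(\mathcal{G})$ is $\mathcal{G}(\varphi)$-invariant for every $\varphi$, so that $G_1$ is well defined as a member of $\mathcal{D}'(L(N))$ (resp.\ $\mathcal{D}'^{\ast}(L(N))$). From $C\mathcal{G} = \mathcal{G}C$ one has $Cx \in N$ whenever $x \in N$; combined with (C.S.1), for any $\psi \in \mathcal{D}_0$ (resp.\ $\mathcal{D}^{\ast}_0$) and arbitrary $\varphi$, $\mathcal{G}(\psi)\mathcal{G}(\varphi)x = \mathcal{G}(\psi \ast_0 \varphi)Cx = 0$, because $\psi \ast_0 \varphi \in \mathcal{D}_0$ and $Cx \in N$. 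Since $\mathcal{G} \in \mathcal{D}'_0(L(E))$ we have $G_1(\varphi) = 0$ whenever $\mathrm{supp}(\varphi) \subseteq (-\infty,0)$, while the definition of $N$ forces $G_1(\varphi) = 0$ for $\varphi \in \mathcal{D}_0$; splitting any test function with $0 \notin \mathrm{supp}(\varphi)$ into a negative and a positive piece shows $\mathrm{supp}(G_1) \subseteq \{0\}$. The appropriate point-support theorem then takes over: in (i) the admissibility of $L(N)$ yields a finite expansion $G_1 = \sum_{j=0}^{m} \delta^{(j)} \otimes T_j$ with unique $T_j \in L(N)$; in (ii) the hypothesis (M.3), together with the fact that the barreledness of $N$ makes $L(N)$ an SCLCS, lets one apply Theorem \ref{delta} to obtain $G_1 = \sum_{j=0}^{\infty} \delta^{(j)} \otimes T_j$ with the Beurling/Roumieu bound on $\{M_j L^j T_j\}$. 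Uniqueness of the $T_j$ follows from the linear independence of the derivatives of the Dirac measure (apply $G_1$ to $t^k/k!$ near the origin).

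The final step is to exploit (C.S.1). Direct differentiation of $(\varphi \ast_0 \psi)(t) = \int_0^{t} \varphi(t-s)\psi(s)\,ds$ gives
\[
(\varphi \ast_0 \psi)^{(k)}(0) = \sum_{i=0}^{k-1} \varphi^{(i)}(0)\,\psi^{(k-1-i)}(0),\qquad k \geq 1.
\]
Substituting the expansion of $G_1$ into $G_1(\varphi \ast_0 \psi)C = G_1(\varphi)G_1(\psi)$ and equating coefficients of $\varphi^{(i)}(0)\psi^{(j)}(0)$ yields the master identity $T_i T_j = -T_{i+j+1}C$ for all $i,j \geq 0$ (with $T_k := 0$ for $k>m$ in the finite case). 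Setting $j=0$ and iterating, using that $C$ commutes with each $T_j$ (inherited from $C\mathcal{G} = \mathcal{G}C$), gives $T_i C^i = (-1)^i T_0^{i+1}$. In case (i), choosing $(i,j) = (0,m)$ in the master relation forces $T_0 T_m = 0$; multiplying $T_m C^m = (-1)^m T_0^{m+1}$ on the left by $T_0$ then produces $T_0^{m+2} = 0$.

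The main obstacle is conceptual rather than computational: one must apply the point-support structure theorems with coefficient space $L(N)$ rather than with $E$ itself, and this is precisely where the admissibility hypothesis in (i) and the barreledness hypothesis in (ii) are needed (without them the classical structure theorems do not provide a clean Dirac-derivative expansion). Once $G_1$ is in hand, the identities relating $T_j$ and $C$ are a routine, if slightly intricate, exercise in matching coefficients in (C.S.1).
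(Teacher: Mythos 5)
Your proposal is correct and follows essentially the same route the paper indicates (it gives no detailed proof, only pointers to Theorem \ref{delta}, the admissibility discussion, and the Banach-space arguments of Kunstmann and \cite[Proposition 3.5.4]{knjigah}): restrict $\mathcal{G}$ to the invariant kernel $N$, observe $\mathrm{supp}(G_1)\subseteq\{0\}$, apply the point-support structure theorem with coefficient space $L(N)$ (admissibility in (i), barreledness of $N$ giving sequential completeness of $L(N)$ plus (M.3) for Theorem \ref{delta} in (ii)), and extract $T_iT_j=-T_{i+j+1}C$ from (C.S.1) by matching jets at the origin, which yields $T_iC^i=(-1)^iT_0^{i+1}$, $T_0T_m=0$ and $T_0^{m+2}=0$ exactly as you compute.
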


Let $\mathcal{G}\in\mathcal{D}_0'(L(E))$ ($\mathcal{G}\in\mathcal{D}_0'^{\ast}(L(E))$) satisfy (C.S.2), and let $T\in\mathcal{E}_0'$ ($T\in\mathcal{E}_0'^{\ast}$),
i.e., $T$ is a scalar-valued distribution (ultradistribution of $\ast$-class) with compact support contained in $[0,\infty)$.
Define
\[
G(T)x:=\Bigl\{(x,y) \in E\times E : \mathcal{G}(T*\varphi)x=\mathcal{G}(\varphi)y\;\mbox{ for all }\;\varphi\in\mathcal{D}_0 \ \ \bigl(\varphi\in\mathcal{D}^{\ast}_{0}\bigr) \Bigr\}.
\]
Then  it can be easily seen that $G(T)$ is a closed linear operator.
Following R. Shiraishi, Y. Hirata \cite{1964} and P. C. Kunstmann \cite{ku112}, we define the (infinitesimal) generator of a (C-DS) $\mathcal{G}$ by $A:=G(-\delta')$ (for some other approaches, see J. L. Lions \cite{li121}, \cite[Remark 3.1.20]{knjigah} and J. Peetre \cite{peet}, T. Ushijima \cite{ush1}).
Since for each $\psi\in\mathcal{D}$ ($\psi\in\mathcal{D}^{\ast}$), we have $\psi_+:=\psi\mathbf{1}_{[0,\infty)}\in\mathcal{E}_0'$ ($\mathcal{E}_{0}'^{*}$),
($\mathbf{1}_{[0,\infty)}$ stands for the characteristic function of $[0,\infty)$) the definition of $G(\psi_+)$ is clear.
Further on, if $\mathcal{G}$ is a (C-DS) ((C-UDS) of $\ast$-class), $T\in\mathcal{E}_0'$ ($T\in\mathcal{E}_{0}'^{*}$) and $\varphi\in\mathcal{D}$ ($\varphi\in\mathcal{D}^{\ast}$),
then ${\mathcal G}(\varphi)G(T)\subseteq G(T)\mathcal{G}(\varphi)$, $CG(T)\subseteq G(T)C$
and $\mathcal{R}(\mathcal{G})\subseteq D(G(T))$.
If $\mathcal{G}$ is a pre-(C-DS) (pre-(C-UDS) of $\ast$-class) and $\varphi$, $\psi\in\mathcal{D}$ ($\varphi$, $\psi\in\mathcal{D}^{\ast}$),
then the assumption $\varphi(t)=\psi(t)$, $t\geq 0$, implies $\mathcal{G}(\varphi)=\mathcal{G}(\psi)$.
As in the Banach space case, we can prove the following: Suppose that $\mathcal{G}$ is a (C-DS) ((C-UDS) of $\ast$-class). Then $G(\psi_+)C=\mathcal{G}(\psi)$, $\psi\in\mathcal{D}$ ($\psi\in\mathcal{D}^{\ast}$) and $C^{-1}AC=A.$ Furthermore, the following holds:

\begin{prop}\label{isto}
Let ${\mathcal G}$ be a (C-DS) ((C-UDS) of $\ast$-class), $S$, $T\in\mathcal{E}'_0$ ($S$, $T\in\mathcal{E}'^{\ast}_0$), $\varphi\in\mathcal{D}_0$ ($\varphi\in\mathcal{D}^{\ast}_0$), $\psi\in\mathcal{D}$
($\psi\in\mathcal{D}^{\ast}$) and $x\in E$.
Then we have:
\begin{itemize}
\item[(i)] $(\mathcal{G}(\varphi)x$, $\mathcal{G}(\overbrace{T*\cdots*T}^m*\varphi)x)\in G(T)^m$, $m\in\mathbb{N}$.
\item[(ii)] $G(S)G(T)\subseteq G(S*T)$ with $D(G(S)G(T))=D(G(S*T))\cap D(G(T))$, and $G(S)+G(T)\subseteq G(S+T)$.
\item[(iii)] $(\mathcal{G}(\psi)x$, $\mathcal{G}(-\psi')x-\psi(0)Cx)\in G(-\delta')$.
\item[(iv)] If $\mathcal{G}$ is dense, then its generator is densely defined.
\end{itemize}
\end{prop}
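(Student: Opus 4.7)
The plan is to reduce all four claims to unwindings of the defining identity $\mathcal{G}(T*\varphi)x=\mathcal{G}(\varphi)y$ for $\varphi\in \mathcal{D}_{0}$ (or $\mathcal{D}^{\ast}_{0}$), which characterizes $(x,y)\in G(T)$, together with the semigroup relation (C.S.1), the commutation $C\mathcal{G}=\mathcal{G}C$, and the observation that $T*\varphi\in \mathcal{D}_{0}$ whenever $T\in \mathcal{E}'_{0}$ and $\varphi\in \mathcal{D}_{0}$ (similarly in the ultradistribution case). For brevity I sketch the argument only in the distribution case; the ultradistribution case is formally identical.

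For claim (i) I would induct on $m$. For $m=1$, the task is to verify $\mathcal{G}(T*\eta)\mathcal{G}(\varphi)x=\mathcal{G}(\eta)\mathcal{G}(T*\varphi)x$ for every $\eta\in \mathcal{D}_{0}$; applying (C.S.1) to both sides reduces this to $\mathcal{G}((T*\eta)*_{0}\varphi)Cx=\mathcal{G}(\eta *_{0}(T*\varphi))Cx$, which holds because $*_{0}$ coincides with ordinary convolution on functions supported in $[0,\infty)$ and ordinary convolution is associative and commutative. The inductive step is then the base case applied to $T^{*m}*\varphi\in \mathcal{D}_{0}$.

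For (ii), the inclusion $G(S)G(T)\subseteq G(S*T)$ is a direct chain: taking $(x,y)\in G(T)$, $(y,z)\in G(S)$, and $\varphi\in \mathcal{D}_{0}$, the fact that $S*\varphi\in \mathcal{D}_{0}$ gives $\mathcal{G}((S*T)*\varphi)x=\mathcal{G}(T*(S*\varphi))x=\mathcal{G}(S*\varphi)y=\mathcal{G}(\varphi)z$. The main bookkeeping point is the domain equality: the inclusion $\subseteq$ is automatic, and for the reverse inclusion, given $x\in D(G(S*T))\cap D(G(T))$ with $G(T)x=y$ and $G(S*T)x=z$, the same chain \emph{read in reverse} yields $\mathcal{G}(S*\varphi)y=\mathcal{G}(\varphi)z$ for every $\varphi\in \mathcal{D}_{0}$, which is exactly the defining property $(y,z)\in G(S)$; so $y\in D(G(S))$ and $G(S)G(T)x=z$. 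The inclusion $G(S)+G(T)\subseteq G(S+T)$ follows by linearity in the distribution argument.

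For (iii), I rewrite the target using $(-\delta')*\varphi=-\varphi'$ so that the claim becomes $\mathcal{G}(-\varphi')\mathcal{G}(\psi)x=\mathcal{G}(\varphi)\bigl(\mathcal{G}(-\psi')x-\psi(0)Cx\bigr)$ for every $\varphi\in \mathcal{D}_{0}$. The left side equals $\mathcal{G}((-\varphi')*_{0}\psi)Cx$ by (C.S.1), so the whole claim reduces to the scalar functional identity $(-\varphi')*_{0}\psi=-\psi(0)\varphi+\varphi *_{0}(-\psi')$ on $[0,\infty)$, which I would establish by a one-line integration by parts using crucially that $\varphi(0)=0$ for $\varphi\in \mathcal{D}_{0}$; applying (C.S.1) in reverse to the second summand and invoking $\mathcal{G}(\varphi)C=C\mathcal{G}(\varphi)$ then reassembles the right side. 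I expect the only delicate step to be tracking the sign and boundary term in this integration by parts: the boundary contribution $-\psi(0)\varphi(t)$ is precisely what produces the $-\psi(0)Cx$ correction appearing in the statement. Claim (iv) is then immediate, because $\mathcal{R}(\mathcal{G})\subseteq D(G(T))$ for every $T\in \mathcal{E}'_{0}$, and the choice $T=-\delta'$ gives $\mathcal{R}(\mathcal{G})\subseteq D(A)$, so density of $\mathcal{R}(\mathcal{G})$ in $E$ forces $D(A)$ to be dense.
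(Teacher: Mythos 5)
Your proof is correct and is essentially the argument the paper has in mind: the paper gives no written proof, simply invoking the Banach space case, and your direct unwinding of the defining relation $\mathcal{G}(T*\varphi)x=\mathcal{G}(\varphi)y$ via (C.S.1), the commutativity and associativity of convolution on $[0,\infty)$-supported objects, and the identity $\varphi'*_{0}\psi-\varphi *_{0}\psi'=\psi(0)\varphi-\varphi(0)\psi$ (with $\varphi(0)=0$ for $\varphi\in\mathcal{D}_{0}$, matching the identity used in the proof of Proposition \ref{kisinski}) is exactly that standard argument. All four items check out, including the only nontrivial bookkeeping point, the reverse inclusion $D(G(S*T))\cap D(G(T))\subseteq D(G(S)G(T))$, which you handle correctly by reading the chain backwards with the test function $S*\varphi\in\mathcal{D}_{0}$.
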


The assertions (ii)-(vi) of \cite[Proposition 3.1.2]{knjigah} can be reformulated for pre-(C-DS)'s (pre-(C-UDS)'s of $\ast$-class) in locally convex spaces; here it is only worth noting that for any barreled space $E$ and for any bounded subset $B$ of $E^*$ the mapping
$x\mapsto \sup_{x^{*}\in B}|\langle x^{\ast},x\rangle |,$ $x\in E$ is a continuous seminorm on $E$ (cf. also the proof of \cite[Theorem 2.3]{ush1}) and that the reflexivity of state space $E$ (recall that the sequential completeness of $E$ is our standing hypothesis)  implies that the spaces $E,$ $E^*$ and $E^{**}=E$ are both barreled and sequentially complete.

\begin{prop}\label{kuki}
Let $\mathcal{G}$ be a pre-(C-DS) (pre-(C-UDS) of $\ast$-class). % $F:=E/\mathcal{N}(\mathcal{G})$
%and let $q$ be the corresponding canonical mapping $q:E\to F$.
 Then the following holds:
\begin{itemize}
%\item[(i)] Let $H\in L(\mathcal{D}:L(F))$ be defined by $q\mathcal{G}(\varphi):=H(\varphi)q$
%for all $\varphi\in\mathcal{D}$ and let $\tilde{C}$ be a linear operator in $F$ defined by $\tilde{C}q:=qC$.
%Then $\tilde{C}\in L(F)$ and $\tilde{C}$ is injective.
%Moreover, $H$ is a $(\tilde{C}$-DS) in $F$.
\item[(i)] $C(\overline{\langle\mathcal{R}(\mathcal{G})\rangle})\subseteq\overline{\mathcal{R}(\mathcal{G})}$,
where $\langle\mathcal{R}(\mathcal{G})\rangle$
denotes the linear span of $\mathcal{R}(\mathcal{G})$.
\item[(ii)] Assume $\mathcal{G}$ is not dense and
$\overline{C\mathcal{R}(\mathcal{G})}=\overline{\mathcal{R}(\mathcal{G})}$.
Put $R:=\overline{\mathcal{R}(\mathcal{G})}$ and $H:=\mathcal{G}_{|R}$.
Then $H$ is a dense pre-($C_1$-DS)  (pre-($C_1$-UDS) of $\ast$-class) on $R$ with $C_1=C_{|R}$.
\item[(iii)] Assume $\overline{R(C)}=E$ and $E$ is barreled.
Then the dual $\mathcal{G}(\cdot)^*$ is a pre-($C^*$-DS) (pre-($C^*$-UDS) of $\ast$-class) on $E^*$
and $\mathcal{N}(\mathcal{G}^*)=\overline{\mathcal{R}(\mathcal{G})}^{\circ}$.
\item[(iv)] If $E$ is reflexive and $\overline{R(C)}=E$,
then $\mathcal{N}(\mathcal{G})=\overline{\mathcal{R}(\mathcal{G}^*)}^{\circ}$.
\item[(v)] Assume $\overline{R(C)}=E$ and $E$ is barreled.
Then $\mathcal{G}^*$ is a ($C^*$-DS) (($C^*$-UDS) of $\ast$-class) in $E^*$ iff $\mathcal{G}$ is a dense pre-(C-DS) (pre-(C-UDS) of $\ast$-class).
If $E$ is reflexive, then $\mathcal{G}^*$ is a dense pre-($C^*$-DS) (pre-($C^*$-UDS) of $\ast$-class) in $E^*$ iff $\mathcal{G}$ is a (C-DS) ((C-UDS) of $\ast$-class).
\end{itemize}
\end{prop}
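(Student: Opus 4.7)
The plan is to handle the five items in order, with (i) providing the technical core and the remaining parts following standard duality patterns adapted to the $C$-regularized locally convex setting.

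For (i), I would fix a regularizing sequence $(\rho_{n})$ in $\mathcal{D}_{0}$ (respectively, in $\mathcal{D}^{\ast}_{0}$), meaning $\rho_{n}\in\mathcal{D}$ with $\mathrm{supp}(\rho_{n})\subseteq[0,1/n]$ and $\rho_{n}*_{0}\varphi\to\varphi$ in $\mathcal{D}$ for every $\varphi\in\mathcal{D}_{0}$. Given $y=\mathcal{G}(\varphi)x\in\mathcal{R}(\mathcal{G})$, property (C.S.1) gives $\mathcal{G}(\rho_{n})y=\mathcal{G}(\rho_{n}*_{0}\varphi)Cx$, and bounded equicontinuity of $\mathcal{G}$ together with $C\mathcal{G}=\mathcal{G}C$ drives the right-hand side to $\mathcal{G}(\varphi)Cx=Cy$. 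Linearity extends this to arbitrary $y\in\langle\mathcal{R}(\mathcal{G})\rangle$, exhibiting $Cy$ as a limit of elements $\mathcal{G}(\rho_{n})y\in\mathcal{R}(\mathcal{G})$; continuity of $C$ then transports the statement across the closure.

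For (ii), item (i) already gives $C(R)\subseteq R$ with $R:=\overline{\mathcal{R}(\mathcal{G})}$, so $C_{1}:=C_{|R}\in L(R)$ is injective, and the identity $\mathcal{G}(\varphi)\mathcal{G}(\psi)x=\mathcal{G}(\varphi*_{0}\psi)Cx\in\mathcal{R}(\mathcal{G})$ combined with continuity shows that each $\mathcal{G}(\varphi)$ preserves $R$. The semigroup law, the commutation $C_{1}H=HC_{1}$, and bounded equicontinuity all descend from $\mathcal{G}$ to $H:=\mathcal{G}_{|R}$, so only density of $\mathcal{R}(H)$ in $R$ requires attention. Applying the approximation $\mathcal{G}(\rho_{n})y\to Cy$ from (i) to an arbitrary $y\in\mathcal{R}(\mathcal{G})\subseteq R$ shows $Cy\in\overline{\mathcal{R}(H)}$, and the hypothesis $\overline{C\mathcal{R}(\mathcal{G})}=\overline{\mathcal{R}(\mathcal{G})}=R$ then forces $\overline{\mathcal{R}(H)}=R$.

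For (iii), taking adjoints in (C.S.1) and using commutativity of $*_{0}$ yields $\mathcal{G}^{*}(\varphi*_{0}\psi)C^{*}=\mathcal{G}^{*}(\varphi)\mathcal{G}^{*}(\psi)$, while injectivity of $C^{*}$ follows from $\overline{R(C)}=E$ via Hahn--Banach. That each $\mathcal{G}(\varphi)^{*}$ actually lies in $L(E^{*})$ and that $\{\mathcal{G}^{*}(\varphi):\varphi\in B\}$ is equicontinuous for each bounded $B\subseteq\mathcal{D}_{0}$ (respectively, $\mathcal{D}^{\ast}_{0}$) is where barreledness of $E$ is used, through the uniform boundedness principle applied to the scalar family $\{\langle\mathcal{G}(\varphi)x,x^{*}\rangle:\varphi\in B\}$. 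The identification $\mathcal{N}(\mathcal{G}^{*})=\overline{\mathcal{R}(\mathcal{G})}^{\circ}$ then reduces to the pointwise equality $N(\mathcal{G}(\varphi)^{*})=R(\mathcal{G}(\varphi))^{\circ}$ and the standard interchange $\bigcap_{\varphi}A_{\varphi}^{\circ}=(\bigcup_{\varphi}A_{\varphi})^{\circ}$.

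Item (iv) is obtained by applying (iii) to the pre-($C^{*}$-DS) $\mathcal{G}^{*}$ on the reflexive, hence barreled, space $E^{*}$: injectivity of $C$ combined with reflexive duality gives $\overline{R(C^{*})}=E^{*}$, and since $\mathcal{G}^{**}=\mathcal{G}$ under the canonical identification, one reads off $\mathcal{N}(\mathcal{G})=\overline{\mathcal{R}(\mathcal{G}^{*})}^{\circ}$. Both equivalences in (v) then drop out by combining (iii) and (iv) with the Hahn--Banach equivalence $\overline{M}=E\iff M^{\circ}=\{0\}$ (and its analog in $E^{*}$). The main obstacle in the whole proposal is verifying that $\mathcal{G}^{*}$ inherits bounded equicontinuity in (iii); every other step is either a direct duality computation or a continuity/density extension of the kind already used around Lemma \ref{lemma1}.
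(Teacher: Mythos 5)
Your proposal is correct and takes essentially the same route as the paper, which proves Proposition \ref{kuki} simply by carrying over the Banach-space argument of \cite[Proposition 3.1.2]{knjigah} (a regularizing-sequence computation $\mathcal{G}(\rho_{n})y\to Cy$ for (i)--(ii), and adjoint/polar identities such as $\mathcal{N}(\mathcal{G}^{*})=\mathcal{R}(\mathcal{G})^{\circ}$ for (iii)--(v)), with barreledness invoked exactly as you do -- to make $x\mapsto\sup_{x^{*}\in B}|\langle x^{*},x\rangle|$ a continuous seminorm and $E^{*}$ sequentially complete -- and reflexivity to make $E$, $E^{*}$ and $E^{**}=E$ barreled and sequentially complete. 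Your write-up merely supplies in full the details the paper leaves to that reference, so there is nothing to correct.
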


%\begin{rem}\label{fre}
%It is not clear whether the assertion of \cite[Proposition 3.1.2(i)]{knjigah} admits a satisfactory reformulation in locally convex spaces.
%\end{rem}

Now we shall state and prove an extension of \cite[Proposition 2]{ki90} for pre-(C-DS)'s (pre-(C-UDS)'s of $\ast$-class) in locally convex spaces.

\begin{prop}\label{kisinski}
Suppose that ${\mathcal G}\in {\mathcal D}^{\prime}_{0}(L(E))$ (${\mathcal G}\in {\mathcal D}^{\prime \ast}_{0}(L(E))$) and ${\mathcal G}(\varphi)C=C{\mathcal G}(\varphi),$ $\varphi \in {\mathcal D}$ ($\varphi \in {\mathcal D}^{\ast}$).
Then ${\mathcal G}$ satisfies \emph{(C.S.1)} iff
\begin{equation}\label{polish}
{\mathcal G}\bigl(\varphi^{\prime}\bigr){\mathcal G}(\psi)-{\mathcal G}(\varphi){\mathcal G}\bigl(\psi^{\prime}\bigr)=\psi(0){\mathcal G}(\varphi)C-\varphi(0){\mathcal G}
(\psi)C,\quad \varphi,\ \psi \in {\mathcal D} \ \ \bigl(  \varphi,\ \psi \in {\mathcal D}^{\ast} \bigr).
\end{equation}
In particular, ${\mathcal G}$ is a pre-(C-DS) (pre-(C-UDS) of $\ast$-class) iff ${\mathcal G}$ is boundedly equicontinuous and \emph{(\ref{polish})} holds.
\end{prop}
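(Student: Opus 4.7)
The plan is to establish both directions via the Leibniz-type identity
\[
\varphi' *_0 \psi - \varphi *_0 \psi' = \psi(0)\varphi - \varphi(0)\psi,
\]
which I would obtain by differentiating $(\varphi *_0 \psi)(t)=\int_0^t\varphi(t-s)\psi(s)\,ds$ under the integral sign, exploiting the commutativity $\varphi*_0\psi = \psi*_0\varphi$ (via the substitution $s\mapsto t-s$) to write $(\varphi *_0 \psi)' = \varphi(0)\psi + \varphi'*_0\psi$ and also $(\varphi *_0 \psi)' = \psi(0)\varphi + \varphi*_0\psi'$, then subtracting the two. The direction (C.S.1)$\Rightarrow$(\ref{polish}) then follows by applying $\mathcal{G}(\cdot)C$ to this identity and using (C.S.1) to convert each convolution into a product of values of $\mathcal{G}$.

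For the converse, I would set
\[
T(\varphi,\psi):=\mathcal{G}(\varphi *_0 \psi)C-\mathcal{G}(\varphi)\mathcal{G}(\psi),
\]
a bilinear form separately continuous from $\mathcal{D}\times\mathcal{D}$ (resp.\ $\mathcal{D}^{\ast}\times\mathcal{D}^{\ast}$) into $L(E)$. Combining (\ref{polish}) with the Leibniz identity yields
\[
T(\varphi',\psi)=T(\varphi,\psi'),
\]
and since $\mathrm{supp}(\mathcal{G})\subseteq[0,\infty)$ and a direct check gives $\varphi*_0\psi\equiv 0$ on $[0,\infty)$ whenever either of $\varphi,\psi$ is supported in $(-\infty,0)$, we obtain $T(\varphi,\psi)=0$ in either of those cases.

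The main remaining step is to upgrade these two properties to the vanishing of $T$. I would scalarize: for each $x\in E$ and $x^{\ast}\in E^{\ast}$, the separately continuous form $T_{x,x^{\ast}}(\varphi,\psi):=\langle x^{\ast},T(\varphi,\psi)x\rangle$ extends, by the Schwartz kernel theorem for $\mathcal{D}$ (respectively, the analogous kernel theorem for the nuclear space $\mathcal{D}^{\ast}$), to a scalar (ultra)distribution $\widetilde{T}_{x,x^{\ast}}$ on $\mathbb{R}^2$. The derivative symmetry translates to $(\partial_s-\partial_t)\widetilde{T}_{x,x^{\ast}}=0$, so after the linear change of variables $u=s+t$, $v=s-t$, we obtain $\widetilde{T}_{x,x^{\ast}}=\pi^{\ast}H_{x,x^{\ast}}$ for some scalar (ultra)distribution $H_{x,x^{\ast}}$ on $\mathbb{R}$, with $\pi(s,t):=s+t$. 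The support property forces $\mathrm{supp}(\widetilde{T}_{x,x^{\ast}})\subseteq[0,\infty)^2$, and testing against $\Phi_{\eta}(s,t):=\chi(s)\eta(s+t)$, where $\chi$ is a normalized bump supported in $(-\infty,0)$ (available in both the $\mathcal{D}$ and $\mathcal{D}^{\ast}$ settings) and $\eta$ is arbitrary, gives $H_{x,x^{\ast}}(\eta)=\widetilde{T}_{x,x^{\ast}}(\Phi_{\eta})=0$ because $\mathrm{supp}(\Phi_{\eta})\subseteq\{s<0\}$ avoids $\mathrm{supp}(\widetilde{T}_{x,x^{\ast}})$. Hence $H_{x,x^{\ast}}\equiv 0$, so $\widetilde{T}_{x,x^{\ast}}\equiv 0$, and since $E^{\ast}$ separates points on $E$, we conclude $T\equiv 0$, which is (C.S.1). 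The final assertion of the proposition follows immediately from the equivalence and the definition of pre-(C-DS)/pre-(C-UDS).

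The principal technical point will be the invocation of the kernel theorem in the Roumieu ultradistribution setting; this can alternatively be bypassed via the density of the algebraic tensor product $\mathcal{D}^{\ast}\otimes\mathcal{D}^{\ast}$ in $\mathcal{D}^{\ast}(\mathbb{R}^2)$ together with a partition of unity to handle the localization, but the kernel-theoretic formulation is more concise.
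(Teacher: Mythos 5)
Your proof is correct, but it takes a genuinely different route from the paper's. The paper runs Kisy\'nski's elementary computation: choosing $a>0$ with $\mathrm{supp}(\psi)\subseteq(-\infty,a]$, it writes $(\varphi*_0\psi)(t)=\int_0^a[\varphi(t-s)\psi(s)-\varphi(-s)\psi(t+s)]\,ds$ for $t\geq0$, applies (\ref{polish}) to the translated pair $(\varphi(\cdot-s),\psi(\cdot+s))$ under the integral sign, recognizes the integrand as $-\frac{d}{ds}\bigl[{\mathcal G}(\varphi(\cdot-s)){\mathcal G}(\psi(\cdot+s))x\bigr]$, and integrates in $s$; the boundary term at $s=a$ vanishes because $\mathrm{supp}({\mathcal G})\subseteq[0,\infty)$, and the only analytic inputs are continuity of ${\mathcal G}$ and elementary translation calculus in ${\mathcal D}$ (${\mathcal D}^{\ast}$), so the argument is uniform across the distribution and both ultradistribution cases and is reused verbatim in the proof of Proposition \ref{kisinski-uniqueness}. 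You instead symmetrize to $T(\varphi',\psi)=T(\varphi,\psi')$, pass to a kernel $\widetilde T$ on ${\mathbb R}^2$, solve $(\partial_s-\partial_t)\widetilde T=0$ by pullback along $s+t$, and kill the pullback using the quadrant support property; this is conceptually transparent, cleanly separates the algebra from the support argument, and your insistence on supports in the \emph{open} half-line $(-\infty,0)$ correctly dodges the boundary-at-zero subtlety (a test function merely vanishing on $[0,\infty)$ is flat at $0$ but not trivially annihilated by ${\mathcal G}$, a real issue in the ultradistribution case). The price is exactly what you flag: in the $\ast$-cases you must import Komatsu's kernel theorem (delicate for the Roumieu class), density of ${\mathcal D}^{\ast}\otimes{\mathcal D}^{\ast}$ in ${\mathcal D}^{\ast}({\mathbb R}^2)$, invariance of the classes under linear changes of variables, and the structure theorem for (ultra)distributions annihilated by a directional derivative --- all available under the standing hypotheses on $(M_p)$, but far heavier than the paper's one-line integration in the translation parameter; both proofs share, and neither needs to elaborate, the fact that $\varphi*_0\psi\in{\mathcal D}$ (${\mathcal D}^{\ast}$), which in the ultradifferentiable setting rests on (M.1)--(M.2). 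Your closing reduction of the ``pre-(C-DS)'' statement to the equivalence is as immediate as the paper's.
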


\begin{proof}
Steps of the proof of the proposition are the same as that of \cite[Proposition 2]{ki90}; because of its significance, we shall include all relevant details of the proof.
If ${\mathcal G}$ satisfies (C.S.1), then (\ref{polish}) follows immediately from (C.S.1) and the equality $\varphi^{\prime}\ast_{0} \psi -\varphi \ast_{0} \psi^{\prime}=\psi(0)\varphi -\varphi(0)\psi ,$ $\varphi,\ \psi \in {\mathcal D}$ ($\varphi,\ \psi \in {\mathcal D}^{\ast}$).
Suppose now that (\ref{polish}) holds, $\varphi,\ \psi \in {\mathcal D}$ ($\varphi,\ \psi \in {\mathcal D}^{\ast}$), $a>0$ and supp$(\psi)\subseteq (-\infty,a].$
Since ${\mathcal G}\in {\mathcal D}^{\prime}_{0}(L(E))$ (${\mathcal G}\in {\mathcal D}^{\prime \ast}_{0}(L(E))$), and the function $t\mapsto \int^{a}_{0}[\varphi(t-s)\psi(s)-\varphi(-s)\psi(t+s)]\, ds,$ $t\in {\mathbb R}$ belongs to ${\mathcal D}$ (${\mathcal D}^{\ast}$) with
$(\varphi \ast_{0}\psi)(t)=\int^{a}_{0}[\varphi(t-s)\psi(s)-\varphi(-s)\psi(t+s)]\, ds,$ $t\geq 0,$ we have
\begin{align}
\notag {\mathcal G}(\varphi \ast_{0}\psi)Cx&={\mathcal G}\int^{a}_{0}\bigl[\varphi(\cdot-s)\psi(s)-\varphi(-s)\psi(\cdot+s)\bigr]Cx\, ds
\\\notag &=\int^{a}_{0}\bigl[\psi(s){\mathcal G}(\varphi(\cdot-s))Cx-\varphi(-s){\mathcal G}(\psi(\cdot+s))Cx\bigr]\, ds
\\\label{polish-1} &=\int^{a}_{0}\Bigl[{\mathcal G}\bigl(\varphi^{\prime}(\cdot-s)\bigr)
{\mathcal G}(\psi(\cdot +s))x-
{\mathcal G}(\varphi (\cdot-s))
{\mathcal G}\bigl(\psi^{\prime}(\cdot +s)\bigr)x\Bigr]\, ds
\\\label{polish-2} & =-\int^{a}_{0}\frac{d}{ds}\bigl[{\mathcal G}(\varphi(\cdot-s)){\mathcal G}(\psi(\cdot +s))x\bigr]\, ds
\\\notag &={\mathcal G}(\varphi) {\mathcal G}(\psi)x-{\mathcal G}(\varphi(\cdot -a)){\mathcal G}(\psi (\cdot+a))x
\\\notag &={\mathcal G}(\varphi) {\mathcal G}(\psi)x-{\mathcal G}(\varphi(\cdot -a))0x={\mathcal G}(\varphi) {\mathcal G}(\psi)x,
\end{align}
for any $x\in E$ and $\varphi,\ \psi \in {\mathcal D}$ ($\varphi,\ \psi \in {\mathcal D}^{\ast}$),
where (\ref{polish-1}) follows from an application of (\ref{polish}), and  (\ref{polish-2}) from an elementary argumentation involving the continuity of ${\mathcal G}$ as well as the facts that for each function $\zeta \in {\mathcal D}$ ($\zeta \in {\mathcal D}^{\ast}$)
we have that $\lim_{h\rightarrow 0}(\tau_{h}\zeta)=\zeta$ in ${\mathcal D}$ (${\mathcal D}^{\ast}$), $\lim_{h\rightarrow 0}\frac{1}{h}(\tau_{h}\zeta -\zeta)=\zeta^{\prime}$ in ${\mathcal D}$ (${\mathcal D}^{\ast}$)
and that the set $\{\tau_{h}\zeta : |h|\leq 1\}$ is bounded in ${\mathcal D}$ (${\mathcal D}^{\ast}$).
The proof of proposition is thereby complete.
\end{proof}

\begin{thm}\label{fundamentalna}
Suppose that ${\mathcal G}\in {\mathcal D}^{\prime}_{0}(L(E))$ (${\mathcal G}\in {\mathcal D}^{\prime \ast}_{0}(L(E))$), ${\mathcal G}(\varphi)C=C{\mathcal G}(\varphi),$ $\varphi \in {\mathcal D}$ ($\varphi \in {\mathcal D}^{\ast}$)
and $A$ is a closed linear operator on $E$ satisfying that $\mathcal{G}(\varphi)A\subseteq A{\mathcal G}(\varphi),$ $\varphi \in {\mathcal D}$ ($\varphi \in {\mathcal D}^{\ast}$) and
\begin{equation}\label{dkenk}
A\mathcal{G}(\varphi)x=\mathcal{G}\bigl(-\varphi'\bigr)x-\varphi(0)Cx,\quad x\in E,\ \varphi \in {\mathcal D} \ \ (\varphi \in {\mathcal D}^{\ast}).
\end{equation}
Then the following holds:
\begin{itemize}
\item[(i)] ${\mathcal G}$ satisfies \emph{(C.S.1)}.
\item[(ii)] If ${\mathcal G}$ is boundedly equicontinuous and satisfies \emph{(C.S.2)}, then ${\mathcal G}$ is a (C-DS) ((C-UDS) of $\ast$-class) generated by $C^{-1}AC.$
\item[(iii)] Consider the distribution case. If $E$ is admissible, then the condition \emph{(C.S.2)} automatically holds for ${\mathcal G}$.
\end{itemize}
\end{thm}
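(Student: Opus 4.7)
For (i), the strategy is to apply Proposition \ref{kisinski}, which reduces (C.S.1) to (\ref{polish}). I would use (\ref{dkenk}) with the vector $\mathcal{G}(\psi)x$ in place of $x$ and test function $\varphi$, and also with $x$ and test function $\psi$ (then premultiply by $\mathcal{G}(\varphi)$); subtracting produces
\begin{align*}
\mathcal{G}\bigl(\varphi'\bigr)\mathcal{G}(\psi)x & -\mathcal{G}(\varphi)\mathcal{G}\bigl(\psi'\bigr)x \\
& = -A\mathcal{G}(\varphi)\mathcal{G}(\psi)x + \mathcal{G}(\varphi)A\mathcal{G}(\psi)x - \varphi(0)C\mathcal{G}(\psi)x + \psi(0)\mathcal{G}(\varphi)Cx.
\end{align*}
The hypothesis $\mathcal{G}(\varphi)A\subseteq A\mathcal{G}(\varphi)$, applicable because $\mathcal{G}(\psi)x\in D(A)$ is implicit in (\ref{dkenk}), makes the two middle terms cancel, and the commutation $C\mathcal{G}=\mathcal{G}C$ converts the boundary terms into the right-hand side of (\ref{polish}); Proposition \ref{kisinski} then delivers (C.S.1).

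For (ii), once (i) supplies (C.S.1), bounded equicontinuity together with (C.S.2) makes $\mathcal{G}$ a (C-DS) (resp.\ (C-UDS) of $\ast$-class). Writing $\widetilde{A}:=G(-\delta')$ for its generator, I verify $A\subseteq\widetilde{A}$ directly: for $x\in D(A)$ and $\varphi\in\mathcal{D}_0$ (resp.\ $\mathcal{D}^{\ast}_{0}$), the vanishing $\varphi(0)=0$ combined with the hypotheses gives $\mathcal{G}(\varphi)Ax=A\mathcal{G}(\varphi)x=\mathcal{G}(-\varphi')x$, which is precisely $(x,Ax)\in\widetilde{A}$. Since $C^{-1}\widetilde{A}C=\widetilde{A}$ holds for the generator of any (C-DS), this yields $C^{-1}AC\subseteq\widetilde{A}$. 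The main obstacle is the reverse inclusion $\widetilde{A}\subseteq C^{-1}AC$: from $(x,y)\in\widetilde{A}$ one extracts $A\mathcal{G}(\varphi)x=\mathcal{G}(\varphi)y$ for $\varphi\in\mathcal{D}_0$, and upgrades this via $CA\subseteq AC$ to $A\mathcal{G}(\varphi)Cx=\mathcal{G}(\varphi)Cy$; a regularization argument exploiting the closedness of $A$ must then be deployed to pass from this identity on $\mathcal{R}(\mathcal{G})$ to the desired conclusion $Cx\in D(A)$ with $ACx=Cy$.

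For (iii), suppose $x\in\mathcal{N}(\mathcal{G})$ and define $G_x\in\mathcal{D}'(E)$ by $G_x(\varphi):=\mathcal{G}(\varphi)x$. Because $\mathcal{G}\in\mathcal{D}'_0(L(E))$ one has $\mathrm{supp}(G_x)\subseteq[0,\infty)$; the defining property of $\mathcal{N}(\mathcal{G})$ makes $G_x$ vanish on $\mathcal{D}_{(0,\infty)}\subseteq\mathcal{D}_0$, so $\mathrm{supp}(G_x)\subseteq\{0\}$. Admissibility of $E$ then provides $N\in\mathbb{N}_0$ and $x_0,\dots,x_N\in E$ with $G_x=\sum_{i=0}^{N}\delta^{(i)}\otimes x_i$, and testing against $\varphi\in\mathcal{D}$ having a single nonzero Taylor coefficient at $0$ shows $x_j\in D(A)$ for every $j$. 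Substituting this representation into (\ref{dkenk}) and equating coefficients of $\varphi^{(k)}(0)$ on both sides produces the cascade
\[
Ax_0=-Cx,\qquad Ax_k=x_{k-1}\ \ (1\leq k\leq N),\qquad x_N=0,
\]
from which back-substitution starting at $x_N=0$ forces every $x_i=0$ and therefore $Cx=0$; the injectivity of $C$ gives $x=0$, so (C.S.2) holds.
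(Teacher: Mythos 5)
Parts (i) and (iii) of your proposal are correct and essentially match the paper. In (i) you derive exactly the identity (\ref{nahari}) that the paper derives (apply (\ref{dkenk}) twice, cancel the middle terms via $\mathcal{G}(\varphi)A\subseteq A\mathcal{G}(\varphi)$, commute $C$ past $\mathcal{G}$) and then invoke Proposition \ref{kisinski}; this is the paper's argument verbatim. In (iii) the paper only cites the Banach space literature, and your cascade $Ax_0=-Cx$, $Ax_k=x_{k-1}$, $x_N=0$ correctly reconstructs that standard argument: the coefficient of $\varphi^{(N+1)}(0)$ kills $x_N$, back-substitution kills the rest, and injectivity of $C$ gives $x=0$. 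The easy half of (ii), namely $A\subseteq G(-\delta')$ and hence $C^{-1}AC\subseteq G(-\delta')$ via $C^{-1}G(-\delta')C=G(-\delta')$, is also fine and corresponds to the paper's "checked at once" remark.

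The genuine gap is the reverse inclusion $G(-\delta')\subseteq C^{-1}AC$ in (ii), where you defer to "a regularization argument exploiting the closedness of $A$." That plan would require a delta sequence $(\rho_n)\subseteq\mathcal{D}_0$ with $\mathcal{G}(\rho_n)x\to Cx$ and $\mathcal{G}(\rho_n)y\to Cy$, and for a general (possibly non-dense) (C-DS) such limits need not exist for arbitrary $x\in E$: the continuity property $(d_4)$ of Theorem \ref{wu-tang}(iii), which gives $u_x(0)=Cx$, is available only for $x\in\mathcal{R}(\mathcal{G})$, and $D(G(-\delta'))$ is not contained in $\mathcal{R}(\mathcal{G})$ or its closure in general. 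The paper's proof avoids limits entirely, and this is precisely where (C.S.2) is consumed: starting from $A\mathcal{G}(\zeta)x=\mathcal{G}(\zeta)y$ for $\zeta\in\mathcal{D}_0$ (valid because $\zeta(0)=0$), one uses (C.S.1) and the commutation relations to obtain $\mathcal{G}(\zeta)\bigl[AC\mathcal{G}(\eta)x-C\mathcal{G}(\eta)y\bigr]=0$ for all $\zeta\in\mathcal{D}_0$ and all $\eta\in\mathcal{D}$; then (C.S.2) yields $AC\mathcal{G}(\eta)x=C\mathcal{G}(\eta)y$, and injectivity of $C$ upgrades the identity $A\mathcal{G}(\eta)x=\mathcal{G}(\eta)y$ from $\mathcal{D}_0$ to \emph{all} $\eta\in\mathcal{D}$. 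This extension is the missing idea: choosing $\eta$ with $\eta(0)=1$, equation (\ref{dkenk}) gives $Cx=\mathcal{G}(-\eta')x-\mathcal{G}(\eta)y$, which lies in $D(A)$ because (\ref{dkenk}) forces $R(\mathcal{G}(\varphi))\subseteq D(A)$ for every $\varphi$; applying $A$ and using $A\mathcal{G}(-\eta')x=\mathcal{G}(-\eta')y$ together with $A\mathcal{G}(\eta)y=\mathcal{G}(-\eta')y-Cy$ gives $ACx=Cy$ purely algebraically, with no closedness or regularization needed. Without this (C.S.2)-based extension step, or a proven substitute for it, your sketch of (ii) does not close.
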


\begin{proof}
Let $\varphi,\ \psi \in {\mathcal D}$ ($\varphi,\ \psi \in {\mathcal D}^{\ast}$) be fixed. Using the inclusion $\mathcal{G}(\varphi)A\subseteq A{\mathcal G}(\varphi)$ and the equality (\ref{dkenk}), we get
that
$
A{\mathcal G}(\varphi){\mathcal G}(\psi)x={\mathcal G}(\varphi)A{\mathcal G}(\psi)x,
$ $x\in E,$ i.e.,
\begin{equation}\label{nahari}
{\mathcal G}\bigl( -\varphi^{\prime}\bigr){\mathcal G}(\psi)x-\varphi(0)C{\mathcal G}(\psi)x={\mathcal G}(\varphi)\Bigl[ {\mathcal G}\bigl( -\psi^{\prime} \bigr)x-\psi(0)Cx \Bigr],\quad x\in E.
\end{equation}
This is, in fact, (\ref{polish}) so that (i) follows immediately from Proposition \ref{kisinski}. The proof of (iii) is the same as in the Banach space case (see e.g. \cite{li121} and the proof of \cite[Theorem 3.1.27]{knjigah}). Hence, it remains to be proved that
the integral generator of ${\mathcal G}$ is the operator $C^{-1}AC,$ if ${\mathcal G}$ is boundedly equicontinuous and satisfies (C.S.2) (cf. the item (ii)); for the sake of brevity, we shall consider only the
distribution case. Denote by $B$ the integral generator of ${\mathcal G}.$ Then it is checked at once that
$C^{-1}AC\subseteq B.$ Suppose now that $(x,y)\in B,$ i.e., that ${\mathcal G}(-\zeta^{\prime})x={\mathcal G}(\zeta)y$ for all $\zeta \in {\mathcal D}_{0}.$ This clearly implies $A{\mathcal G}(\zeta)x={\mathcal G}(\zeta)y$ for all $\zeta \in {\mathcal D}_{0}.$
Consider now the equation (\ref{nahari}) with $\varphi=\xi$ and $\xi(0)=1.$ By (\ref{dkenk}), it readily follows that $C{\mathcal G}(\psi)x\in D(A).$ Since $A{\mathcal G}(\zeta)x={\mathcal G}(\zeta)y$ for all $\zeta \in {\mathcal D}_{0},$
we obtain that ${\mathcal G}(\zeta)AC{\mathcal G}(\eta)x={\mathcal G}(\zeta)C{\mathcal G}(\eta)y$ for all $\zeta \in {\mathcal D}_{0}$ ($\eta \in {\mathcal D}$). By (C.S.2), we get that
$AC{\mathcal G}(\eta)x=C{\mathcal G}(\eta)y$  ($\eta \in {\mathcal D}$). This, in turn, implies $CA{\mathcal G}(\eta)x=C{\mathcal G}(\eta)y,$ $A{\mathcal G}(\eta)x={\mathcal G}(\eta)y,$
${\mathcal G}( -\eta^{\prime})x-\eta(0)Cx={\mathcal G}(\eta)y$ ($\eta \in {\mathcal D}$), and since $\eta$ was arbitrary, $Cx\in D(A).$ Combined with the equality $AC{\mathcal G}(\eta)x=C{\mathcal G}(\eta)y$  ($\eta \in {\mathcal D}$), the above implies $ACx=Cy$ and $C^{-1}ACx=y,$ as claimed.
\end{proof}

\begin{rem}
\begin{itemize}
\item[(i)]
Even in the case that $E$ is a Banach space and $C=I,$ ${\mathcal G}$ need not satisfy the condition (C.S.2) in ultradistribution case (\cite{cizi}, \cite{knjigah}).
\item[(ii)]
Suppose that $\overline{R(C)}=E,$ $E$ is barreled and $A$ generates a dense (C-DS) ((C-UDS) of $\ast$-class) on $E$.
Then Proposition \ref{kuki}(iii) implies that the dual $\mathcal{G}(\cdot)^*$ is a ($C^*$-DS) (($C^*$-UDS) of $\ast$-class) on $E^*.$ Since ${\mathcal G}^{\ast}(\varphi)A^{\ast} \subseteq A^{\ast}{\mathcal G}^{\ast}(\varphi),$ $\varphi \in {\mathcal D}$ ($\varphi \in {\mathcal D}^{\ast}$) and (\ref{dkenk}) holds with $A$ and ${\mathcal G}$ replaced respectively by $A^{\ast}$ and ${\mathcal G}^{\ast},$ Theorem \ref{fundamentalna}(ii) implies that the integral generator of ${\mathcal G}^{\ast}$ is the operator $(C^{\ast})^{-1}A^{\ast}C^{\ast}$ (cf. also \cite[Remark 3.1.22]{knjigah}).
\item[(iii)] Using Proposition \ref{kisinski} and the method proposed by J. Kisy\'nski in \cite[Section 6]{ki90}, we can introduce the
integral generator of a pre-(C-DS) (pre-(C-UDS) of $\ast$-class) on an arbitrary sequentially complete locally convex space. On the other hand, it seems that the method proposed in \cite[Section 3.5; cf. Definition 3.5.7]{knjigah}
can be used to define the integral generator of a pre-(C-DS) (pre-(C-UDS) of $\ast$-class) only in the case that $E$ is a Banach space.
It would take too long to go into further details concerning these subjects here.
\end{itemize}
\end{rem}

\begin{prop}\label{kisinski-uniqueness}
Every $C$-distribution semigroup ($C$-ultradistribution semigroup of $\ast$-class) is uniquely determined by its generator.
\end{prop}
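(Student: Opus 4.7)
The plan is to show that the difference $H := \mathcal{G}_{1}-\mathcal{G}_{2}$ of two $C$-distribution semigroups (resp.\ $C$-ultradistribution semigroups of $\ast$-class) sharing the same generator $A$ satisfies $\mathcal{G}_{1}(\varphi)H(\psi)=0$ for every $\varphi\in\mathcal{D}_{0}$ (resp.\ $\mathcal{D}_{0}^{\ast}$) and every $\psi\in\mathcal{D}$ (resp.\ $\mathcal{D}^{\ast}$). The nondegeneracy axiom (C.S.2) will then immediately force $H\equiv 0$.

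First, I would derive a mixed analogue of the Kisy\'nski identity (\ref{polish}). Since $A$ generates both $\mathcal{G}_{1}$ and $\mathcal{G}_{2}$, Proposition \ref{isto}(iii) (equivalently, the relation (\ref{dkenk})) gives $\mathcal{G}_{i}(\psi')x=-A\mathcal{G}_{i}(\psi)x-\psi(0)Cx$; combining this with the commutation $\mathcal{G}_{1}(\varphi)A\subseteq A\mathcal{G}_{1}(\varphi)$ (a consequence of $\mathcal{G}(\varphi)G(T)\subseteq G(T)\mathcal{G}(\varphi)$ noted in the text, applied with $T=-\delta'$), a direct computation yields
\begin{equation*}
\mathcal{G}_{1}\bigl(\varphi'\bigr)\mathcal{G}_{2}(\psi)-\mathcal{G}_{1}(\varphi)\mathcal{G}_{2}\bigl(\psi'\bigr)=\psi(0)\mathcal{G}_{1}(\varphi)C-\varphi(0)\mathcal{G}_{2}(\psi)C,\quad \varphi,\psi\in\mathcal{D}\ \bigl(\mathcal{D}^{\ast}\bigr).
\end{equation*}

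Next, I would follow verbatim the integration argument in the proof of Proposition \ref{kisinski}: fix $\varphi\in\mathcal{D}_{0}$ (resp.\ $\mathcal{D}_{0}^{\ast}$) and $a>0$ with $\text{supp}(\psi)\subseteq(-\infty,a]$, differentiate $s\mapsto \mathcal{G}_{1}(\varphi(\cdot-s))\mathcal{G}_{2}(\psi(\cdot+s))x$ on $[0,a]$, and integrate. The mixed identity turns the derivative into the integrand $\psi(s)\mathcal{G}_{1}(\varphi(\cdot-s))Cx-\varphi(-s)\mathcal{G}_{2}(\psi(\cdot+s))Cx$, while the boundary term at $s=a$ vanishes because $\mathcal{G}_{2}(\psi(\cdot+a))=0$. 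This yields
\begin{equation*}
\mathcal{G}_{1}(\varphi)\mathcal{G}_{2}(\psi)x=\int_{0}^{a}\bigl[\psi(s)\mathcal{G}_{1}(\varphi(\cdot-s))Cx-\varphi(-s)\mathcal{G}_{2}(\psi(\cdot+s))Cx\bigr]\,ds.
\end{equation*}
The identical calculation with $\mathcal{G}_{2}$ replaced by $\mathcal{G}_{1}$ reproduces the representation of $\mathcal{G}_{1}(\varphi *_{0}\psi)Cx$ already obtained in the proof of Proposition \ref{kisinski}; subtracting the two expressions leaves $\int_{0}^{a}\varphi(-s)H(\psi(\cdot+s))Cx\,ds$, and since $\varphi\in\mathcal{D}_{0}$ forces $\varphi(-s)=0$ for every $s>0$, this integral vanishes. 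Combining with (C.S.1) for $\mathcal{G}_{1}$ gives $\mathcal{G}_{1}(\varphi)\mathcal{G}_{2}(\psi)=\mathcal{G}_{1}(\varphi *_{0}\psi)C=\mathcal{G}_{1}(\varphi)\mathcal{G}_{1}(\psi)$, i.e.\ $\mathcal{G}_{1}(\varphi)H(\psi)=0$.

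For any fixed $\psi$ and $x\in E$, the preceding step places $H(\psi)x$ in $\bigcap_{\varphi\in\mathcal{D}_{0}}N(\mathcal{G}_{1}(\varphi))=\mathcal{N}(\mathcal{G}_{1})=\{0\}$ by (C.S.2), whence $H\equiv 0$ and $\mathcal{G}_{1}=\mathcal{G}_{2}$; the ultradistribution case proceeds identically after the obvious substitutions $\mathcal{D}_{0},\mathcal{D}\mapsto\mathcal{D}_{0}^{\ast},\mathcal{D}^{\ast}$. The only genuinely new technical point is the mixed identity derived in the first step; once it is in hand, the remainder is a direct transcription of the unmixed calculation already carried out in the proof of Proposition \ref{kisinski}.
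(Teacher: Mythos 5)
Your proof is correct, but after the shared Kisy\'nski-type integration step it follows a genuinely different route from the paper's, and it is in fact shorter. The paper works throughout with the difference $\mathcal{G}=\mathcal{G}_{1}-\mathcal{G}_{2}$: since the inhomogeneous terms $\varphi(0)Cx$ cancel, one gets $A\mathcal{G}(\varphi)=\mathcal{G}(-\varphi')$, hence $\mathcal{G}(\varphi')\mathcal{G}(\psi)=\mathcal{G}(\varphi)\mathcal{G}(\psi')$, and the integration trick yields $\mathcal{G}(\varphi)\mathcal{G}(\psi)=0$; but this alone does not close the argument, and the paper needs a second stage --- applying $A$ twice to the symmetrized identity (\ref{acid-prsute}), using the injectivity of $C$ to deduce $\mathcal{G}_{1}(\psi')=\mathcal{G}_{2}(\psi')$, i.e.\ $\mathcal{G}'=0$, and finally a scalar distribution-theory argument (a distribution with vanishing derivative is constant, and a constant supported in $[0,\infty)$ is zero). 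You instead derive a mixed identity coupling $\mathcal{G}_{1}$ and $\mathcal{G}_{2}$, which is legitimate: the shared generator gives $\mathcal{G}_{2}(\psi)x\in D(A)$ with $A\mathcal{G}_{2}(\psi)x=\mathcal{G}_{2}(-\psi')x-\psi(0)Cx$ and the analogous relation for $\mathcal{G}_{1}$, while $\mathcal{G}_{1}(\varphi)A\subseteq A\mathcal{G}_{1}(\varphi)$ lets you equate the two evaluations of $A\mathcal{G}_{1}(\varphi)\mathcal{G}_{2}(\psi)x$ (the commutation $C\mathcal{G}_{2}=\mathcal{G}_{2}C$ from the definition of a pre-(C-DS) puts the right-hand side in the asserted form). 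Running the integration once on the mixed product and subtracting the unmixed representation leaves the correction term $\int_{0}^{a}\varphi(-s)H(\psi(\cdot+s))Cx\,ds$, which indeed vanishes for $\varphi\in\mathcal{D}_{0}$, since a smooth function supported in $[0,\infty)$ vanishes together with all derivatives at $0$; then (C.S.2) for $\mathcal{G}_{1}$ alone kills $H(\psi)x$. What each approach buys: yours trades the paper's entire second stage (double application of $A$, injectivity of $C$, and the primitive-plus-support argument) for a single direct appeal to the nondegeneracy axiom, making the proof one pass; the paper's version isolates the algebraic fact $\mathcal{G}(\varphi)\mathcal{G}(\psi)=0$ for the difference and never invokes (C.S.2) in its closing step, relying instead on injectivity of $C$. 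One small point, at exactly the level of rigour of the paper's own proof of Proposition \ref{kisinski}: the vanishing of the boundary term $\mathcal{G}_{2}(\psi(\cdot+a))=0$ requires $\mathrm{supp}(\psi)\subseteq(-\infty,a)$ strictly (or a limiting argument), which is harmless since $a$ may be enlarged.
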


\begin{proof}
We will prove the assertion only in distribution case because the ultradistribution case can be considered quite similarly. Suppose that ${\mathcal G}_{1}$ and ${\mathcal G}_{2}$ are two $C$-distribution semigroups generated by
$A.$ Put ${\mathcal G}:={\mathcal G}_{1}-{\mathcal G}_{2}.$ Then ${\mathcal G} \in {\mathcal D}^{\prime}_{0}(L(E)),$ $A{\mathcal G}(\varphi)\subseteq {\mathcal G}(\varphi)A,$ $\varphi \in {\mathcal D}$ and
$A{\mathcal G}(\varphi)={\mathcal G}(-\varphi^{\prime}),$ $\varphi \in {\mathcal D}.$
This implies
\begin{align*}
{\mathcal G}\bigl(\varphi^{\prime}\bigr){\mathcal G}(\psi)x=-A{\mathcal G}(\varphi){\mathcal G}(\psi)x=-{\mathcal G}(\varphi)A{\mathcal G}(\psi)x={\mathcal G}(\varphi){\mathcal G}\bigl(\psi^{\prime}\bigr)x,
\end{align*}
for all $\varphi,\ \psi \in {\mathcal D}$ and $x\in E.$ Keeping in mind this equality,
the part of proof of Proposition \ref{kisinski} starting from the equation (\ref{polish-1}) shows that
\begin{align*}
0&=\int^{a}_{0}\Bigl[{\mathcal G}\bigl(\varphi^{\prime}(\cdot-s)\bigr)
{\mathcal G}(\psi(\cdot +s))x-
{\mathcal G}(\varphi (\cdot-s))
{\mathcal G}\bigl(\psi^{\prime}(\cdot +s)\bigr)x\Bigr]\, ds={\mathcal G}(\varphi){\mathcal G}(\psi)x
\end{align*}
for all $\varphi,\ \psi \in {\mathcal D}$ and $x\in E$ (here, the number $a>0$ is chosen so that supp$(\psi)\subseteq (-\infty,a]$).  In particular, ${\mathcal G}(\psi){\mathcal G}(\varphi)={\mathcal G}(\varphi){\mathcal G}(\psi)=0$ ($\varphi,\ \psi \in {\mathcal D}$), so that
\begin{equation}\label{acid-prsute}
{\mathcal G}_{1}(\varphi){\mathcal G}_{2}(\psi)+{\mathcal G}_{2}(\varphi){\mathcal G}_{1}(\psi)={\mathcal G}_{1}(\psi){\mathcal G}_{2}(\varphi)+{\mathcal G}_{2}(\psi){\mathcal G}_{1}(\varphi),\quad \varphi,\ \psi \in {\mathcal D}.
\end{equation}
Applying the operator $A$ on the both sides of (\ref{acid-prsute}), and using (\ref{acid-prsute}) once more for the equality of terms ${\mathcal G}_{1}(-\varphi^{\prime}){\mathcal G}_{2}(\psi)+{\mathcal G}_{2}(-\varphi^{\prime}){\mathcal G}_{1}(\psi)$
and ${\mathcal G}_{1}(\psi){\mathcal G}_{2}(-\varphi^{\prime})+{\mathcal G}_{2}(\psi){\mathcal G}_{1}(-\varphi^{\prime}),$
we get that
\begin{align}
\notag & {\mathcal G}_{1}(\psi)  {\mathcal G}_{2}\bigl(-\varphi^{\prime}\bigr)-\varphi(0){\mathcal G}_{2}(\psi)C+{\mathcal G}_{2}(\psi){\mathcal G}_{1}\bigl(-\varphi^{\prime}\bigr)-\varphi(0){\mathcal G}_{1}(\psi)C
\\\label{funk} & ={\mathcal G}_{1}\bigl( -\psi^{\prime} \bigr){\mathcal G}_{2}(\varphi)-\psi(0){\mathcal G}_{2}(\varphi)C+{\mathcal G}_{2}\bigl(-\psi^{\prime}\bigr){\mathcal G}_{1}(\varphi)-\psi(0){\mathcal G}_{1}(\varphi)C,\quad \varphi,\ \psi \in {\mathcal D}.
\end{align}
Now we shall apply the operator $A$ on the both sides of (\ref{funk}). In such a way, we get that
\begin{align*}
{\mathcal G}_{1}\bigl(-\psi^{\prime}\bigr) &{\mathcal G}_{2}\bigl(-\varphi^{\prime}\bigr)-\psi(0){\mathcal G}_{2}\bigl( -\varphi^{\prime}\bigr)-\varphi(0) \bigl[ {\mathcal G}_{2}\bigl(-\psi^{\prime}\bigr) -\psi(0)C^{2} \bigr]
\\ & +{\mathcal G}_{2}\bigl(-\psi^{\prime}\bigr){\mathcal G}_{1}\bigl(-\varphi^{\prime}\bigr)-\psi(0){\mathcal G}_{1}\bigl( -\varphi^{\prime}\bigr)-\psi(0)\bigl[ {\mathcal G}_{1}\bigl(-\psi^{\prime}\bigr) -\psi(0)C^{2}  \bigr]
\\ & = {\mathcal G}_{1}\bigl(-\psi^{\prime} \bigr) \bigl[{\mathcal G}_{2}\bigl(-\varphi^{\prime}\bigr)-\varphi(0)C\bigr] -\psi(0)\bigl[ {\mathcal G}_{2}\bigl(-\varphi^{\prime}\bigr) -\varphi(0)C^{2} \bigr]
\\&+{\mathcal G}_{2}\bigl(-\psi^{\prime}\bigr) \bigl[ {\mathcal G}_{1}\bigl(-\varphi^{\prime}\bigr) -\varphi(0)C \bigr]-\psi(0)\bigl[ {\mathcal G}_{1}\bigl(-\varphi^{\prime}\bigr)C-\varphi(0)C^{2}  \bigr]
\end{align*}
for any $ \varphi,\ \psi \in {\mathcal D}.$ Using this equality with $\varphi(0)=1,$ and the injectivity of $C$, we obtain that ${\mathcal G}_{1}(\psi^{\prime})={\mathcal G}_{2}(\psi^{\prime}),$ $\psi \in {\mathcal D}.$ Hence, ${\mathcal G}^{\prime}=0$ and the standard arguments from the theory of scalar-valued distributions show that there exists a test function $\eta \in {\mathcal D}_{[0,1]}$ such that $\int^{+\infty}_{-\infty}\eta (t)\, dt=1$ and
${{\mathcal G}(\psi)=\mathcal G}_{1}(\psi)-{\mathcal G}_{2}(\psi)=(\int^{+\infty}_{-\infty}\psi (t)\, dt ){\mathcal G}(\eta)$ for all $\psi \in {\mathcal D}.$ Choosing $\psi \in {\mathcal D}_{[-2,-1]}$ with $\int^{+\infty}_{-\infty}\psi (t)\, dt=1$ we easily get
that ${\mathcal G}(\eta)=0,$ so ${\mathcal G}(\psi)=0$ for all $\psi \in {\mathcal D}.$ This completes the proof of proposition.
\end{proof}

\begin{rem}\label{vuaq-1977}
It should be noticed that M. Ju Vuvunikjan has observed (without giving a corresponding proof) that for any closed linear operator $A$ on $E$ there exists at most one vector-valued distribution
${\mathcal G}\in {\mathcal D}^{\prime}_{0}(L(E))$
satisfying that $\mathcal{G}(\varphi)A\subseteq A{\mathcal G}(\varphi),$ $\varphi \in {\mathcal D}$ and that (\ref{dkenk}) holds with $C=I$ (cf. \cite[p. 436]{vuaq} for more details). It is also worth noting that we do not use the operation of convolution of vector-valued (ultra-)distributions in the proof of Proposition \ref{kisinski-uniqueness}.
\end{rem}

\begin{thm}\label{lokal-int-C}
Let $\mathcal{G}$ be a (C-DS) generated by $A$, and let $L(E,[D(A)])$ be a quasi-complete \emph{(DF)}-space.
Then, for every $\tau>0$, there exist $n_{\tau}\in\mathbb{N}$
such that $A$ is the integral generator of a local $n_{\tau}$-times integrated $C$-semigroup on $E.$
\end{thm}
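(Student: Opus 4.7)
The plan is to view $\mathcal{G}$ as a distribution with values in $Z := L(E,[D(A)])$, apply the Schwartz structure theorem on a neighborhood of $[0,\tau]$ to obtain a representation $\mathcal{G} = (-1)^{n_\tau} D^{n_\tau}\mathbf{f}$ with $f$ continuous, and then identify an appropriately normalised $f$ with the sought local $n_\tau$-times integrated $C$-semigroup generated by $A$.

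First I would verify that $\mathcal{G}\in\mathcal{D}'_0(Z)$. Theorem \ref{59}(i) gives $\mathcal{G}(\varphi)x\in D(A)$ for every $\varphi\in\mathcal{D}$ and $x\in E$, so $\mathcal{G}(\varphi)$ maps $E$ into $D(A)$; the closedness of $A$ and the bounded equicontinuity of $\mathcal{G}$ together imply $\mathcal{G}(\varphi)\in L(E,[D(A)])$ and that $\varphi\mapsto\mathcal{G}(\varphi)$ is continuous from $\mathcal{D}$ into $Z$. Since $Z$ is a quasi-complete \emph{(DF)}-space, any $Z$-valued distribution is of finite order on compact intervals, and Schwartz's structure theorem (recalled in the paragraph preceding Lemma \ref{polinomi}) produces an $n_\tau\in\mathbb{N}$ and a continuous $f:[-\tau-1,\tau+1]\to Z$ with
\begin{equation*}
\mathcal{G}(\varphi)=(-1)^{n_\tau}\int_{-\tau-1}^{\tau+1}\varphi^{(n_\tau)}(t)f(t)\,dt,\quad \varphi\in\mathcal{D}_{(-\tau-1,\tau+1)}.
\end{equation*}
Because $\operatorname{supp}(\mathcal{G})\subseteq[0,\infty)$, the analogue of Lemma \ref{polinomi} on $(-\tau-1,0)$ forces $f|_{(-\tau-1,0)}$ to be a polynomial of degree $\leq n_\tau-1$; subtracting this polynomial (which leaves $\mathcal{G}$ unchanged) we may assume $f\equiv 0$ on $[-\tau-1,0]$, and set $S_{n_\tau}(t):=f(t)$ for $t\in[0,\tau]$.

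It remains to show that $(S_{n_\tau}(t))_{t\in[0,\tau]}$ is a local $n_\tau$-times integrated $C$-semigroup with integral generator $A$. Testing the convolution identity (C.S.1) against $\varphi,\psi\in\mathcal{D}_{(0,\tau)}$ and carrying out $n_\tau$-fold integration by parts on both sides yields, after bookkeeping of the boundary terms, the pointwise functional equation characterising an $n_\tau$-times integrated $C$-semigroup on $[0,\tau]$ for $S_{n_\tau}$. Combining this with the identity $A\mathcal{G}(\varphi)x=\mathcal{G}(-\varphi')x-\varphi(0)Cx$ from (\ref{dkenk}) and using the closedness of $A$ then produces
\begin{equation*}
A\int_0^t S_{n_\tau}(s)x\,ds=S_{n_\tau}(t)x-\frac{t^{n_\tau}}{n_\tau!}Cx,\quad t\in[0,\tau],\ x\in E,
\end{equation*}
which identifies $A$ as the integral generator.

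The main obstacle will be this last step: converting the distributional convolution identity (C.S.1) into the pointwise semigroup equation for $S_{n_\tau}$, keeping careful track of the boundary polynomials produced by the repeated integrations by parts, and confirming that the integral generator of the resulting local integrated $C$-semigroup is precisely $A$ rather than a proper extension of it (here the injectivity of $C$ and the inclusion $C^{-1}AC=A$, derived for (C-DS)'s before Proposition \ref{isto}, will be essential).
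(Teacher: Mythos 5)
Your proposal is correct and takes essentially the same route as the paper, whose proof is precisely the combination you describe: the structure theorem for distributions with values in the quasi-complete (DF)-space $L(E,[D(A)])$ (which is what guarantees finite order, cf. Remark \ref{finite-order}), Lemma \ref{polinomi} to normalise the continuous density so that it vanishes on $(-\infty,0]$, and the Banach-space computations of \cite[Theorem 3.1.7]{knjigah} to convert (C.S.1) and (\ref{dkenk}) into the pointwise integrated-semigroup identities and to identify the integral generator, using $C^{-1}AC=A$. One small correction: the facts $\mathcal{G}(\varphi)x\in D(A)$ and $A\mathcal{G}(\varphi)x=\mathcal{G}(-\varphi')x-\varphi(0)Cx$ should be quoted from Proposition \ref{isto}(iii) (cf. Remark \ref{finite-order}) rather than Theorem \ref{59}(i), which presupposes that $A$ is $C$-wellposed, a hypothesis not available here.
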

\begin{proof}
The proof follows by the use of \cite[Theorem 3.1.7]{knjigah}, Lemma \ref{polinomi} and the structural theorem for locally convex valued distributions.
\end{proof}

\begin{rem}\label{finite-order}
Let $\mathcal{G}$ be a (C-DS) generated by $A.$ Then we have $A\mathcal{G}(\varphi)x=-\mathcal{G}(\varphi')x-\varphi (0)Cx$, $\varphi\in\mathcal{D}$, $x\in E,$ so that
$\mathcal{G}$ can be viewed as a continuous linear mapping from $\mathcal{D}$ into $L(E,[D(A)])$. Theorem \ref{lokal-int-C} continues to hold if we assume that the distribution $\mathcal{G}\in {\mathcal D}'(L(E,[D(A)]))$ is of finite order, instead of setting the assumption that $L(E,[D(A)])$ is a quasi-complete (DF) space. In order to transfer the assertions of \cite[Theorem 3.1.21, Remark 3.1.22]{knjigah} to $C$-distribution semigroups in locally convex spaces, it seems almost inevitable to assume that the distribution $\mathcal{G}\in {\mathcal D}'(L(E,[D(A)]))$ is of finite order.
\end{rem}

The proof of subsequent theorem can be deduced by using Lemma \ref{polinomi}, the proof of
\cite[Theorem 3.1.8]{knjigah} and an elementary argumentation regarding the topological properties of the space ${\mathcal D}.$

\begin{thm}\label{cyne}
Suppose that there exists a sequence $((p_k,\tau_k))_{k\in \mathbb{N}_{0}}$ in $\mathbb{N}_{0} \times (0,\infty)$
such that $\lim_{k\rightarrow \infty}\tau_{k}=\infty$ and $A$ is a subgenerator of a locally equicontinuous $p_{k}$-times integrated
$C$-semigroup $(S_{p_{k}}(t))_{t\in [0,\tau_{k})}$ on $E$ ($k\in \mathbb{N}_{0}$).
Then the operator $C^{-1}AC$ generates a (C-DS) ${\mathcal G},$ given by
$$
{\mathcal G}(\varphi)x=(-1)^{p_{k}}\int \limits^{\infty}_{0}\varphi^{(p_{k})}(t)S_{p_{k}}(t)x\, dt,\quad \varphi \in {\mathcal D}_{(-\infty ,\tau_{k})},\ x\in E.
$$
\end{thm}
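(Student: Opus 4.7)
The plan is to transplant the Banach-space argument of \cite[Theorem 3.1.8]{knjigah} to the sequentially complete locally convex setting, using Lemma \ref{polinomi} to verify the non-degeneracy axiom (C.S.2) and drawing on the topological properties of $\mathcal{D}$ for the required continuity estimates. The first point is to show that the definition is consistent: for $\varphi\in\mathcal{D}$ pick $k$ with $\mathrm{supp}(\varphi)\subseteq(-\infty,\tau_k)$ (possible because $\tau_k\to\infty$); I must verify the right-hand side is independent of such $k$. For any two indices $k,l$ with, say, $p_k\leq p_l$, both $(S_{p_k}(t))$ and $(S_{p_l}(t))$ are locally equicontinuous integrated $C$-semigroups with common subgenerator $A$ on $[0,\min(\tau_k,\tau_l))$; uniqueness of the Laplace transform representation forces
$$
S_{p_l}(t)x=\frac{1}{(p_l-p_k-1)!}\int_0^t(t-s)^{p_l-p_k-1}S_{p_k}(s)x\,ds\quad (p_l>p_k),
$$
and $S_{p_l}=S_{p_k}$ if $p_l=p_k$. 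Substituting into the $p_l$-integral, Fubini together with $p_l-p_k$ successive integrations by parts (all boundary terms vanish since $\mathrm{supp}(\varphi)\subseteq(-\infty,\min(\tau_k,\tau_l))$) reduces it to the $p_k$-integral, and the resulting sign $(-1)^{p_l-p_k}$ is exactly absorbed by the prefactors $(-1)^{p_k}$ and $(-1)^{p_l}$.

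Once consistency is in hand, $\mathcal{G}\in\mathcal{D}'_0(L(E))$ and its bounded equicontinuity follow directly: continuity in $\varphi$ flows from the local equicontinuity of each family together with the fact that $\mathcal{D}$-convergence controls all derivatives uniformly on compacts, and the support condition is trivial because $\mathrm{supp}(\varphi)\subseteq(-\infty,0)$ forces $\varphi^{(p_k)}\equiv 0$ on $[0,\infty)$.

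To identify the generator I would combine integration by parts with the structural identities of an integrated $C$-semigroup, namely $A\int_0^t S_{p_k}(s)x\,ds=S_{p_k}(t)x-\tfrac{t^{p_k}}{p_k!}Cx$ for $x\in E$ and $AS_{p_k}(t)x=S_{p_k}(t)Ax$ for $x\in D(A)$; the closedness of $A$ allows one to pull it under the integral and yields both the intertwining $\mathcal{G}(\varphi)A\subseteq A\mathcal{G}(\varphi)$ and the crucial identity
$$
A\mathcal{G}(\varphi)x=\mathcal{G}(-\varphi')x-\varphi(0)Cx,\quad x\in E,\ \varphi\in\mathcal{D}.
$$
Theorem \ref{fundamentalna}(i) then delivers (C.S.1). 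For (C.S.2), if $x\in\mathcal{N}(\mathcal{G})$ and $\psi\in\mathcal{D}_{(0,\tau_k)}$, then $\int_0^{\tau_k}\psi^{(p_k)}(t)S_{p_k}(t)x\,dt=0$, so Lemma \ref{polinomi} produces $x_0,\dots,x_{p_k-1}\in E$ with $S_{p_k}(t)x=\sum_{j=0}^{p_k-1}t^j x_j$ on $(0,\tau_k)$; substituting this polynomial into the first structural identity above and matching coefficients in $t$ forces $x_0=\dots=x_{p_k-1}=0$ and $Cx=0$, whence $x=0$ by injectivity of $C$. Theorem \ref{fundamentalna}(ii) now identifies the generator with $C^{-1}AC$.

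The main technical hurdle lies in the consistency step: one needs a uniqueness/compatibility statement for integrated $C$-semigroups sharing a common subgenerator in a sequentially complete locally convex space, together with a Fubini and integration-by-parts calculus justified only by local equicontinuity of the families rather than by a global norm bound. Once that obstacle is cleared, the remaining verifications follow the Banach-space prototype almost verbatim.
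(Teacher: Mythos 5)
Your proposal takes exactly the route the paper intends: the paper's own proof is only a three-line sketch referring to Lemma \ref{polinomi}, the proof of \cite[Theorem 3.1.8]{knjigah}, and ``an elementary argumentation regarding the topological properties of ${\mathcal D}$,'' and your expansion supplies precisely these ingredients --- the Banach-space prototype for (C.S.1) and the generator identification via Theorem \ref{fundamentalna}, Lemma \ref{polinomi} for (C.S.2) (with the coefficient-matching argument that also appears verbatim in the proof of Theorem \ref{herbs}), and the topology of ${\mathcal D}$ for bounded equicontinuity and well-definedness.

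One step, however, would fail as literally written: in the consistency verification you claim that ``uniqueness of the Laplace transform representation forces'' $S_{p_l}(t)x=(p_l-p_k-1)!^{-1}\int_0^t(t-s)^{p_l-p_k-1}S_{p_k}(s)x\,ds$. The families $(S_{p_k}(t))_{t\in[0,\tau_k)}$ are merely \emph{local} and locally equicontinuous, with $\tau_k<\infty$ and no growth hypothesis, so no Laplace transform of $S_{p_k}(\cdot)x$ exists and no Laplace-transform uniqueness argument is available. The correct justification --- and the one consistent with the local theory in \cite{knjigah} that the paper leans on --- is to check directly that $t\mapsto \int_0^t \frac{(t-s)^{p_l-p_k-1}}{(p_l-p_k-1)!}S_{p_k}(s)x\,ds$ is again a locally equicontinuous, non-degenerate $p_l$-times integrated $C$-semigroup with subgenerator $A$ on $[0,\min(\tau_k,\tau_l))$, and then to invoke uniqueness of local integrated $C$-semigroups sharing a common subgenerator. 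That uniqueness is proved without transforms: if $S_1,S_2$ are two such semigroups and $d(t):=S_1(t)x-S_2(t)x$, then $v(t):=\int_0^t d(s)\,ds$ satisfies $v'(t)=Av(t)$, $v(0)=0$, and vanishes identically by the uniqueness of mild solutions of the homogeneous problem, which the subgenerator assumption guarantees (a Ljubich-type convolution argument; cf.\ the corresponding local results in \cite{knjigah}). You correctly isolated this compatibility statement as the main technical hurdle, but the tool you proposed for it is not applicable in the local setting; with the mild-solution uniqueness argument substituted, your proof is complete and coincides with the paper's.
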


\begin{rem}\label{Banach}
In the case that $C=I,$ then it suffices to suppose that the operator $A$
is the integral generator of a locally equicontinuous $p$-times integrated
semigroup $(S_{p}(t))_{t\in [0,\tau)}$ for some $p\in {\mathbb N}$ and $\tau>0$  (cf.  \cite[Remark 3.1.10]{knjigah} for the Banach space case).
\end{rem}

Let $\alpha\in(0,\infty)\setminus \mathbb{N}$, $f\in\mathcal{S}$ and $n=\lceil\alpha\rceil$. Let us recall
that the Weyl fractional derivative $W^{\alpha}_+$ of order $\alpha$ (cf. \cite{mija} and \cite{knjigah})
is defined by
\begin{align*}
W^{\alpha}_+f(t):=\frac{(-1)^n}{\Gamma(n-\alpha)}\frac{d^n}{dt^n}\int\limits^{\infty}_t(s-t)^{n-\alpha-1}f(s)\,ds,
\;t\in\mathbb{R}.
\end{align*}
If $\alpha=n\in\mathbb{N}_{0}$, then we set $W^n_+:=(-1)^n\frac{d^n}{dt^n}.$
It is well known that the following equality holds: $W^{\alpha+\beta}_{+}f=W^{\alpha}_{+}W^{\beta}_{+}f$, $\alpha,\,\beta>0,$ $f\in\mathcal{S}$.
Suppose that  $\alpha\in(0,\infty)\setminus \mathbb{N}$ and $f\in C([0,\infty) : E).$ Set $f_{n-\alpha}(t):=(g_{n-\alpha}\ast f)(t),$ $t\geq 0.$
Making use of the dominated convergence theorem, and the change of variables $s\mapsto s-t,$ we get that
$$
\frac{1}{\Gamma(n-\alpha)}\frac{d^n}{dt^n}\int\limits^{\infty}_t(s-t)^{n-\alpha-1}\varphi(s)\,ds=
\int \limits^{\infty}_{0}g_{n-\alpha}(s)\varphi^{(n)}(t+s)\, ds,\quad t\geq 0,\ \varphi \in {\mathcal D}.
$$
Hence,
\begin{align*}
\int \limits^{\infty}_{0}W^{\alpha}_+\varphi(t)f(t)\, dt&=(-1)^{n}
\int \limits^{\infty}_{0}g_{n-\alpha}(s)\varphi^{(n)}(t+s)f(t)\, ds \, dt
\\ &=(-1)^{n}  \int \limits^{\infty}_{0}\! \!  \int \limits^{t}_{0}\varphi^{(n)}(t)g_{n-\alpha}(s)f(t-s)\, ds\, dt
\\ &=(-1)^{n}  \int \limits^{\infty}_{0}\varphi^{(n)}(t)f_{n-\alpha}(t)\, dt,\quad \varphi \in {\mathcal D}.
\end{align*}
Therefore, if
$A$ is the integral generator of a locally equicontinuous $\alpha$-times integrated $C$-semigroup $(S_{\alpha}(t))_{t\geq 0}$ on $E,$
then we have that:
$$
\int^{\infty}_0W^{\alpha}_+\varphi(t)S_{\alpha}(t)x\,dt=
(-1)^{n}  \int \limits^{\infty}_{0}\varphi^{(n)}(t)S_{n}(t)x\, dt,\quad x\in E,\ \varphi \in {\mathcal D},
$$
with $(S_{n}(t))_{t\geq 0}$ being the locally equicontinuous $n$-times integrated $C$-semigroup generated by $A.$
Combined with Theorem \ref{cyne}, the above implies:

\begin{thm}\label{miana}
Assume that $\alpha \geq 0$ and $A$ is the integral generator of a locally equicontinuous $\alpha$-times integrated $C$-semigroup $(S_{\alpha}(t))_{t\geq 0}$ on $E.$
Set
\begin{equation}\label{globalne}
\mathcal{G}_{\alpha}(\varphi)x:=\int^{\infty}_0W^{\alpha}_+\varphi(t)S_{\alpha}(t)x\,dt,\quad x\in E,\ \varphi\in\mathcal{D}.
\end{equation}
Then $A$ is the integral generator of a (C-DS) ${\mathcal G}.$
\end{thm}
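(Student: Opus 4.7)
The plan is to reduce the statement to Theorem~\ref{cyne} by means of the Weyl-to-integer-derivative identity already derived in the paragraph immediately preceding the theorem. Since that identity covers the fractional case $\alpha\in(0,\infty)\setminus\mathbb{N}$, the real content is to handle the integer case first and then bootstrap.

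First I would dispose of the case $\alpha=n\in\mathbb{N}_0$. Here $W^{n}_+=(-1)^{n}\frac{d^{n}}{dt^{n}}$, so
\[
\mathcal{G}_{\alpha}(\varphi)x=(-1)^{n}\int^{\infty}_{0}\varphi^{(n)}(t)S_{n}(t)x\,dt,\quad x\in E,\ \varphi\in\mathcal{D}.
\]
Applying Theorem~\ref{cyne} with the constant choice $p_{k}=n$ and $\tau_{k}=k\to\infty$, we get that $C^{-1}AC$ generates a (C-DS) given by exactly this formula. Because $A$ is the \emph{integral} generator of an $n$-times integrated $C$-semigroup, the identity $C^{-1}AC=A$ holds (this is the standard property of integral generators under the standing assumption $CA\subseteq AC$), so the generator is in fact $A$.

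Next I would treat $\alpha\in(0,\infty)\setminus\mathbb{N}$. Setting $n:=\lceil\alpha\rceil$, define $S_{n}(t)x:=(g_{n-\alpha}\ast S_{\alpha}(\cdot)x)(t)$, $t\geq 0$. Then $(S_{n}(t))_{t\geq 0}$ is a locally equicontinuous $n$-times integrated $C$-semigroup whose integral generator is again $A$ (this is a routine convolution argument; the $\alpha$-times integrated semigroup convolved with $g_{n-\alpha}$ yields the $n$-times integrated semigroup with the same generator). Combining this observation with the computation displayed just before the theorem statement,
\[
\int^{\infty}_{0}W^{\alpha}_{+}\varphi(t)S_{\alpha}(t)x\,dt=(-1)^{n}\int^{\infty}_{0}\varphi^{(n)}(t)S_{n}(t)x\,dt,\quad x\in E,\ \varphi\in\mathcal{D},
\]
reduces the definition of $\mathcal{G}=\mathcal{G}_{\alpha}$ to precisely the formula in the integer case applied to $(S_{n}(t))_{t\geq 0}$. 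Hence by the first step, $\mathcal{G}$ is a (C-DS) whose integral generator is $A$.

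The likely main obstacle is to justify cleanly that passing from the $\alpha$-times integrated $C$-semigroup to the $n$-times integrated one (by convolution with $g_{n-\alpha}$) preserves both local equicontinuity and the identity of the integral generator in the locally convex setting, together with a Fubini-type justification of the Weyl-to-integer identity for general $\varphi\in\mathcal{D}$ (the paragraph before the theorem performs the exchange of integrals and the change of variables; one must check that $g_{n-\alpha}$ is locally integrable and that $\varphi^{(n)}$ has compact support so the Fubini step is legitimate, which it is). Once these routine items are in hand, the statement follows directly by quoting Theorem~\ref{cyne}.
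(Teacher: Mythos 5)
Your proposal is correct and follows essentially the same route as the paper: the paper's proof consists precisely of the Weyl-to-integer-derivative identity $\int^{\infty}_0 W^{\alpha}_+\varphi(t)S_{\alpha}(t)x\,dt=(-1)^{n}\int^{\infty}_{0}\varphi^{(n)}(t)S_{n}(t)x\,dt$ derived in the preceding paragraph (via convolution with $g_{n-\alpha}$), combined with an appeal to Theorem~\ref{cyne}. Your explicit handling of the integer case, the identification $C^{-1}AC=A$ for the integral generator, and the Fubini justification merely spell out details the paper leaves implicit.
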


It is well known that the integral generator of a $C$-distribution semigroup in a Banach space can have the empty $C$-resolvent set. On the other hand, the existence and polynomial boundedness of $C$-resolvent of $A$ on a certain exponential region ensures that the operator $C^{-1}AC$
generates a (C-DS). More precisely, we have the following.

\begin{thm}\label{herbs}
Let $a>0$, $b>0$, $\alpha>0$ and $e(a,b)\subseteq \rho_C(A).$ Suppose that the mapping $\lambda\mapsto(\lambda-A)^{-1}Cx$,
$\lambda\in e(a,b)$ is continuous for every fixed element $x\in E$, as well as that the operator family $\{(1+|\lambda|)^{-\alpha}(\lambda- A)^{-1}C : \lambda \in e(a,b)\}\subseteq L(E)$ is equicontinuous.
Set
\begin{align}\label{franci}
\mathcal{G}(\varphi)x:=(-i)\int_{\Gamma}\hat{\varphi}(\lambda)(\lambda-A)^{-1}Cx\,d\lambda,
\;\;x\in E,\;\varphi\in\mathcal{D},
\end{align}
with $\Gamma$ being the upwards oriented boundary of region $e(a,b)$.
Then $\mathcal{G}$ is a (C-DS) generated by $C^{-1}AC$.
\end{thm}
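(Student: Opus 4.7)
The plan is to verify the hypotheses of Theorem \ref{fundamentalna}(ii) for $\mathcal{G}$ and the operator $A$; once in place, that theorem immediately identifies $\mathcal{G}$ as a (C-DS) generated by $C^{-1}AC$. Explicitly, I must check: (a) the integral in (\ref{franci}) converges and defines a boundedly equicontinuous element $\mathcal{G} \in {\mathcal D}^{\prime}_{0}(L(E))$; (b) $\mathcal{G}(\varphi)C = C\mathcal{G}(\varphi)$; (c) $\mathcal{G}(\varphi)A \subseteq A\mathcal{G}(\varphi)$; (d) the generator identity (\ref{dkenk}) holds; and (e) $\mathcal{G}$ obeys (C.S.2).

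For (a), I first apply the Paley-Wiener estimate (Theorem \ref{p-w-dis}) scalarly: for $\varphi \in \mathcal{D}$ with $\text{supp}(\varphi) \subseteq [-k,k]$ and any $n \in \mathbb{N}$, one has $|\langle x^{\ast},\hat{\varphi}(\lambda)\rangle| \leq C_{n,\varphi}(1+|\lambda|)^{-n} e^{k|\Re\lambda|}$. Combined with the equicontinuity hypothesis $p((\lambda-A)^{-1}Cx) \leq M_p (1+|\lambda|)^{\alpha} q_p(x)$ on $e(a,b)$ and the parametrization of $\Gamma$ by $s \mapsto s \pm i e^{as}$ for $s \geq b$ (along which $|\lambda| \asymp e^{as}$ and $|d\lambda| \lesssim a e^{as}ds$), the integrand in (\ref{franci}) is controlled by $(1+|\lambda|)^{-n+\alpha}e^{k\Re\lambda}|d\lambda|$, which is absolutely integrable once $n$ is chosen large enough in terms of $k$, $a$, and $\alpha$. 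The resulting estimate is uniform in $x$ on bounded sets and continuous in $\varphi$ in the $\mathcal{D}$-topology, giving bounded equicontinuity. To force $\text{supp}(\mathcal{G}) \subseteq [0,\infty)$, I take $\varphi$ with $\text{supp}(\varphi) \subseteq [-K,-\epsilon]$, note that Paley-Wiener then yields the sharper bound $|\hat{\varphi}(\lambda)| \lesssim (1+|\lambda|)^{-n} e^{-\epsilon \Re\lambda}$ on $\Re\lambda \geq 0$, and close $\Gamma$ to the right with horizontal segments and a vertical cut at $\Re\lambda = R$ (all inside $\rho_{C}(A)$); the added contributions vanish as $R \to \infty$, and Cauchy's theorem gives $\mathcal{G}(\varphi) = 0$.

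Step (b) is immediate from the commutation of $C$ with $(\lambda - A)^{-1}C$, which follows from the standing hypothesis $CA \subseteq AC$. For (c) and (d), closedness of $A$ permits passing $A$ under the integral, yielding
\begin{equation*}
A\mathcal{G}(\varphi)x = (-i)\int_\Gamma \hat{\varphi}(\lambda)\bigl[\lambda(\lambda-A)^{-1}Cx - Cx\bigr]\,d\lambda.
\end{equation*}
Integration by parts in the defining formula of $\hat{\varphi}$ gives $\lambda\hat{\varphi}(\lambda) = \widehat{-\varphi'}(\lambda)$, so the first summand equals $\mathcal{G}(-\varphi')x$. For the second summand I deform $\Gamma$ onto the imaginary axis (legal by analyticity of $\hat{\varphi}$ and the Paley-Wiener decay) and invoke Fourier inversion to obtain $(-i)\int_\Gamma \hat{\varphi}(\lambda)\,d\lambda = \varphi(0)$, giving exactly $-\varphi(0)Cx$. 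The inclusion (c) comes from the same calculation applied on $D(A)$ using $A(\lambda-A)^{-1}Cx = (\lambda-A)^{-1}CAx$.

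Step (e) is the most delicate. If $\mathcal{G}(\varphi)x = 0$ for every test $\varphi$, pairing with an arbitrary $x^{\ast} \in E^{\ast}$ produces a scalar holomorphic function $\lambda \mapsto \langle x^{\ast},(\lambda-A)^{-1}Cx\rangle$ on $e(a,b)$ whose generalized Laplace transform annihilates $\mathbf{D}$; by the uniqueness of the Laplace transform for scalar distributions of sub-exponential growth the function itself vanishes, and Hahn-Banach together with injectivity of $\lambda - A$ forces $Cx = 0$, whence $x = 0$ by injectivity of $C$. The main obstacle is the contour-deformation bookkeeping in (d) and in the support argument: the logarithmic control $|\Im\lambda| \leq e^{a\Re\lambda}$ encoded in the definition of $e(a,b)$ is precisely what balances the Paley-Wiener exponential growth $e^{k|\Re\lambda|}$ against the polynomial resolvent bound and the $|d\lambda|$ blow-up on $\Gamma$; one must moreover check that every intermediate contour stays inside $\rho_{C}(A)$ so that the integrand remains holomorphic throughout the deformation.
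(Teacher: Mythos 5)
Your steps (a)--(d) are correct and are essentially the paper's own route: the paper likewise establishes $\mathcal{G}\in\mathcal{D}'_0(L(E))$, bounded equicontinuity via the estimate (\ref{mos-def}), the commutation relations and the identity (\ref{dkenk}), and then feeds everything into Theorem \ref{fundamentalna} to obtain (C.S.1) and the identification of the generator as $C^{-1}AC$ (delegating the contour bookkeeping to the proof of \cite[Theorem 3.1.27]{knjigah}, which is what you flesh out). The genuine gap is in your step (e), the verification of (C.S.2).

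First, (C.S.2) only supplies the hypothesis $\mathcal{G}(\varphi)x'=0$ for $\varphi\in\mathcal{D}_0$, not for all $\varphi\in\mathcal{D}$, and this cannot be upgraded: every $\varphi$ supported in $[0,\infty)$ vanishes to infinite order at $0$, so such $\varphi$ cannot see any distribution supported at the origin. This is fatal for the uniqueness claim as you state it: the map $f\mapsto T$, $T(\varphi)=(-i)\int_{\Gamma}\hat{\varphi}(\lambda)f(\lambda)\,d\lambda$, is \emph{not} injective when tested only against $\mathcal{D}_0$. Indeed, any polynomial $f$ of degree $\leq\lfloor\alpha\rfloor$ is compatible with the growth bound $(1+|\lambda|)^{\alpha}$ and produces a finite combination of $\delta^{(j)}$'s, which annihilates every $\varphi\in\mathcal{D}_0$. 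Hence the most your Laplace-uniqueness argument can yield is that $\lambda\mapsto\langle x^{\ast},(\lambda-A)^{-1}Cx'\rangle$ is a polynomial for each $x^{\ast}\in E^{\ast}$; to conclude $x'=0$ one must additionally exploit the resolvent equation $(\lambda-A)(\lambda-A)^{-1}Cx'=Cx'$ and match coefficients --- and this extra step, which your proposal omits, is exactly the mathematical content that the paper handles by a different device: it invokes \cite[Theorem 4.4(ii)]{sic} to represent $\mathcal{G}$ locally through a non-degenerate $n$-times integrated $C$-semigroup $(S_{n}(t))_{t\in[0,\tau')}$ with $n>\alpha+1$, applies Lemma \ref{polinomi} to the relation $\int_{0}^{\infty}\varphi^{(n)}(t)S_{n}(t)x'\,dt=0$, $\varphi\in\mathcal{D}_{(0,\tau')}$, to get that $S_{n}(t)x'=\sum_{j=0}^{n-1}t^{j}x_{j}$ is a polynomial (this polynomial is precisely the ``invisible kernel'' above), and then uses $t=0$ together with the integrated-semigroup identity $A\sum_{j=1}^{n-1}\frac{t^{j+1}}{j+1}x_{j}=\sum_{j=1}^{n-1}t^{j}x_{j}-\frac{t^{n}}{n!}Cx'$ and non-degeneracy of $(S_{n}(t))$ to force $x'=0$. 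Second, even setting the $\mathcal{D}_0$ issue aside, the uniqueness statement you invoke is not off-the-shelf: recovering $f$ from $T$ requires pairing $T$ against functions of the form $t^{m-1}e^{-\mu t}\mathbf{1}_{[0,\infty)}(t)$ (with $m>\alpha+1$, so that pushing $\Gamma$ to the right converges against the exponentially growing contour length), and these are neither compactly supported nor smooth at $0$, so this pairing needs its own justification. As written, ``uniqueness of the Laplace transform for scalar distributions of sub-exponential growth'' is a placeholder for an argument that, once made precise on $\mathcal{D}_0$, has a nontrivial kernel that must be eliminated by operator-theoretic means.
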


\begin{proof}
Without loss of generality, we may assume that, for every $x\in E,$ the mapping $\lambda\mapsto (\lambda-A)^{-1}Cx$ is analytic
on some open neighborhood of the region $e(a,b);$ cf. \cite[Proposition 2.16]{sic}. By the argumentation given in the proof of \cite[Theorem 3.1.27]{knjigah}, it readily follows that $\mathcal{G}\in {\mathcal D}^{\prime}_{0}(L(E)),$ as well as that
$\mathcal{G}(\varphi)C^{l}((z-A)^{-1}C)^{m}=C^{l}((z-A)^{-1}C)^{m}\mathcal{G}(\varphi),$ $\varphi \in {\mathcal D}$ ($m,\ l\in {\mathbb N}_{0}$).
Suppose that $B$ is a bounded subset of ${\mathcal D}.$ Then there exists $\tau>0$ such that $B$ is contained and bounded in ${\mathcal D}_{[-\tau,\tau]}.$ Since
\begin{equation}\label{mos-def}
\hat{\varphi}(\lambda)=\frac{(-1)^{n}}{2\pi \lambda^{n}}\int^{\infty}_{-\infty}e^{\lambda t}\varphi^{(n)}(t)\, dt,\quad \varphi \in {\mathcal D},\ \lambda \in {\mathbb C} \setminus \{0\} ,
\end{equation}
we obtain that for each $n\in {\mathbb N}$ there exists $c_{n}>0$ such that for each $\varphi \in B$ we have $|\hat{\varphi}(\lambda)|\leq c_{n}e^{\tau \Re \lambda}|\lambda|^{-n},$ $\lambda \in {\mathbb C} \setminus \{0\} .$ Keeping in mind this estimate and the equicontinuity of the family $\{(1+|\lambda|)^{-\alpha}(\lambda- A)^{-1}C : \lambda \in e(a,b)\}$, it can be simply proved that ${\mathcal G}$ is boundedly equicontinuous. Similarly as in the proof of \cite[Theorem 3.1.27]{knjigah}, we have that $\mathcal{G}(\varphi)A\subseteq A{\mathcal G}(\varphi),$ $\varphi \in {\mathcal D}$ and (\ref{dkenk}) holds. By Theorem \ref{fundamentalna}(i), we get that ${\mathcal G}$ satisfies (C.S.1).
 In order to prove (C.S.2), suppose that ${\mathcal G}(\varphi)x'=0$, $\varphi \in {\mathcal D}_{0}$ for some element $x'\in E.$ Owing to \cite[Theorem 4.4(ii)]{sic}, we know that there exist numbers $n\in {\mathbb N},$  $n>\alpha+1,$ and $\tau'>0$ such that the operator $A$ ($C^{-1}AC$) is a subgenerator (the integral generator) of a locally equicontinuous non-degenerate $n$-times integrated $C$-semigroup $(S_{n}(t))_{t\in [0,\tau')},$ given by
$$
S_{n}(t)x=\frac{1}{2\pi i}\int_{\Gamma}e^{\lambda t}\lambda^{-n}\bigl(\lambda-A\bigr)^{-1}Cx\, d\lambda,\quad x\in E,\ t\in [0,\tau').
$$
Integration by parts implies that $\int^{\infty}_{0}\varphi^{(n)}(t)e^{\lambda t}\, dt=(-1)^{n}\lambda^{n}\int^{\infty}_{0}\varphi (t)e^{\lambda t}\, dt,$ $\lambda \in {\mathbb C},$ $\varphi \in {\mathcal D}_{(0,\tau')}.$ Exploiting this equality, the Fubini theorem, and the foregoing arguments, it can be simply verified that:
$$
{\mathcal G}(\varphi)x=(-1)^{n}\int^{\infty}_{0}\varphi^{(n)}(t)S_{n}
(t)x\, dt,\quad x\in E,\ \varphi \in {\mathcal D}_{(0,\tau')}.
$$
This, in particular, holds with $x=x',$ so that Lemma \ref{polinomi} implies that there exist elements $x_{0},\cdot \cdot \cdot,x_{n-1}\in E$ such that $S_{n}(t)x'=\sum^{n-1}_{j=0}t^{j}x_{j},$ $t\in [0,\tau').$ Plugging $t=0,$ we get that $x_{0}=0.$ Hence,
$$
A\sum \limits^{n-1}_{j=1}\frac{t^{j+1}}{j+1}x_{j}=\sum_{j=1}^{n-1}t^{j}x_{j}-\frac{t^{n}}{n!}Cx',\quad t\in [0,\tau'),
$$
which implies $x_{1}=\cdot \cdot \cdot =x_{n-1}=0,$ and consequently, $x'=0,$ because $(S_{n}(t))_{t\in [0,\tau')}$ is non-degenerate.
We have proved that ${\mathcal G}$ is a (C-DS). By Theorem \ref{fundamentalna}(ii), the integral generator of ${\mathcal G}$ is the operator $C^{-1}AC.$
\end{proof}

\begin{rem}\label{filipa-1}
\begin{itemize}
\item[(i)]
In the proof of \cite[Theorem 3.1.27]{knjigah}, the equation (\ref{dkenk}) and the structural theorem for vector-valued distributions supported by a point have been essentially used in proving the property (C.S.2) for ${\mathcal G}.$ Observe that we do  not assume here that the space $E$ is admissible.
\item[(ii)] Now we would like to explain how one can prove the property (C.S.1) for ${\mathcal G}$ by using direct computations.
Let us fix two test functions $\varphi,$ $\psi \in {\mathcal D},$
an element $x\in E$ and a number $z\in \rho_{C}(A) \setminus e(a,b).$ Using the generalized resolvent equation \cite[(6)]{knjigaho}, we get that the operator family $\{(1+|\lambda|)^{-1}(\lambda- A)^{-1}C((z-A)^{-1}C)^{\lceil \alpha \rceil +1} : \lambda \in e(a,b)\}\subseteq L(E)$ is equicontinuous. Set $y:=((z-A)^{-1}C)^{\lceil \alpha \rceil +1}x.$ Making use of the equation (\ref{mos-def}) with $n=1,$ the partial integration, and the Cauchy formula, one obtains:
\begin{align}
\notag {\mathcal G}(\varphi)y&=\frac{(-1)}{2\pi i}\int_{\Gamma}\! \int_{-\infty}^{\infty}\frac{1}{\lambda}e^{\lambda t}\varphi^{\prime}(t)\bigl (\lambda-A\bigr)^{-1}Cy\, dt \, d\lambda
\\\notag & =
\frac{(-1)}{2\pi i}\int_{\Gamma}\! \int_{0}^{\infty}\frac{1}{\lambda}e^{\lambda t}\varphi^{\prime}(t)\bigl (\lambda-A\bigr)^{-1}Cy\, dt\, d\lambda
\\\notag & =\frac{1}{2\pi i}\int_{\Gamma}\Biggl[ \frac{\varphi (0)}{\lambda} + \int^{\infty}_{0}e^{\lambda t}\varphi(t)\, dt \Biggr]\bigl (\lambda-A\bigr)^{-1}Cy\, d\lambda
\\\label{mos-def-1} & =\frac{1}{2\pi i}\int_{\Gamma}\! \int^{\infty}_{0}e^{\lambda t}\varphi(t)\bigl (\lambda-A\bigr)^{-1}Cy\, dt \, d\lambda,\quad \varphi \in {\mathcal D}.
\end{align}
This simply implies
\begin{align}\label{mos-def-2}
{\mathcal G}(\varphi \ast_{0}\psi)y=\frac{1}{2\pi i}\int_{\Gamma} \  \int_{0}^{\infty} \int_{0}^{\infty} e^{\lambda (t+s)}\varphi(t)\psi (s)\bigl (\lambda-A\bigr)^{-1}Cy\, dt\, ds\, d\lambda.
\end{align}
Let $\theta \in (0,\pi/2)$ be a fixed angle. Considering, for a sufficiently large number $R>0,$ the positively oriented curve $\Gamma':=\Gamma_{1}' \cup \Gamma_{2}' \cup \Gamma_{3}' \cup \Gamma_{4}',$ where
$\Gamma_{1}':=\{t-iR\cos \theta : t\in [-R\sin \theta , a^{-1}\ln (R\cos \theta)]\},$  $\Gamma_{2}':=\{\lambda \in \Gamma: |\Im \lambda|\leq R\cos \theta\},$ $\Gamma_{3}':=\{t+iR\cos \theta : t\in [-R\sin \theta , a^{-1}\ln (R\cos \theta)]\}$ and
$\Gamma_{4}':=\{Re^{i\vartheta} : \vartheta \in [\theta +(\pi/2),(3\pi/2)-\theta]\},$ and applying the Cauchy formula, we get that
\begin{equation}\label{mos-def-3}
\frac{1}{2\pi i}\int_{\Gamma}\frac{\int^{\infty}_{0}e^{\lambda t}\varphi (t)\, dt}{\lambda-\eta}\, d\lambda=0,\quad \varphi \in {\mathcal D},\ \eta \notin \Gamma.
\end{equation}
Let $\Gamma_{1}$ be the positively oriented boundary of the region $e(a_{1},b_{1}),$ where $0<a_{1}<a$ and $b_{1}>b.$ Noticing that, for every fixed number $\lambda \in \Gamma,$ the operator $A=\lambda$ generates the strongly continuous semigroup $(e^{\lambda t})_{t\geq 0}$ on ${\mathbb C},$ and that every $C$-distribution semigroup on a Banach space is uniquely determined by its generator, we can apply \cite[Theorem 3.1.27]{knjigah} in order to see that
\begin{equation}\label{integral-lkj}
(-i)\int_{\eta \in \Gamma_{1}}\frac{\hat{\psi}(\eta)}{\eta-\lambda}\, d\eta=\int^{\infty}_{0}e^{\lambda t}\psi(t)\, dt.
\end{equation}
Set $\hat{\varphi}_{+}(\lambda):=\int^{\infty}_{0}e^{\lambda t}\varphi (t)\, dt,$ $\lambda \in {\mathbb C}.$
Using the Cauchy formula, (\ref{mos-def-1}), (\ref{mos-def-3})-(\ref{integral-lkj}), the resolvent equation, and the Fubini theorem, we obtain:
\begin{align*}
{\mathcal G}(\varphi ){\mathcal G}(\psi)y&=\frac{(-1)}{2\pi}\int_{\Gamma} \! \int_{\Gamma_{1}}\hat{\varphi}_{+}(\lambda)\hat{\psi}(\eta)\frac{\bigl(\lambda-A\bigr)^{-1}C^{2}y-\bigl(\eta-A\bigr)^{-1}C^{2}y}{\eta -\lambda}\, d\eta \, d\lambda
\\& =\frac{1}{2\pi i}\int_{\Gamma}\! \int^{\infty}_{0}\hat{\varphi}_{+}(\lambda)e^{\lambda t}\psi(t) \bigl(\lambda-A\bigr)^{-1}C^{2}y\, dt\, d\lambda
\\ &-\frac{1}{2\pi}\int_{\Gamma_{1}} \!  \Biggl( \int_{\Gamma}\frac{\int^{\infty}_{0}e^{\lambda t}\varphi (t)\, dt}{\lambda-\eta}\, d\lambda\Biggr) \hat{\psi}(\eta)\bigl(\eta-A\bigr)^{-1}C^{2}y\, d\eta
\\&=\frac{C}{2\pi i}\int_{\Gamma}\! \int^{\infty}_{0}\hat{\varphi}_{+}(\lambda)e^{\lambda t}\psi(t) \bigl(\lambda-A\bigr)^{-1}Cy\, dt\, d\lambda
\\& =\frac{C}{2\pi i}\int_{\Gamma} \  \int_{0}^{\infty} \int_{0}^{\infty} e^{\lambda (t+s)}\varphi(s)\psi (t)\bigl (\lambda-A\bigr)^{-1}Cy\, ds\, dt\, d\lambda
\\&=C{\mathcal G}\bigl(\varphi \ast_{0} \psi \bigr)y.
\end{align*}
Because ${\mathcal G}(\zeta)$ commutes with the operator $((z-A)^{-1}C)^{\lceil \alpha \rceil +1}$ ($\zeta \in {\mathcal D}$), the above computation implies by (\ref{mos-def-2}) that ${\mathcal G}(\varphi) {\mathcal G}(\psi)x={\mathcal G}(\varphi \ast_{0} \psi)Cx$ and that (C.S.1) holds.
\item[(ii)] The assertion of \cite[Proposition 3.1.28(i)]{knjigah} continues to hold in locally convex spaces.
\end{itemize}
\end{rem}

In the remaining part of this section, we shall reconsider the definition of a regular distribution semigroup given by J. L. Lions \cite{li121} and prove some results on dense (C-DS)'s ((C-UDS)'s of $\ast$-class).
Suppose that $\mathcal{G}\in\mathcal{D}'_0(L(E))$ ($\mathcal{G}\in\mathcal{D}'^{\ast}_0(L(E))$) is boundedly equicontinuous. We analyze the following conditions for ${\mathcal G}$:

$(d_1)$:\quad $\mathcal{G}(\varphi*\psi)C=\mathcal{G}(\varphi)\mathcal{G}(\psi)$, $\varphi,\,\psi\in\mathcal{D}_0$ ($\varphi,\,\psi\in\mathcal{D}^{\ast}_0$),

$(d_2)$:\quad the same as (C.S.2),

$(d_3)$:\quad $\mathcal{R}(\mathcal{G})$ is dense in $E$,

$(d_4)$:\quad for every $x\in\mathcal{R}(\mathcal{G})$, there exists a function $u_x\in C([0,\infty):E)$ so that
$u_x(0)=Cx$ and $\mathcal{G}(\varphi)x=\int_0^{\infty}\varphi(t)u_x(t)\,dt$, $\varphi\in\mathcal{D}$ ($\varphi\in\mathcal{D}^{\ast}$),

$(d_5)$:\quad if $(d_2)$ holds then $(d_5)$ means $G(\varphi_+)C=\mathcal{G}(\varphi)$, $\varphi\in\mathcal{D}$ ($\varphi\in\mathcal{D}^{\ast}$).

Using the same arguments as in the Banach space case (\cite{knjigah}), we can prove the following theorem.

\begin{thm}\label{wu-tang}
\begin{itemize}
\item[(i)] Suppose that $\mathcal{G}\in\mathcal{D}'_0(L(E))$ ($\mathcal{G}\in\mathcal{D}'^{\ast}_0(L(E))$) is boundedly equicontinuous and $\mathcal{G}C=C\mathcal{G}$.
Then $\mathcal{G}$ is a $($C-DS$)$ ($($C-UDS$)$ of $\ast$-class) iff $(d_1)$, $(d_2)$ and $(d_5)$ hold.
\item[(ii)] Suppose that $\mathcal{G}\in\mathcal{D}'_0(L(E))$ ($\mathcal{G}\in\mathcal{D}'^{\ast}_0(L(E))$) satisfies $(d_1)$, $(d_2)$, $(d_3)$, $(d_4)$
and $\mathcal{G}C=C\mathcal{G}$. If  ${\mathcal G}$ is boundedly equicontinuous, then
$\mathcal{G}$ is a $($C-DS$)$ ($($C-UDS$)$ of $\ast$-class).
\item[(iii)] Let $\mathcal{G}$ be a $($C-DS$)$ ($($C-UDS$)$ of $\ast$-class). Then $\mathcal{G}$ satisfies $(d_4)$.
\end{itemize}
\end{thm}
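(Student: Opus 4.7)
My plan is to prove the three items in the order (iii), (i), (ii), adapting the Banach-space arguments of~\cite{knjigah}. Throughout, the hypotheses $\mathcal{G}C=C\mathcal{G}$, bounded equicontinuity of $\mathcal{G}$, and the inclusions $\mathcal{R}(\mathcal{G})\subseteq D(G(T))$, $CG(T)\subseteq G(T)C$ (established earlier) will be used without further mention.

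\emph{Part (iii).} Fix $x\in\mathcal{R}(\mathcal{G})$ and write $x=\mathcal{G}(\psi)y$ with $\psi\in\mathcal{D}_{0}$ (resp.\ $\mathcal{D}^{\ast}_{0}$) and $y\in E$. Define $u_{x}(t):=\mathcal{G}(\tau_{t}\psi)Cy$ for $t\geq 0$, where $\tau_{s}\psi:=\psi(\cdot-s)$. Continuity of $t\mapsto\tau_{t}\psi$ from $[0,\infty)$ into $\mathcal{D}_{0}$ (resp.\ $\mathcal{D}^{\ast}_{0}$) gives $u_{x}\in C([0,\infty):E)$, and $u_{x}(0)=C\mathcal{G}(\psi)y=Cx$. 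For $\psi\in\mathcal{D}_{0}$ the pointwise identity $\varphi*_{0}\psi=\varphi_{+}*\psi=\int_{0}^{\infty}\varphi(s)\tau_{s}\psi\,ds$ holds as a Riemann integral in $\mathcal{D}_{0}$; applying $\mathcal{G}$, interchanging with the integral by bounded equicontinuity, and invoking (C.S.1) yields
\[
\mathcal{G}(\varphi)x=\mathcal{G}(\varphi*_{0}\psi)Cy=\int_{0}^{\infty}\varphi(s)u_{x}(s)\,ds,\qquad \varphi\in\mathcal{D}\ (\varphi\in\mathcal{D}^{\ast}).
\]

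\emph{Part (i).} Necessity: $(d_{1})$ is (C.S.1) restricted to $\mathcal{D}_{0}\times\mathcal{D}_{0}$, $(d_{2})$ is (C.S.2) verbatim, and $(d_{5})$ is the known identity $G(\psi_{+})C=\mathcal{G}(\psi)$ recorded in the text preceding Proposition~\ref{isto}. For sufficiency, given $(d_{1}),(d_{2}),(d_{5})$, I verify (C.S.1) via the chain
\[
\mathcal{G}(\varphi)\mathcal{G}(\psi)z=G(\varphi_{+})G(\psi_{+})C^{2}z=G(\varphi_{+}*\psi_{+})C^{2}z=G\bigl((\varphi*_{0}\psi)_{+}\bigr)C^{2}z=\mathcal{G}(\varphi*_{0}\psi)Cz,
\]
valid for every $z\in E$: the first equality uses $(d_{5})$ combined with $CG(T)\subseteq G(T)C$ and the inclusion $R(C)\subseteq D(G(T))$ supplied by $(d_{5})$; the second invokes the semigroup inclusion $G(\varphi_{+})G(\psi_{+})\subseteq G(\varphi_{+}*\psi_{+})$ of Proposition~\ref{isto}(ii), whose domain hypothesis holds because $G(\psi_{+})C^{2}z=\mathcal{G}(\psi)Cz=C\mathcal{G}(\psi)z\in R(C)\subseteq D(G(\varphi_{+}))$; the third is the pointwise identity $\varphi_{+}*\psi_{+}=(\varphi*_{0}\psi)_{+}$ of locally integrable functions, checked directly from the definitions; and the fourth applies $(d_{5})$ to $\varphi*_{0}\psi\in\mathcal{D}$. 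Property $(d_{2})$ then supplies (C.S.2).

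\emph{Part (ii) and the main obstacle.} Here $(d_{5})$ is unavailable, so the strategy is to establish (C.S.1) first on the dense subspace $\mathcal{R}(\mathcal{G})$ and then extend. For $x=\mathcal{G}(\zeta)y\in\mathcal{R}(\mathcal{G})$ with $\zeta\in\mathcal{D}_{0}$, the integral representation $(d_{4})$ together with the Riemann decomposition $\varphi*_{0}\psi=\int_{0}^{\infty}\varphi(s)\tau_{s}\psi\,ds$ and a Fubini-type exchange reduces the desired identity $\mathcal{G}(\varphi*_{0}\psi)Cx=\mathcal{G}(\varphi)\mathcal{G}(\psi)x$ to repeated application of $(d_{1})$ to $\mathcal{D}_{0}\times\mathcal{D}_{0}$-pairs built from $\zeta$ and translates $\tau_{s}\psi$. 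The density $(d_{3})$ together with continuity of all $L(E)$-valued operations involved then extends the identity to all of $E$; $(d_{2})$ gives (C.S.2). The main obstacle is exactly this extension of the semigroup identity from $\mathcal{D}_{0}\times\mathcal{D}_{0}$ to $\mathcal{D}\times\mathcal{D}$: it requires commuting $\mathcal{G}$ with vector-valued Riemann integrals in the sequentially complete locally convex topology, and in the Roumieu case one must additionally verify that $\{\tau_{s}\psi:s\in K\}$ is bounded in some $\mathcal{D}^{\{M_{p}\},h}_{K'}$, which follows from translation-invariance of the $(M_{p},h)$-seminorms.
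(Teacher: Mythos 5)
Your parts (iii) and (i) are correct and are, in substance, exactly the Banach-space arguments of \cite{knjigah} that the paper invokes without writing out (its ``proof'' consists of the sentence preceding the statement). One repair is needed in (i): the appeal to Proposition~\ref{isto}(ii) is formally circular, since that proposition is stated under the hypothesis that $\mathcal{G}$ is a (C-DS), which is what you are proving. The circularity is harmless but should be dissolved explicitly: the inclusion $G(\varphi_{+})G(\psi_{+})\subseteq G(\varphi_{+}\ast\psi_{+})$ follows from the definition of $G(\cdot)$ alone, since $\mathcal{G}(T\ast\chi)x=\mathcal{G}(\chi)y$ and $\mathcal{G}(S\ast\chi)y=\mathcal{G}(\chi)w$ for all $\chi\in\mathcal{D}_{0}$ give $\mathcal{G}((S\ast T)\ast\chi)x=\mathcal{G}(T\ast(S\ast\chi))x=\mathcal{G}(S\ast\chi)y=\mathcal{G}(\chi)w$; only $(d_{2})$ is needed for single-valuedness, and no form of (C.S.1) enters. (It is also worth noting, as you implicitly found, that your sufficiency argument in (i) uses only $(d_{2})$ and $(d_{5})$, never $(d_{1})$; this is consistent with the Banach case and becomes important below.)

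Part (ii), however, contains a genuine gap, and it sits precisely at the step you flag as the ``main obstacle'' while misdiagnosing its nature. The decomposition $\varphi\ast_{0}\psi=\int_{0}^{\infty}\varphi(s)\tau_{s}\psi\,ds$ is \emph{false} for general $\psi\in\mathcal{D}$: take $\operatorname{supp}(\varphi)\subseteq[1,2]$ and $\operatorname{supp}(\psi)\subseteq[-2,-1]$; then $\varphi\ast_{0}\psi\equiv 0$ while the right-hand side at $t=0$ equals $\int\varphi(s)\psi(-s)\,ds\neq 0$ in general. It holds only when $\operatorname{supp}(\psi)\subseteq[0,\infty)$ (which is why your part (iii), where $\psi\in\mathcal{D}_{0}$, is fine). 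Correspondingly, the translates $\tau_{s}\psi$ lie in $\mathcal{D}_{0}$ only for large $s$, so $(d_{1})$ cannot be applied to the ``pairs built from $\zeta$ and $\tau_{s}\psi$'' you propose; and $(d_{3})$ is density in the vector $x$, which cannot upgrade the test-function slots from $\mathcal{D}_{0}$ to $\mathcal{D}$, since $\mathcal{D}_{0}$ is not dense in $\mathcal{D}$. The Roumieu boundedness of translates and the interchange of $\mathcal{G}$ with Riemann integrals are routine; the missing idea is that it is the function $\zeta$ in the representation $x=\mathcal{G}(\zeta)y$, not $\psi$, whose forward translates remain in $\mathcal{D}_{0}$. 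Concretely: testing $\mathcal{G}(\varphi)x=\mathcal{G}(\varphi\ast\zeta)Cy=\int_{0}^{\infty}\varphi(s)\mathcal{G}(\tau_{s}\zeta)Cy\,ds$ (valid for $\varphi\in\mathcal{D}_{0}$ by $(d_{1})$) against $(d_{4})$ identifies $u_{x}(t)=\mathcal{G}(\tau_{t}\zeta)Cy$, $t\geq 0$; applying the same identification to $\mathcal{G}(\tau_{t}\zeta)Cy\in\mathcal{R}(\mathcal{G})$ gives $u_{u_{x}(t)}(s)=\mathcal{G}(\tau_{t+s}\zeta)C^{2}y$; then two applications of $(d_{4})$ — which, crucially, holds for \emph{arbitrary} $\varphi,\psi\in\mathcal{D}$ — and Fubini yield $\mathcal{G}(\varphi)\mathcal{G}(\psi)x=\int_{0}^{\infty}(\varphi\ast_{0}\psi)(r)\,\mathcal{G}(\tau_{r}\zeta)C^{2}y\,dr=\mathcal{G}(\varphi\ast_{0}\psi)Cx$, after which $(d_{3})$ extends (C.S.1) to all of $E$. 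Alternatively, and most economically: prove only the mixed case $\varphi\in\mathcal{D}_{0}$, $\psi\in\mathcal{D}$, extend it to $E$ by $(d_{3})$ — this is exactly $(d_{5})$, since $\psi_{+}\ast\varphi=\varphi\ast_{0}\psi$ for $\varphi\in\mathcal{D}_{0}$ — and then quote your own part (i), whose sufficiency direction needs only $(d_{2})$ and $(d_{5})$. As written, your sketch of (ii) would not compile into a proof; with either of these two repairs it does, and it then matches the Banach-space scheme the paper defers to.
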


\section{Stationary dense operators in locally convex spaces}
Following P. C. Kunstmann \cite{ku101}, we introduce the notion of a stationary dense operator in a sequentially complete locally convex space as follows.

\begin{defn}\label{pc-kunst}
A closed linear operator $A$ is said to be stationary dense iff
$$
n(A):=\inf\bigl\{k\in\mathbb{N}_0:D(A^m)\subseteq\overline{D(A^{m+1})}\text{ for all }m\geq k\bigr\}<\infty.
$$
\end{defn}

The abstract Cauchy problem
\[(ACP_1):\left\{
\begin{array}{l}
u\in C([0,\tau):[D(A)])\cap C^1([0,\tau):E),\\
u'(t)=Au(t),\;t\in[0,\tau),\\
u(0)=x,
\end{array}
\right.
\]
where $A$ is a closed linear operator on $E$ and $0<\tau \leq \infty,$
has been analyzed in a great number of research papers and monographs (see e.g. \cite{a22}-\cite{b42},
\cite{cha}-\cite{c62}, \cite{d81}-\cite{engel}, \cite{hiber1}-\cite{ki90},
\cite{komatsulocally}-\cite{knjigaho}, \cite{ko98},
\cite{ptica}-\cite{tica}, \cite{ku101}-\cite{li121},
\cite{me152}, \cite{pa11} and \cite{ush}-\cite{yosi}).
By a mild solution of problem $(ACP_1)$ we mean any continuous function $t\mapsto u(t;x),$ $t\in [0,\tau)$ such that $A\int^{t}_{0}u(s;x)\, ds=u(t;x)-x,$ $t\in [0,\tau).$

\begin{prop}\label{pc-kunst-isto}
Let $0<\tau \leq \infty $ and $n\in {\mathbb N}_{0}.$
\begin{itemize}
\item[(i)]
Suppose that the abstract Cauchy problem $(ACP_1)$ has a unique mild solution $u(t;x)$ for all $x\in D(A^{n}).$
Then $A$ is stationary dense and $n(A)\leq n.$
\item[(ii)] Suppose that $(S_{n}(t))_{t\in [0,\tau)}$ is a locally equicontinuous
$n$-times integrated semigroup generated by $A.$ Then $A$ is stationary dense and $n(A)\leq n.$
\end{itemize}
\end{prop}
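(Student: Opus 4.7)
My plan is to prove both (i) and (ii) by an explicit approximation scheme. For $x\in D(A^m)$ with $m\geq n$, I shall exhibit a family $(y_h)_{h>0}\subseteq D(A^{m+1})$ converging to $x$ as $h\to 0^+$; this yields $D(A^m)\subseteq\overline{D(A^{m+1})}$ for every $m\geq n$, hence $n(A)\leq n$.

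For part (ii) I would set $y_h := \frac{(n+1)!}{h^{n+1}}\int_0^h S_n(s)x\,ds$. Membership $y_h\in D(A^{m+1})$ is the first step: the functional equation $A\int_0^h S_n(s)x\,ds = S_n(h)x-\frac{h^n}{n!}x$, together with the commutativity $S_n(t)A^k\subseteq A^k S_n(t)$ on $D(A^k)$ (proved by induction on $k$ from the case $k=1$), puts $A\int_0^h S_n(s)x\,ds$ inside $D(A^m)$, so that $\int_0^h S_n(s)x\,ds\in D(A^{m+1})$. Iterating the functional equation on $D(A^m)$ yields the Taylor-type identity
\[
S_n(t)x=\sum_{j=0}^{m-1}\frac{t^{n+j}}{(n+j)!}A^j x+\int_0^t \frac{(t-s)^{m-1}}{(m-1)!}S_n(s)A^m x\,ds,
\]
and one further integration in $t$, combined with Fubini applied to the remainder, gives for every continuous seminorm $p$ on $E$,
\[
p\Bigl(\int_0^h S_n(t)x\,dt - \tfrac{h^{n+1}}{(n+1)!}x\Bigr)\leq C_p h^{n+2} + \tfrac{h^{m+1}}{(m+1)!}\sup_{s\in[0,h]}p\bigl(S_n(s)A^m x\bigr).
\]
Local equicontinuity of $(S_n(t))$ together with $S_n(0)A^m x=0$ forces the supremum to tend to $0$, and since $m\geq n$ the whole right-hand side is $o(h^{n+1})$; therefore $y_h\to x$.

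Part (i) proceeds by the same scheme, with iterated Riemann integrals of $u(\cdot;x)$ in place of $S_n(\cdot)$. Define $U_0(t):=u(t;x)$ and $U_k(t):=\int_0^t U_{k-1}(s)\,ds$ for $k\geq 1$. Closedness of $A$ and the mild-solution identity $A\int_0^t u(s;x)\,ds=u(t;x)-x$ yield by induction
\[
A^j U_k(t) = U_{k-j}(t) - \sum_{i=1}^{j}\frac{t^{k-i}}{(k-i)!}A^{i-1}x,\qquad 1\leq j\leq k,\ x\in D(A^{j-1}).
\]
Setting $k=m+1$ and $j=m$ shows $A^m U_{m+1}(h)\in D(A)$ for $x\in D(A^m)$, hence $U_{m+1}(h)\in D(A^{m+1})$. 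A direct induction on $k$, using only continuity of $u(\cdot;x)$ at the origin, produces the bound
\[
p\Bigl(U_k(h) - \tfrac{h^k}{k!}x\Bigr)\leq \tfrac{h^k}{k!}\sup_{s\in[0,h]}p\bigl(u(s;x)-x\bigr),
\]
valid for every continuous seminorm $p$ and every $k\geq 1$; applied with $k=m+1$ it gives $y_h := \frac{(m+1)!}{h^{m+1}}U_{m+1}(h) \in D(A^{m+1})$ with $y_h\to x$.

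The main obstacle is the careful bookkeeping of domain inclusions and the repeated commutation of $A$ with vector-valued Riemann integrals in the locally convex setting: at each step the integrand must be shown to take values in the appropriate $D(A^k)$, and $A$ must be pulled under the integral by invoking closedness together with sequential completeness of $E$. Once these points are addressed, the argument is essentially the one given by Kunstmann \cite{ku101} in the Banach space case.
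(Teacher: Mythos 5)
Your proof is correct. For part (i) it is essentially the paper's own argument: the paper simply defers to Kunstmann's Lemma 1.7 in \cite{ku101}, whose iterated-integral scheme is exactly what you spell out (and, like Kunstmann, you never actually use uniqueness of the mild solution --- existence for all $x\in D(A^{m})\subseteq D(A^{n})$ suffices). For part (ii), however, you take a genuinely different route: the paper reduces (ii) to (i) by quoting from \cite{a22} that $u(t;x)=S_{n}(t)A^{n}x+\sum_{j=0}^{n-1}\frac{t^{j}}{j!}A^{j}x$ is the unique mild solution of $(ACP_1)$ for $x\in D(A^{n})$, whereas your direct construction $y_{h}=\frac{(n+1)!}{h^{n+1}}\int_{0}^{h}S_{n}(s)x\,ds$ bypasses this imported existence-and-uniqueness statement (a Banach-space result whose transfer to the SCLCS setting the paper leaves tacit) and uses only the defining identity $A\int_{0}^{t}S_{n}(s)x\,ds=S_{n}(t)x-\frac{t^{n}}{n!}x$, the commutation $S_{n}(t)A\subseteq AS_{n}(t)$, and strong continuity; this makes the locally convex case self-contained, at the price of redoing by hand the Taylor expansion that the paper's reduction gets for free. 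Two minor repairs, neither of which damages the argument: in the displayed identity of your part (i) the polynomial part is misindexed --- iterating $AU_{k}(t)=U_{k-1}(t)-\frac{t^{k-1}}{(k-1)!}x$ gives $A^{j}U_{k}(t)=U_{k-j}(t)-\sum_{i=0}^{j-1}\frac{t^{k-j+i}}{(k-j+i)!}A^{i}x$, so $A^{i}x$ pairs with $t^{k-j+i}$ rather than with $t^{k-i-1}$ as you wrote (harmless, since both versions involve only $A^{0}x,\dots,A^{j-1}x$, so the membership conclusion $U_{m+1}(h)\in D(A^{m+1})$ and the convergence bound stand); and in part (ii) your remainder estimate presupposes $S_{n}(0)A^{m}x=0$, which fails precisely when $n=m=0$ --- but there $S_{0}(0)=I$ and $\frac{1}{h}\int_{0}^{h}S_{0}(s)x\,ds\to x$ directly, while for $m>n$ mere boundedness of $\sup_{s\in[0,h]}p(S_{n}(s)A^{m}x)$ (continuity of a fixed orbit on a compact interval, no equicontinuity needed) already makes the remainder $o(h^{n+1})$.
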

\begin{proof}
One has to use the arguments given in that of \cite[Lemma 1.7]{ku101} (cf. also \cite[Remark 1.2(i)]{ku101}) and the fact that for any locally equicontinuous
$n$-times integrated semigroup $(S_{n}(t))_{t\in [0,\tau)}$ generated by $A$ the abstract Cauchy problem
$(ACP_1)$ has a unique mild solution for all $x\in D(A^{n}),$ given by
$
u(t;x)=S_{n}(t)A^{n}x+\sum^{n-1}_{j=0}\frac{t^{j}}{j!}A^{j}x,$ $t\in [0,\tau)$ (\cite{a22}).
\end{proof}
\begin{lem}\label{lema0} Let $A$ be a closed operator in sequentially complete locally convex space and let $(\lambda_n)\in\rho(A)$ be a sequence such that $\lim_{n\rightarrow\infty}|\lambda_n|=\infty$ and there exist $C>0$ and $k\geq-1$ such that for every $p\in\circledast$, there exists $q\in\circledast$, such that $p(R(\lambda_n:A)x)\leq C|\lambda_n|^kq(x)$ for all $x\in E$ and $n\in\mathbb N$. Then $A$ is stationary dense with $n(A)\leq k+2$.\end{lem}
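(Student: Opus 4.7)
The plan is to show that for every $m \geq k+2$ and every $x \in D(A^m)$, the point $x$ lies in $\overline{D(A^{m+1})}$; this immediately yields $n(A)\leq k+2$.

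First, I will set $x_n := \lambda_n (\lambda_n - A)^{-1} x$ for $n\in \NN$, as a natural candidate for an approximating sequence. Since the resolvent $(\lambda_n - A)^{-1}$ maps $D(A^j)$ into $D(A^{j+1})$ for every $j \in \NN_0$, one has $x_n \in D(A^{m+1})$, so the work reduces to establishing $x_n \to x$ in $E$.

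To analyze the error, I will iterate the elementary identity $(\lambda_n - A)^{-1} y = \lambda_n^{-1} y + \lambda_n^{-1} (\lambda_n - A)^{-1} A y$, valid for every $y \in D(A)$, exactly $m-1$ times. This is legitimate because $x \in D(A^m)$ implies $A^i x \in D(A)$ for $i = 0, 1, \dots, m-1$. The resulting telescoping identity will read
$$x_n - x \;=\; (\lambda_n - A)^{-1} A x \;=\; \sum_{i=1}^{m-1} \frac{A^i x}{\lambda_n^i} \;+\; \frac{(\lambda_n - A)^{-1} A^m x}{\lambda_n^{m-1}}.$$
For an arbitrary $p \in \circledast$, picking $q \in \circledast$ from the hypothesis gives
$$p(x_n - x) \;\leq\; \sum_{i=1}^{m-1} \frac{p(A^i x)}{|\lambda_n|^i} \;+\; C\,|\lambda_n|^{\,k-m+1}\, q(A^m x).$$
The first sum tends to $0$ because $|\lambda_n|\to\infty$ and every exponent $-i$ is negative. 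The last term tends to $0$ precisely when $k-m+1<0$, i.e.\ when $m\geq k+2$, which is our standing assumption. Hence $p(x_n - x)\to 0$ for every continuous seminorm $p$, so $x_n \to x$ in the topology of $E$ and $x\in\overline{D(A^{m+1})}$, as required.

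I do not expect a real obstacle here: the only insight is that the resolvent expansion telescopes so that the final remainder carries the factor $|\lambda_n|^{-(m-1)}$, which cancels the growth $|\lambda_n|^k$ supplied by the hypothesis to yield $|\lambda_n|^{k-m+1}$, and this is a null sequence precisely when $m\geq k+2$. The borderline case $k=-1$ degenerates to a single use of the resolvent estimate (no iteration is needed), while larger values of $k$ simply call for more iterations and correspondingly higher regularity of $x$. No appeal to Lemma~\ref{polinomi}, convolution machinery, or the Laplace transform is needed.
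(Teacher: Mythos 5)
Your proof is correct and takes essentially the same route as the paper: approximate $x\in D(A^{m})$ by $x_{n}=\lambda_{n}R(\lambda_{n}:A)x\in D(A^{m+1})$ and use the resolvent bound to show $x_{n}\to x$ in every seminorm. If anything, your version is the more complete one, since the paper only treats $m=k+2$ and asserts the decay estimate $p(R(\lambda_{n}:A)Ax)\leq C'|\lambda_{n}|^{-1}q(Ax)$ outright, whereas for $k\geq 0$ this requires precisely the telescoping resolvent expansion you write down.
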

\begin{proof}
Let $x\in D(A^{k+1})$. Then for every $p\in\circledast$, there exists $q\in\circledast$, such that $p(\lambda_n R(\lambda_n:A)x)\leq \|R(\lambda_n:A)\| q(Ax)$. Now
for $x\in D(A^{k+2})$, it follows $\lambda_n R(\lambda_n:A)x\in D(A^{k+3})$ for all $\mathbb N$. Furthermore $$p(\lambda_n R(\lambda_n:A)x-x)=p(R(\lambda_n:A)Ax)\leq \frac{C'}{|\lambda_n|}q(Ax).$$ Hence, $$x=\lim\limits_{n\longrightarrow\infty}\lambda_n R(\lambda_n:A)x$$ and $x$ belongs to the closure of $D(A^{k+3})$, which means that $A$ is stationary dense and $n(A)\leq k+2$.
\end{proof}

We say that the operator $A$ satisfy the condition $(EQ)$ if:\\
$A_{\infty}$ is a generator of an equicontinuous semigroup $T_{\infty}(t)$ in $D_{\infty}(A)$, i.e. for every $p\in\circledast$ there exists $q\in\circledast$ and $C$ such that
$$p(T_{\infty}(t)x)\leq C q(x),$$ for every $x\in D_{\infty}(A)$.\\

Using the results given in \cite{ku101}, ~\cite[Theorem 4.1]{ush} we can state similar results in our setting (E is sequentially complete locally convex space). Assume that $A$ is stationary dense, satisfies $(EQ)$, $n=n(A)$ and $F$ is the closure of $D(A^n)$ in $E$.
\begin{lem}\label{lema1234}
\begin{itemize}
\item[a)]$A_F$ is densely defined in $F$, where $A_F$ means the restriction of the operator $A$ on $F$;
\item[b)] \label{lema2} $\rho(A;L(E))=\rho(A_F;L(F))$ for all $\lambda\in\rho(A)$. Additionally for all $x\in E$ and $p\in\circledast$, there exist $n\in{\mathbb N}$, $C>0$ such that
$$p(R(\lambda:A)x)\leq C(1+|\lambda|)^n(p(R(\lambda:A_F)x)+1);$$
\item[c)]\label{lema3} The Fr\'echet spaces $D_{\infty}(A)$ and $D_{\infty}(A_F)$ coincide topologically and $A_{\infty}=(A_F)_{\infty}$;
\item[d)]$n(A)=\inf\{k\in{\mathbf N_0}\, :\, \overline{D(A^k)}\subset D_{\infty}(A)\}.$\end{itemize}
\end{lem}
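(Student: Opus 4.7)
The plan rests on the base observation that stationary density with $n(A)=n$ forces $\overline{D(A^k)}=F$ for every $k\ge n$: iterating the inclusion $D(A^{k+1})\subseteq D(A^k)\subseteq\overline{D(A^{k+1})}$ collapses all closures from level $n$ upward onto $\overline{D(A^n)}=F$. Part (a) is immediate from this, since for any $x\in D(A^{n+1})$ both $x$ and $Ax$ lie in $D(A^n)\subseteq F$, placing $D(A^{n+1})$ inside $D(A_F)$, and $\overline{D(A^{n+1})}=F$ then furnishes density of $D(A_F)$ in $F$.

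For part (b), I would first prove $\rho(A)\subseteq\rho(A_F)$ by approximation. Given $\lambda\in\rho(A)$ and $y\in F$, choose $y_k\in D(A^{n+1})$ with $y_k\to y$ and set $x_k:=R(\lambda,A)y_k$. The identity $AR(\lambda,A)z=\lambda R(\lambda,A)z-z$ shows inductively that $R(\lambda,A)$ maps $D(A^{n+1})$ into $D(A^{n+2})$, so that both $x_k$ and $Ax_k=\lambda x_k-y_k$ lie in $D(A^{n+1})\subseteq F$. Continuity of $R(\lambda,A)$ together with closedness of $F$ in $E$ and of $A$ then yield $x:=R(\lambda,A)y\in D(A_F)$ with $(\lambda-A_F)x=y$; injectivity is inherited, so $(\lambda-A_F)^{-1}=R(\lambda,A)_{|F}$ is automatically continuous on $F$. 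For the converse inclusion and the stated seminorm bound I would iterate
\[
R(\lambda,A)z=\lambda^{-1}z+\lambda^{-1}R(\lambda,A)Az,\qquad z\in D(A),
\]
$n+1$ times, expressing $R(\lambda,A)x$ as a polynomial in $\lambda^{-1}$ whose coefficients are $A^j x$ ($0\le j\le n$) plus a remainder that lies in $F$ and is therefore captured by $R(\lambda,A_F)$; the polynomial growth $(1+|\lambda|)^n$ and the additive constant in the bound are the natural by-products of this expansion, while the equicontinuity supplied by $(EQ)$ replaces the uniform operator bounds used in the Banach-space version.

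Part (c) is then formal. The inclusion $A_F\subseteq A$ gives $D_\infty(A_F)\subseteq D_\infty(A)$; conversely, if $x\in D_\infty(A)$ then every $A^k x$ lies in $D_\infty(A)\subseteq D(A^n)\subseteq F$, so $x\in D_\infty(A_F)$ and $A_\infty x=(A_F)_\infty x$. The defining seminorms $p_n(x)=\sum_{j=0}^n p(A^j x)$ act through $A$, which agrees with $A_F$ on the common domain, so the two Fr\'echet topologies coincide.

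Part (d) is where the $(EQ)$ hypothesis plays its essential role: for $x\in D_\infty(A)$ the regularizing integrals $\int_0^\infty\varphi(t)T_\infty(t)x\,dt$, with $\varphi\in\mathcal{D}_{(0,\infty)}$, place any element of $F$ within the $E$-closure of $D_\infty(A)$, and combining this with the preliminary equality $\overline{D(A^k)}=F$ for $k\ge n$ identifies $n(A)$ as the smallest $k$ at which $\overline{D(A^k)}$ has shrunk to the subspace generated by $D_\infty(A)$; the opposite inequality uses minimality built into the definition of $n(A)$. The main technical obstacle I anticipate is the seminorm estimate in (b): transferring Kunstmann's Banach-space reconstruction of the resolvent \cite{ku101} to the SCLCS setting forces one to replace operator-norm bounds with the equicontinuous families supplied by $(EQ)$ and to propagate the factor $(1+|\lambda|)^n$ through the iterated resolvent expansion while preserving, for each $p\in\circledast$, the correct dependence on a single companion seminorm.
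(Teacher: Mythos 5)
Your parts a) and c) are correct and coincide with the paper's argument (a) via $D(A^{n+1})\subseteq D(A_F)$ together with $F=\overline{D(A^n)}\subseteq\overline{D(A^{n+1})}$; c) via the sandwich $D(A^{n+k})\subseteq D(A_F^{k})\subseteq D(A^{k})$), and your proof of the inclusion $\rho(A)\subseteq\rho(A_F)$ in b) is a more detailed version of the paper's one-line appeal to the invariance of $F$ under $R(\lambda:A)$. The genuine gap is the converse inclusion in b). Your plan is to iterate $R(\lambda:A)z=\lambda^{-1}z+\lambda^{-1}R(\lambda:A)Az$, but this identity presupposes that $R(\lambda:A)$ exists, i.e.\ that $\lambda\in\rho(A)$ --- exactly what is to be proved. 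Read instead as the definition of a candidate inverse, the expansion $\sum_{j=0}^{n}\lambda^{-(j+1)}A^{j}x+\lambda^{-(n+1)}R(\lambda:A_F)A^{n+1}x$ only makes sense when $A^{n+1}x\in F$ (e.g.\ $x\in D(A^{2n+1})$), and such vectors are dense only in $F$, never in $E$: the whole point of stationary density is that $D(A)$ need not be dense in $E$, so no extension by density can produce an operator in $L(E)$. The paper closes this hole with a two-sided regularization that your proposal lacks: fixing an auxiliary $\mu\in\rho(A)$ (nonemptiness of the resolvent set is part of the hypotheses in force), it uses $R(\lambda:A)=(\mu-A)^{n}R(\lambda:A_F)R(\mu:A)^{n}$, where $R(\mu:A)^{n}$ first maps an \emph{arbitrary} $x\in E$ into $D(A^{n})\subseteq F$, then $R(\lambda:A_F)$ acts, and $(\mu-A)^{n}$ undoes the regularization. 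The polynomial factor $(1+|\lambda|)^{n}$ is then extracted not from a $\lambda^{-1}$-expansion (which in any case degenerates near $\lambda=0$) but from the identity $A^{i}R(\lambda:A_F)x=\lambda^{i}R(\lambda:A_F)x-\sum_{j=0}^{i-1}\lambda^{j}A^{i-1-j}x$ applied on $D(A^{n})$, combined with the factorization; your claimed estimate ``for all $x\in E$'' cannot be recovered from an expansion valid only on smooth vectors.

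A smaller defect concerns d). Your regularizing integrals $\int_{0}^{\infty}\varphi(t)T_{\infty}(t)x\,dt$ require $x\in D_{\infty}(A)$ to begin with, so they produce elements of $D_{\infty}(A)$ from elements of $D_{\infty}(A)$ and cannot by themselves place the points of $F$ in $\overline{D_{\infty}(A)}$, which is the nontrivial inequality $\inf\{k:\;D(A^{k})\subseteq\overline{D_{\infty}(A)}\}\leq n(A)$. The paper instead deduces d) from a), c) and $D(A^{n+1})\subseteq D(A_F)$: the operator $A_F$ is densely defined in $F$ and inherits the regularity coming from $(EQ)$ through b), whence $D_{\infty}(A)=D_{\infty}(A_F)$ is dense in $F$ and $D(A^{n})\subseteq F\subseteq\overline{D_{\infty}(A)}$; the opposite inequality, which you handle correctly, follows from $D_{\infty}(A)\subseteq D(A^{m+1})$ and the minimality in the definition of $n(A)$. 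So: a), c) and half of b) are sound and essentially the paper's route, but the converse inclusion in b) needs the auxiliary point $\mu$ and the regularization $R(\mu:A)^{n}$, and d) needs the density of $D_{\infty}(A_F)$ in $F$ rather than an integral regularization starting inside $D_{\infty}(A)$.
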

\begin{proof}\begin{itemize}
\item[a)] It is obvious since $D(A^n)$ is dense in $F$ and $D(A^n)\subset D(A_F)$.
\item[b)]
Since $D(A^n)\subset D(A)$, $F$ is invariant under resolvent of $A$, we obtain $\rho(A;L(E))\subset\rho(A_F;L(F))$. Now, let $\lambda\in\rho(A_F;L(F))$. Then $\lambda-A$ is injective, so $R(\lambda:A)$ is an extension of $R(\lambda:A_F)$. Let $\mu\in\rho(A)$. By $R(\lambda:A)=(\mu-A)^nR(\lambda:A)R(\mu:A)^n$, we obtain that $\lambda\in\rho(A:L(E))$.\\

It holds, for $x\in E$ and all $p\in\circledast$, $$p(R(\lambda:A)x)\leq p_n(R(\mu:A)^nx)p_n(R(\lambda:A_F)x)p(R(\mu:A)^nx)$$
where $p_n(x)=\sup\limits_{p\in\circledast}\sum\limits_{i=1}^{n}p(A^ix)$. Note that $(D(A^n),p_n)$ is a Banach space. For $x\in D(A^n)$ we have
$$p_n(R(\lambda:A_F)x)=\sup\limits_{p\in\circledast}\sum\limits_{i=0}^{n}p(A^iR(\lambda:A_F)x)=$$
$$=\sup\limits_{p\in\circledast}\sum\limits_{i=0}^{n}p(({\lambda}^k+A^k-{\lambda}^k)R(\lambda:A_F)x)\leq$$
$$\leq\sup\limits_{p\in\circledast}\sum\limits_{i=0}^{n}{\Biggl(}{|\lambda|}^k
p(R({\lambda}:A_F)x)+
\sum\limits_{j=0}^{i-1}{|\lambda|}^j p(A^{i-1-j}x){\Biggr)}.$$
The last one inequality, together with the previous one gives the statement of the lemma.

\item[c)]It holds $D(A^{n+k})\hookrightarrow D((A_F)^k)\hookrightarrow D(A^k)$, which one holds for all $k\in{\mathbb N}_0$. Then $$\bigcap\limits_{k=1}^{\infty}D(A^{n+k})\hookrightarrow\bigcap\limits_{k=1}^{\infty}D(A_F^k)
\hookrightarrow\bigcap\limits_{k=1}^{\infty}D(A^k),$$ which gives that $D_{\infty}(A)$ and $D_{\infty}(A_F)$ coincide topologically and $A_{\infty}=(A_F)_{\infty}$.
\item[d)] From the previous lemma, $D_{\infty}(A)=D_{\infty}(A_F)$ and $D(A^{n+1})\subset D(A_F)$, we obtain the conclusion of the lemma.\end{itemize}\end{proof}

\begin{thm}\label{eqeq} Let $A$ be a stationary dense operator in $E$ with non-empty resolvent. Then $\rho(A;L(E))=\rho(A_{\infty};L(D_{\infty}(A)))$.\end{thm}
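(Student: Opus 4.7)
The plan is to prove equality of resolvent sets by double inclusion, handling the easy direction directly and reducing the harder one to the densely defined case via Lemma \ref{lema1234}.

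For the forward inclusion $\rho(A;L(E))\subseteq\rho(A_\infty;L(D_\infty(A)))$, I would fix $\lambda\in\rho(A)$ and set $R:=(\lambda-A)^{-1}\in L(E)$. Because $R$ commutes with $A$ on $D(A)$, it carries each $D(A^k)$ into $D(A^{k+1})\subseteq D(A^k)$, hence leaves $D_\infty(A)$ invariant and provides a two-sided inverse of $\lambda I-A_\infty$ there. Continuity in the Fr\'echet topology of $[D_\infty(A)]$ is immediate from
\begin{equation*}
p_k(Rx)=\sum_{i=0}^{k}p(A^iRx)=\sum_{i=0}^{k}p(RA^ix)\leq c_p\sum_{i=0}^{k}q(A^ix)=c_p\,q_k(x),
\end{equation*}
where $q\in\circledast$ and $c_p>0$ are chosen so that $p(R\cdot)\leq c_p\,q(\cdot)$ on $E$. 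This gives $\lambda\in\rho(A_\infty;L(D_\infty(A)))$.

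For the reverse inclusion I would fix $\lambda\in\rho(A_\infty)$ and any $\mu\in\rho(A)$ (non-empty by hypothesis); the first step then gives $\mu\in\rho(A_\infty)$ as well. Setting $n:=n(A)$ and $F:=\overline{D(A^n)}$, I would invoke Lemma \ref{lema1234}: part (b) yields $\rho(A;L(E))=\rho(A_F;L(F))$ together with the quantitative control of $R(\mu:A)$ by $R(\mu:A_F)$, and part (c) identifies $A_\infty=(A_F)_\infty$ on the topologically equal Fr\'echet spaces $D_\infty(A)=D_\infty(A_F)$. This reduces the problem to showing $\lambda\in\rho(A_F;L(F))$ for the densely defined operator $A_F$ furnished by part (a). For this I would use the iterated resolvent identity
\begin{equation*}
R(\lambda:A_F)y=\sum_{j=1}^{n+1}(\mu-\lambda)^{j-1}R(\mu:A_F)^{j}y+(\mu-\lambda)^{n+1}R(\mu:A_F)^{n+1}R(\lambda:A_F)y
\end{equation*}
on the dense subspace $D_\infty(A_F)\subseteq F$, where the tail is controlled by $R(\lambda:(A_F)_\infty)\in L([D_\infty(A_F)])$, and then close up to a bounded operator on $F$ by continuity, using the density of $D_\infty(A_F)$ in $F$ (coming from dense definedness of $A_F$ together with Lemma \ref{lema1234}(d)) and the polynomial estimate from Lemma \ref{lema1234}(b). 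A direct computation then shows the resulting operator is a two-sided inverse of $\lambda I-A_F$ on $F$.

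The main obstacle is the reverse direction: $R(\lambda:A_\infty)$ is a priori continuous only in the strictly finer Fr\'echet topology of $[D_\infty(A)]$, which is genuinely stronger than the subspace topology inherited from $E$, so mere density does not extend it to a member of $L(E)$. The stationary dense hypothesis is used precisely here, through Lemma \ref{lema1234}, to pass to a densely defined operator $A_F$ on $F$ and to import the quantitative resolvent control needed for the extension. Once that extension is in hand, verifying the defining identities of the resolvent is routine.
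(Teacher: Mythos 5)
Your strategy coincides with the paper's: the paper proves this theorem exactly by combining Lemma \ref{lema1234} (a)--(c) with the proof of \cite[Theorem 2.3]{ku101} applied to the densely defined part $A_F$ on $F=\overline{D(A^{n})}$, and your forward inclusion together with the extension-by-density scheme for the reverse one is precisely the content of that cited argument, which you reconstruct inline rather than cite.

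One step as literally written would fail, namely the choice of expansion length $n+1$. The stationarity index $n=n(A)$ has nothing to do with how many iterations of the resolvent identity are needed; what matters is the order of continuity of $R(\lambda:(A_F)_\infty)$ as a map from the Fr\'echet space $[D_\infty(A_F)]$ into $F$: for each seminorm $p$ of $F$ there exist $k=k(p)\in\mathbb{N}$, a seminorm $q$ and $C>0$ with
$$
p\bigl(R(\lambda:(A_F)_\infty)u\bigr)\leq C\,q_{k}(u),\quad u\in D_\infty(A_F),\qquad q_{k}(u)=\sum_{i=0}^{k}q\bigl(A^{i}u\bigr),
$$
and $k(p)$ is in general unrelated to $n$ and may grow as $p$ varies. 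With $m=n+1$ fixed, the tail $(\mu-\lambda)^{n+1}R(\mu:A_F)^{n+1}R(\lambda:A_F)y$ is not controlled by $F$-seminorms of $y$: the outer factor $R(\mu:A_F)^{n+1}$ cannot compensate for the $q_{k}$-seminorms of $y$ demanded by the inner resolvent. The repair is immediate: commute (the two resolvents commute on $D_\infty(A_F)$) so that the tail reads $(\mu-\lambda)^{m}R(\lambda:(A_F)_\infty)R(\mu:A_F)^{m}y$; observe that for $m\geq k$ one has $q_{k}(R(\mu:A_F)^{m}y)\leq C'\tilde{q}(y)$ via $A^{i}R(\mu:A_F)^{m}=R(\mu:A_F)^{m-i}(\mu R(\mu:A_F)-I)^{i}$; and, since the iterated identity holds for every $m$ while the operator being estimated does not depend on $m$, choose $m\geq k(p)$ separately for each target seminorm $p$. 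With this change your argument closes: density of $D_\infty(A_F)$ in $F$ follows as you say (indeed Lemma \ref{lema1234}(d) gives $F\subseteq\overline{D_\infty(A)}$), $F$ is sequentially complete as a closed subspace of $E$, closedness of $A_F$ yields $(\lambda-A_F)S=I$ on all of $F$ by passing to limits, and injectivity of $\lambda-A_F$ --- which your concluding ``direct computation'' must also address --- is automatic, since any $v$ with $A_Fv=\lambda v$ bootstraps into $D_\infty(A_F)$ and is annihilated by the injectivity of $\lambda-(A_F)_\infty$.
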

\begin{proof}
By Lemma \ref{lema1234} a) we have that $A_F$ is dense in $F$ and by b) from the same lemma it has non-empty resolvent. Then using the proof of ~\cite[Theorem 2.3]{ku101} and Lemma \ref{lema1234} c), we obtain that $\rho(A;L(E))=\rho(A_{\infty};L(D_{\infty}(A)))$.
\end{proof}
Next example show us if $A$ is not stationary dense operator in $E$, then the conclusion of Theorem \ref{eqeq} does not hold. \begin{example}
Let we define the space $S_j$ as $$S_j=\{\varphi\in{\mathcal C}^{\infty}({\mathbb R})\, :\, p_j(x)=\sup\limits_{\alpha+\beta\leq j}\|x^{\beta}D^{\alpha}\varphi(x)\|_{L^2({\mathbb R})}<\infty\}.$$ Then the test space for tempered distributions $S({\mathbb R})$ can be defined as ${\mathcal S}({\mathbb R})=\lim\limits_{j\rightarrow\infty}\mbox{proj} S_j$. Let $E={\mathcal S}({\mathbb R})$, ($E$ is a Fr\'echet space as a projective limit of Banach spaces, so $E$ is a sequentially complete locally convex space). Define $A=-\frac{d}{dt}$ on $E$ with domain $D(A)=\{f\in E\, :\, f(0)=0\}$. The operator $A$ is not stationary dense on $E$. Note that $D_{\infty}(A)=\{0\}$ and $\rho(A_{\infty})=\{\lambda\in{\mathbb C}\, :\, \lambda\neq 0\}$. For $f\in E$, $\lambda\in{\mathbb C}$, and $\Re\lambda>s$, we have $$(\lambda-A)^{-1}f=\int\limits_0^{\infty}e^{-(\lambda-s)t}f(t)\, dt$$ belongs in $E$. Then $\rho(A)=\{\lambda\in{\mathbb C}\, :\, \Re\lambda>s\}$. Therefore, we obtain that the conclusion of Theorem \ref{eqeq} does not hold. The same conclusion can be made in the ultradistribution case.
Consider the spaces $$E_{(h)}=\{f\in C^{\infty}({\mathbb R})\, :\, f(0)=0,\, p_n(f)=\sup\limits_{k\in{\mathbb N}_0}\sup\limits_{t\geq k}\frac{h^n|{f}^{(n)}(t)|}{M_n}<+\infty\}$$
$$E_{\{h\}}=\{f\in C^{\infty}({\mathbb R})\, :\, f(0)=0,\, p_n(f)=\sup\limits_{k\in{\mathbb N}_0}\sup\limits_{t\geq k}\frac{h^n|{f}^{(n)}(t)|}{M_n\prod\limits_{i=1}^nr_i}<+\infty\}$$ for $(r_i)$ monotonically increasing positive sequence and $(M_n)$ satisfying $(M.1)$ and $(M.3)'$. The spaces $E_{(h)}$ and $E_{\{h\}}$ are Fr\'echet spaces. Let $A=-\frac{d}{ds}$ with domain $D(A)=\{f\in{E_h}\, : \, f(0)=0, Af\in E_h\}$, where $E_h$ stands for both spaces. Let $p_n\in\circledast$ be a seminorm in $E_{\{h\}}$.
The previous consideration in distribution case for $E={\mathcal S}({\mathbb R})$ is similar and more simple then the  case with spaces $E_{(h)}$ and $E_{\{h\}}$.
\end{example}
\begin{thm}\label{kor1} Let $A$ be a stationary dense operator in a sequentially complete locally convex space $E$, $n=n(A)$ and $F=\overline{D(A^n)}$. Then $A$ generates a distribution semigroup in $E$ if and only if $A_F$ generates a distribution semigroup in $F$.\end{thm}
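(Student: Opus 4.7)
The plan is to adapt the Banach space argument of Kunstmann (\cite{ku101}) to the SCLCS setting, using the tools established in Sections 2 and 3 of the paper. Both directions split cleanly, with the forward implication being routine and the backward one requiring a passage through resolvent estimates.

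For the necessity, suppose $A$ generates a (DS) $\mathcal{G}$ on $E$. The first step is to verify that each $\mathcal{G}(\varphi)$ leaves $F$ invariant. Since $\mathcal{G}(\varphi)$ commutes with $A$ (from $\mathcal{G}(\varphi)A\subseteq A\mathcal{G}(\varphi)$), iteration on $D(A)\subseteq D(A^{n-1})\subseteq\cdots\subseteq D(A^n)$ shows $\mathcal{G}(\varphi)$ maps $D(A^n)$ into itself, and by continuity of $\mathcal{G}(\varphi)$ together with closedness of $F=\overline{D(A^n)}$, it sends $F$ into $F$. Set $\mathcal{G}_F(\varphi):=\mathcal{G}(\varphi)_{|F}$; bounded equicontinuity, commutation with $A_F$, and property (C.S.1) are inherited from $\mathcal{G}$, while (C.S.2) for $\mathcal{G}_F$ follows at once from (C.S.2) for $\mathcal{G}$. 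The defining relation (\ref{dkenk}) with $C=I$ restricts verbatim to $F$ with $A$ replaced by $A_F$, so Theorem \ref{fundamentalna}(ii) identifies $A_F$ as the integral generator of $\mathcal{G}_F$.

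For the sufficiency, suppose $A_F$ generates a (DS) $\mathcal{G}_F$ on $F$. The strategy is to transfer resolvent information from $F$ to $E$ and then invoke Theorem \ref{herbs}. A (DS) generator's resolvent set contains an exponential region $e(a,b)$, and the family $\{(1+|\lambda|)^{-\alpha}R(\lambda:A_F):\lambda\in e(a,b)\}\subseteq L(F)$ is equicontinuous for some $\alpha>0$; in the Banach case this follows from the structural correspondence between (DS)'s and local integrated semigroups (cf. Theorem \ref{lokal-int-C}, Theorem \ref{cyne}) combined with Laplace transforms, and the same argument carries over to SCLCS, possibly under the mild finite-order hypothesis alluded to in Remark \ref{finite-order}. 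By Lemma \ref{lema1234}(b) one has $\rho(A)=\rho(A_F)$ and the pointwise estimate there upgrades the polynomial bound for $R(\lambda:A_F)$ to a polynomial bound (of degree $\alpha+n$, roughly) for $R(\lambda:A)$ on the same exponential region $e(a,b)$. Applying Theorem \ref{herbs} with $C=I$ yields a (DS) $\mathcal{G}$ on $E$ whose integral generator is $A$ (since $C^{-1}AC=A$).

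Uniqueness of the generator via Proposition \ref{kisinski-uniqueness} makes the correspondence $\mathcal{G}\leftrightarrow\mathcal{G}_F$ functorial and closes the equivalence. The main obstacle is the backward direction, specifically the step extracting a polynomially bounded resolvent on an exponential region from the mere assumption that $A_F$ generates a (DS) on $F$; in a general SCLCS this is where the Paley--Wiener type representation (Theorem \ref{p-w-dis}) and the structural results of Section 3 must be combined carefully, and it is here that one genuinely needs the hypothesis that $F$ and $A_F$ are well behaved enough for Theorem \ref{lokal-int-C} or its finite-order variant to apply. Once that polynomial bound on $e(a,b)$ is secured on $F$, the rest of the argument on $E$ is essentially formal thanks to Lemma \ref{lema1234}(b).
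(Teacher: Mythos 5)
Your overall architecture is close to the paper's, whose own proof is a one-line assembly of three ingredients: the resolvent characterization of (DS) generators in SCLCS imported from \cite[Theorem 2.7]{tica}, Lemma \ref{lema0}, and the resolvent transfer of Lemma \ref{lema1234}(b). Your necessity direction is genuinely different and it works: instead of going through resolvents, you restrict $\mathcal{G}$ to $F$ directly, and the verifications you sketch are complete --- invariance of $F$ under $\mathcal{G}(\varphi)$ via commutation with $A$ plus continuity, inheritance of bounded equicontinuity, (C.S.1) and (C.S.2), and the fact that (\ref{dkenk}) restricts to $F$ with $A_F$ in place of $A$ (for $x\in F$ one has $A\mathcal{G}(\varphi)x=\mathcal{G}(-\varphi')x-\varphi(0)x\in F$, so $\mathcal{G}(\varphi)x\in D(A_F)$), after which Theorem \ref{fundamentalna}(ii) with $C=I_F$ applies. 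Note this half uses neither stationary density nor Lemma \ref{lema0}, and it delivers more than the paper's route does, namely that the restricted semigroup is generated precisely by $A_F$. In the sufficiency direction your transfer step is essentially right even though Lemma \ref{lema1234}(b) is stated only pointwise in $x$: the iterated resolvent identity $R(\lambda:A)=\sum_{k=1}^{n}(\mu-\lambda)^{k-1}R(\mu:A)^{k}+(\mu-\lambda)^{n}R(\lambda:A_F)R(\mu:A)^{n}$, legitimate because $R(\mu:A)^{n}E\subseteq D(A^{n})\subseteq F$, upgrades equicontinuity of $\{(1+|\lambda|)^{-\alpha}R(\lambda:A_F):\lambda\in e(a,b)\}$ on $F$ to equicontinuity of $\{(1+|\lambda|)^{-\alpha-n}R(\lambda:A):\lambda\in e(a,b)\}$ on $E$, together with the strong continuity in $\lambda$ that Theorem \ref{herbs} requires; so your ``degree $\alpha+n$, roughly'' is accurate.

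The genuine gap is exactly the step you flag yourself: the claim that $A_F$ generating a (DS) on $F$ forces $\rho(A_F)$ to contain an exponential region $e(a,b)$ with an equicontinuous polynomially scaled resolvent family. Even in Banach spaces this is a nontrivial theorem (Kunstmann \cite{ku112}); in the present SCLCS generality the paper does not re-derive it but imports it wholesale as \cite[Theorem 2.7]{tica}. Your proposed substitute --- Theorem \ref{lokal-int-C} plus Theorem \ref{cyne} plus Laplace transform arguments --- does not run under the bare hypotheses of the statement: Theorem \ref{lokal-int-C} needs $L(F,[D(A_F)])$ to be a quasi-complete (DF)-space, or the finite-order assumption of Remark \ref{finite-order}, neither of which is granted here; and even with local integrated semigroups of orders $n_\tau$ on $[0,\tau)$ in hand, extracting a \emph{single} exponential region with a \emph{single} polynomial degree $\alpha$ requires a quantitative argument of Arendt--El-Mennaoui--Keyantuo/Kunstmann type that you do not supply. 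As written, your sufficiency proof therefore establishes the implication only under extra structural hypotheses on $F$; replacing that block by a citation of \cite[Theorem 2.7]{tica}, as the paper does, closes the gap, and the rest of your argument then goes through. The concluding appeal to Proposition \ref{kisinski-uniqueness} is harmless but not needed for the stated equivalence.
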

\begin{proof}The proof of this theorem is direct consequence of \cite[Theorem 2.7]{tica}, Lemma \ref{lema0} and Lemma \ref{lema1234} b). \end{proof}
Following two results are due to T. Ushijima and we can restate it in locally convex case as following.
\begin{thm}\label{pom1} Let $E$ be a sequentially complete locally convex space and $A$ be a closed operator with dense $D(A^{\infty})$. Then $A$ has the property $(EQ)$ if and only if there exists logarithmic region $\Omega_{\alpha,\beta}$ and $k\in{\mathbb N}$ and $C>0$ such that $$ ||R(\lambda : A)||\leq C(1+|\lambda|)^k, \quad \lambda\in \Omega_{\alpha,\beta}.$$
\end{thm}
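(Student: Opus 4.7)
The plan is to prove the equivalence in the locally convex setting by adapting Ushijima's scalar-normed argument, using the seminorm calibration $(p_n)_{n \in \mathbb{N}, p \in \circledast}$ of $[D_\infty(A)]$ throughout. The heart of the matter is the two-way passage between the semigroup $T_\infty(t)$ acting on the Fr\'echet-type space $[D_\infty(A)]$ and the resolvent of $A$ viewed in the larger space $E$, and the gluing agent is the classical complex-inversion formula along a contour $\Gamma$ lying in $\Omega_{\alpha,\beta}$.

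For the sufficiency direction, assume the polynomial resolvent bound on $\Omega_{\alpha,\beta}$. Fix a curve $\Gamma$ parameterizing the boundary of a slightly smaller logarithmic region, oriented so that $\Re \lambda \to -\infty$ logarithmically as $|\Im \lambda| \to \infty$. For $x \in D_\infty(A)$ and $t > 0$, define
\begin{equation*}
T_\infty(t)x := \frac{1}{2\pi i} \int_{\Gamma} e^{\lambda t} R(\lambda : A)x \, d\lambda.
\end{equation*}
To make the integral converge in $E$ (and in fact in each seminorm of $[D_\infty(A)]$), I would use the identity
\begin{equation*}
R(\lambda : A)x = \sum_{j=0}^{m-1} \lambda^{-j-1} A^{j} x + \lambda^{-m} R(\lambda : A) A^{m} x, \qquad x \in D(A^{m}),
\end{equation*}
choosing $m$ larger than the polynomial exponent $k$ plus a safety constant so that, on the logarithmic contour, $|\lambda|^{-m}(1+|\lambda|)^{k}|e^{\lambda t}|$ is integrable with room to spare. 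The polynomial prefactors in $\lambda^{-j-1}$ cancel after contour shift (they yield holomorphic expressions). Equicontinuity $p(T_\infty(t)x) \leq C q(x)$ then falls out from the seminorm inequality supplied by hypothesis, uniformly in $t$ in compact subsets, and globally after a suitable exponential weight disappears because $\Re \lambda \to -\infty$ along $\Gamma$. The semigroup property $T_\infty(t+s) = T_\infty(t)T_\infty(s)$ is verified by Fubini plus the resolvent equation and a contour-deformation argument; strong continuity at $t = 0^{+}$ on $D_\infty(A)$ and the identification of the generator with $A_\infty$ follow from standard computations using $A_\infty R(\lambda:A)x = \lambda R(\lambda:A)x - x$.

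For the necessity direction, assume $(EQ)$. Since $A_\infty$ generates an equicontinuous $C_0$-semigroup on the Fr\'echet space $[D_\infty(A)]$, the locally convex Hille--Yosida theorem (Kōmura's theorem, cf. the discussion around \cite[Theorem 3]{komura} adapted to the Fr\'echet case) gives that $\{ \lambda \in \mathbb{C} : \Re \lambda > 0 \} \subseteq \rho(A_\infty)$ and
\begin{equation*}
p_{n}\bigl( R(\lambda:A_\infty)^{k} x \bigr) \leq \frac{C_{n}}{(\Re \lambda)^{k}} \, q_{n}(x), \qquad \Re\lambda > 0, \ k \geq 1, \ x \in D_\infty(A),
\end{equation*}
for every seminorm $p_n$ on $D_\infty(A)$. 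Because $D(A^\infty)$ is dense in $E$, for $\lambda$ in the right half-plane these estimates transfer to a polynomial bound for $R(\lambda:A)$ on $E$: one approximates a test element of $E$ by elements of $D(A^{m})$ (with $m$ as large as needed) and uses the identity displayed above in reverse together with the closedness of $A$. Finally, Taylor-expanding $\lambda \mapsto R(\lambda:A)$ around a real point $\lambda_{0} > 0$ with the uniform bounds above produces an analytic continuation into a disc of radius proportional to $\lambda_{0}$; sweeping $\lambda_{0}$ along $(0,\infty)$ and collecting the discs yields precisely a logarithmic region $\Omega_{\alpha,\beta}$ on which the polynomial bound $\|R(\lambda:A)\| \leq C(1+|\lambda|)^{k}$ (interpreted seminormwise) persists, where the exponent $k$ absorbs the factorial combinatorics from Taylor coefficients.

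The step I expect to be the main obstacle is the transfer of the resolvent estimate from $A_\infty$ on $[D_\infty(A)]$ to $A$ on $E$ in the necessity direction, because the density of $D(A^\infty)$ in $E$ is only topological and does not by itself provide quantitative seminorm comparisons between $p_n$ on $D_\infty(A)$ and $p$ on $E$; one has to be careful to realize the dominating seminorm $q$ on $E$ constructed from $q_n$ on $D_\infty(A)$ without losing the polynomial-in-$\lambda$ control. A secondary delicate point is the Fubini/contour-deformation justification of the semigroup law, where one needs absolute convergence of an iterated integral on the logarithmic contour; this is precisely where the choice of $m$ above, made generous enough to beat $(1+|\lambda|)^{k}$ twice over, becomes essential.
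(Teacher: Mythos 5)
Before comparing approaches, note that the paper contains no proof of Theorem \ref{pom1} at all: it is introduced with the sentence that the following two results ``are due to T. Ushijima'' and are merely restated in the locally convex setting (cf. \cite{ush}), so your attempt can only be judged on its own merits against Ushijima's Banach-space argument, whose outline (complex inversion along the boundary of the logarithmic region for sufficiency; Laplace-transform and continuation estimates for necessity) you do follow. The first genuine defect is in your sufficiency direction, and it is geometric: on the boundary of a logarithmic region one has $\Re\lambda=\alpha\log(1+|\Im\lambda|)+\beta\to+\infty$, not $-\infty$, as $|\Im\lambda|\to\infty$. Hence $|e^{\lambda t}|=e^{t\Re\lambda}$ grows along $\Gamma$ like $|\Im\lambda|^{\alpha t}$, and the integrand $|\lambda|^{-m}(1+|\lambda|)^{k}|e^{\lambda t}|$ is integrable only when roughly $m>k+\alpha t+1$. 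A single choice of $m$ therefore defines $T_{\infty}(t)x$ only for $t$ in a bounded interval; for $x\in D_{\infty}(A)$ one must let $m$ grow with $t$, check consistency of the resulting definitions, and the estimate one then obtains has the form $p(T_{\infty}(t)x)\leq C_{t}\,q_{m(t)}(x)$ with seminorm index $m(t)$ increasing linearly in $t$. The uniform-in-$t$ equicontinuity demanded by $(EQ)$ does not ``fall out''; extracting a fixed seminorm pair is precisely the nontrivial content of the theorem (it is where the difference between a distribution semigroup and a $C_{0}$-semigroup lives), and your stated mechanism --- ``the exponential weight disappears because $\Re\lambda\to-\infty$ along $\Gamma$'' --- is simply false for logarithmic contours; you have confused them with the left-opening contours used for analytic semigroups.

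In the necessity direction you correctly identify the transfer of resolvent estimates from $A_{\infty}$ on $[D_{\infty}(A)]$ to $A$ on $E$ as the main obstacle, but you then leave it unresolved: density of $D_{\infty}(A)$ in $E$ together with bounds of the form $p_{n}(R(\lambda:A_{\infty})^{k}x)\leq C(\Re\lambda)^{-k}q_{n}(x)$ does not even yield surjectivity of $\lambda-A$ on $E$, let alone an estimate by $E$-seminorms, because the dominating seminorms $q_{n}$ involve powers $A^{j}x$. One has to construct a candidate resolvent on $E$ (for instance as a regularized Laplace transform of $T_{\infty}$, as in the proof of the theorem following Theorem \ref{pom1} in the paper, where $R_{\infty}(\lambda)=G(f_{\lambda})$ with $f_{\lambda}(t)=\mu(t)e^{-\lambda t}$), trading powers of $A$ against powers of $\lambda$ by integration by parts and then closing up via the closedness of $A$; it is exactly this trade that produces the polynomial factor $(1+|\lambda|)^{k}$. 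Finally, your concluding continuation step is quantitatively wrong: Taylor discs of radius proportional to $\lambda_{0}$ centered at $\lambda_{0}\in(0,\infty)$ sweep out a sector, which would prove a sectorial resolvent bound --- strictly stronger than the theorem and not derivable from the hypotheses. The logarithmic shape of $\Omega_{\alpha,\beta}$ arises because the usable radius of continuation at $\lambda_{0}$ is controlled by the reciprocal of the polynomial bound (so it shrinks like $(1+|\lambda_{0}|)^{-k}$ up to the half-plane geometry), and this bookkeeping, which is the actual source of the region in Ushijima's proof, is absent from your sketch.
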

\begin{thm} Let $E$ be a sequentially complete locally convex space and $A$ linear operator on $E$. The following conditions are equivalent:
\begin{itemize}
\item[i)] $A$ is the generator of a distribution semigroup $G$;
\item[ii)] $A$ is well-posed and densely defined;
\item[iii)] $A$ satisfy the condition $(EQ)$;
\item[iv)] $A$ is densely defined, and there exist an adjoint logarithmic region $\Omega_{\alpha,\beta}$ and $k\in{\mathbb N}$ and $C>0$ such that
$$ ||R(\lambda : A)||\leq C(1+|\lambda|)^k, \quad \lambda\in \Omega_{\alpha,\beta}.$$
\end{itemize}
\end{thm}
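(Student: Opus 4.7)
The plan is to establish the circular chain (i) $\Rightarrow$ (ii) $\Rightarrow$ (iii) $\Rightarrow$ (iv) $\Rightarrow$ (i), stitching together results already available in the preceding sections and the Banach-space prototypes from \cite{ku101,ush,fujiwara}.

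For (i) $\Rightarrow$ (ii), if $A$ generates a (DS) $\mathcal{G}$, then the operator-valued distribution $\mathcal{G}$ satisfies (i)--(iii) of Theorem \ref{59}, and Theorem \ref{opositethm} (specialized to $C=I$) gives the converse implication built into the definition of well-posedness. Density of $D(A)$ then comes from Proposition \ref{isto}(iv) together with (C.S.3). The reverse implication (ii) $\Rightarrow$ (i) uses the same pair of theorems in the other direction, combined with Theorem \ref{fundamentalna}(ii) which identifies the generator of the so-produced (DS) as $C^{-1}AC = A$.

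For (ii) $\Rightarrow$ (iii) — which I expect to be the main obstacle — one starts from the boundedly equicontinuous $\mathcal{G}\in\DD'_0(L(E))$ furnished by Theorem \ref{59}. Applying Theorem \ref{lokal-int-C} (or, equivalently, Theorem \ref{cyne} in reverse via Proposition \ref{pc-kunst-isto}) one obtains, on each interval $[0,\tau)$, that $A$ is the integral generator of a locally equicontinuous $n_\tau$-times integrated semigroup $S_{n_\tau}(t)$. For $x\in D_\infty(A)$ the classical formula
\[
T_\infty(t)x:=\frac{d^{n_\tau}}{dt^{n_\tau}}S_{n_\tau}(t)x=S_{n_\tau}(t)A^{n_\tau}x+\sum_{j=0}^{n_\tau-1}\frac{t^j}{j!}A^j x
\]
is independent of $n_\tau$ and defines a $C_0$-semigroup on $D_\infty(A)$ whose generator is $A_\infty$ (cf.\ Lemma \ref{lema1234}(c)). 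The seminorms $p_n$ on $D_\infty(A)$ absorb the polynomial factors coming from the local equicontinuity estimates on $S_{n_\tau}(t)$, so $(T_\infty(t))_{t\geq 0}$ is in fact equicontinuous, giving $(EQ)$. The delicate point will be bookkeeping these seminorm estimates uniformly in $t$ across the growing choices of $n_\tau$ and guaranteeing that the $p_n$-bounds assemble into a single equicontinuity statement on $D_\infty(A)$.

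For (iii) $\Rightarrow$ (iv), the denseness of $D(A^\infty)$ in $E$ follows (under the hypotheses implicit in the preceding implication, or directly from $(EQ)$ together with closedness) from Lemma \ref{lema1234}(a),(d), so Theorem \ref{pom1} is applicable and yields the required logarithmic-region resolvent bound. Finally, for (iv) $\Rightarrow$ (i), one invokes Theorem \ref{herbs} with $C=I$ and $\alpha=k$, after observing that the adjoint logarithmic region $\Omega_{\alpha,\beta}$ contains an exponential region $e(a,b)$ for suitably chosen $a,b>0$ (the two descriptions parametrize the same family of regions bounded by exponential/logarithmic curves). The polynomial bound on $R(\lambda:A)$ translates verbatim into the equicontinuity hypothesis on $\{(1+|\lambda|)^{-k}(\lambda-A)^{-1}:\lambda\in e(a,b)\}$, and Theorem \ref{herbs} then produces a (C-DS) with $C=I$, i.e.\ a distribution semigroup whose integral generator is $C^{-1}AC=A$, closing the cycle.
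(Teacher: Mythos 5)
You have the right skeleton (the circular chain, with Theorem \ref{pom1} for (iii)$\Rightarrow$(iv)), but your implementation of (ii)$\Rightarrow$(iii) — which you correctly identify as the main obstacle — would fail, and it is precisely here that the paper does something different. The paper does not pass through local integrated semigroups at all: it proves the corresponding implication (i)$\Rightarrow$(iii) on the Laplace-transform side, setting $f_{\lambda}(t)=\mu(t)e^{-\lambda t}$ (with $\mu\in{\mathcal D}$, $\mu\equiv 1$ near $[0,1]$) and $R_{\infty}(\lambda):=G(f_{\lambda})$; the bounded equicontinuity of $G\in{\mathcal D}'_{0}(L(E))$ then yields a polynomial estimate $p(R_{\infty}(\lambda)x)\leq C(1+|\lambda|)^{k}q(x)$ valid \emph{uniformly on the closed half-plane} $\Re\lambda\geq 0$, a contour-integral computation identifies $R_{\infty}(\lambda)$ with Ushijima's generalized resolvent $({\boldsymbol\lambda}-{\mathbf A}_{\infty})^{-1}$, and the generation theory of \cite[Section 3]{ush1} converts the half-plane bound into the equicontinuous semigroup $T_{\infty}$ with $\int_{0}^{\infty}\varphi(t)T_{\infty}(t)x\,dt=G(\varphi)x$. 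Your route cannot reproduce this. First, Theorem \ref{lokal-int-C} carries the extra hypothesis that $L(E,[D(A)])$ be a quasi-complete (DF)-space (or, per Remark \ref{finite-order}, that ${\mathcal G}$ have finite order), which the present theorem does not grant in a general SCLCS. Second, and decisively: the families $(S_{n_{\tau}}(t))_{t\in[0,\tau)}$ are only \emph{locally} equicontinuous, with order $n_{\tau}$ and constants depending on $\tau$, whereas $(EQ)$ demands a single pair $(q,C)$ with $p(T_{\infty}(t)x)\leq Cq(x)$ for \emph{all} $t\geq 0$. Nothing in your bookkeeping makes the estimates uniform as $\tau\to\infty$: local polynomial bounds of this kind encode exactly a logarithmic-region resolvent estimate, i.e.\ condition (iv), which is strictly weaker than the half-plane bound; absorbing powers of $\lambda$ into the graded seminorms $p_{n}$ trades regularity in $x$ for growth in $\lambda$, but it cannot manufacture uniformity in $t$. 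The half-plane estimate, available because $G$ itself (not merely its local primitives) is boundedly equicontinuous, is the paper's mechanism for global-in-$t$ equicontinuity, and it has no counterpart in your argument.

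On the remaining arrows your proposal is sound but partly diverges from the paper in harmless ways. The paper simply cites \cite[Theorem 2.7]{tica} for i)$\Leftrightarrow$ii) and iv)$\Rightarrow$i), and \cite[Theorem 4.1]{ush} (its (v)$\Rightarrow$(ii)) for iv)$\Rightarrow$ii); your (i)$\Rightarrow$(ii) via Theorems \ref{59} and \ref{opositethm} together with Proposition \ref{isto}(iv) unpacks the same content, granting — as the paper itself implicitly does when it asserts that ${\mathcal R}(G)$ is dense — that (i) is read as a \emph{dense} distribution semigroup. Your (iv)$\Rightarrow$(i) via Theorem \ref{herbs} with $C=I$ is a legitimate, more self-contained alternative to the citation, and your geometric observation is correct: a logarithmic region $\Omega_{\alpha,\beta}$ contains an exponential region $e(a,b)$ once $a<1/\alpha$ and $b$ is large (for $\lambda\in e(a,b)$ one has $|\lambda|\leq 2e^{a\Re\lambda}$, whence $\alpha\log(1+|\lambda|)+\beta<\Re\lambda$). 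For (iii)$\Rightarrow$(iv) note that Theorem \ref{pom1} presupposes $D(A^{\infty})$ dense; in the paper this density is extracted at the start of the i)$\Rightarrow$iii) argument from ${\mathcal R}(G)\subseteq D(A^{\infty})$, not from $(EQ)$ alone, so your appeal to Lemma \ref{lema1234} needs the stationary-density hypotheses of that lemma to be in force at that point of the chain.
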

\begin{proof}
We will prove $i)\Rightarrow ii)\Rightarrow iii)\Rightarrow iv)\Rightarrow i)$. The statements $i)\Leftrightarrow ii)$ and $iv)\Rightarrow i)$ follow from \cite[Theorem 2.7]{tica} and $iii)\Rightarrow iv)$ follows by Theorem \ref{pom1}  It remains to show that $i)\Rightarrow iii)$ and $iv)\Rightarrow ii)$.\\
$i)\Rightarrow iii)$: By the definition of the distribution semigroup, we can conclude that $D(A^{\infty})$ contains
${\mathcal R}(G)$. Since ${\mathcal R}(G)$ is dense in $E$, $D(A^{\infty})$ is dense in $E$. By the results in the third section from \cite{ush1}, we obtain that $$({\boldsymbol\lambda}-{\mathbf A}_{\infty})^{-1}(1\otimes x)(\hat{\varphi})=G(\varphi)x,$$ for any $\varphi\in\DD$ and $x\in D_{\infty}(A)$. Note that with $({\boldsymbol\lambda}-{\mathbf A}_{\infty})^{-1}$ is denoted the generalized resolvent. Let $f_{\lambda}(t)=\mu(t)e^{-\lambda t}$, where $\mu\in\DD$ and $\mu(t)=1$ for $|t|\leq1$. Define the operator $R_{\infty}(\lambda)=G(f_{\lambda})$. Since $G\in\DD'_0(L(E))$, for all $p\in\circledast$, there exists $q\in\circledast$, $k\in{\mathbb N}$, $C>0$ such that for all $x\in E$, $\lambda\in{\mathbb C}$, $\Re\lambda\geq0$ $$p(R_{\infty}(\lambda)x)\leq C (1+|\lambda|)^kq(x).$$
Now, for any $x\in D_{\infty}(A)$, $\varphi\in\DD$, $a>0$,

$$\frac{1}{i}\int\limits_{a-i\infty}^{a+i\infty}\hat{\varphi}(\lambda)R_{\infty}(\lambda)x\, d\lambda=\int\limits_{a-i\infty}^{a+i\infty}{\Bigl(}\frac{1}{2\pi i}\int\limits_{-\infty}^{\infty}\varphi(t) e^{t\lambda}\, dt{\Bigr)}G(f_{\lambda})\, d\lambda\, x=$$

$$=G{\Bigl(}\mu(s)\frac{1}{2\pi i}\int\limits_{a-i\infty}^{a+i\infty}\, d\lambda\int\limits_{-\infty}^{+\infty}\varphi(t)e^{(t-s)\lambda}\, dt{\Bigr)}x=G(\mu\cdot\varphi)x=$$
$$=G(\varphi)x=({\boldsymbol{\lambda}}-{\textbf A}_{\infty})^{-1}(1\otimes x)(\hat{\varphi}).$$

Again, using third section in \cite{ush1}, we obtain $$\int_0^{\infty}\varphi(t)T_{\infty}(t)x\, dt=G(\varphi)x,$$ for all $\varphi\DD$ and $x\in D_{\infty}(A)$, which gives $iii)$.
$v)\Rightarrow ii)$: It is same like in $(v)\Rightarrow (ii)$ of ~\cite[Theorem 4.1]{ush}.
\end{proof}

\begin{thm} Let $A$ be a closed operator in $E$. Then $A$ generates a distribution semigroup in $E$ if and only if $A$ is stationary dense and $A_{\infty}$ generates an equicontinuous semigroup in $D_{\infty}(A)$.\end{thm}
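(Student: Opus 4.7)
The strategy is to reduce both directions to the densely defined case via the restriction $A_F$ on the invariant subspace $F:=\overline{D(A^{n(A)})}$, and then invoke the preceding equivalence theorem, which characterizes densely defined generators of distribution semigroups by property $(EQ)$.

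For the necessity, assume $A$ generates a distribution semigroup $\mathcal{G}$. The first task is to show that $A$ is stationary dense. Viewing $\mathcal{G}$ as an element of $\mathcal{D}'(L(E,[D(A)]))$ (see Remark \ref{finite-order}), I would apply Theorem \ref{lokal-int-C} together with Remark \ref{finite-order} at some fixed $\tau>0$ to produce an integer $n_\tau\in\mathbb{N}$ and a locally equicontinuous $n_\tau$-times integrated semigroup on $[0,\tau)$ whose integral generator is $A$. Proposition \ref{pc-kunst-isto}(ii) then yields $n(A)\leq n_\tau<\infty$, so $A$ is stationary dense. Set $n:=n(A)$ and $F:=\overline{D(A^n)}$. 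By Theorem \ref{kor1}, $A_F$ generates a distribution semigroup in $F$, and by Lemma \ref{lema1234}(a), $A_F$ is densely defined in $F$. Applying the implication $\mathrm{i)}\Rightarrow\mathrm{iii)}$ of the preceding equivalence theorem to $A_F$ on the SCLCS $F$, I obtain that $(A_F)_\infty$ generates an equicontinuous semigroup on $D_\infty(A_F)$. Finally, Lemma \ref{lema1234}(c) gives the topological identification $D_\infty(A_F)=D_\infty(A)$ and the operator identification $(A_F)_\infty=A_\infty$, and the forward direction follows.

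For sufficiency, assume $A$ is stationary dense with $n:=n(A)$ and that $A_\infty$ generates an equicontinuous semigroup on $D_\infty(A)$. With $F:=\overline{D(A^n)}$, Lemma \ref{lema1234}(a) shows that $A_F$ is densely defined in $F$, while Lemma \ref{lema1234}(c) provides $(A_F)_\infty=A_\infty$ acting on $D_\infty(A_F)=D_\infty(A)$. Thus $A_F$ satisfies $(EQ)$ on $F$. By the implication $\mathrm{iii)}\Rightarrow\mathrm{i)}$ of the preceding equivalence theorem, applied to the densely defined $A_F$, one concludes that $A_F$ generates a distribution semigroup in $F$. Another application of Theorem \ref{kor1} then lifts this to the statement that $A$ generates a distribution semigroup in $E$.

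The principal obstacle is the first step of the forward direction: obtaining stationary density requires an integrated semigroup representation of $\mathcal{G}$, and Theorem \ref{lokal-int-C} has the structural hypothesis that $L(E,[D(A)])$ be a quasi-complete \emph{(DF)}-space, which is not automatic for a general SCLCS; Remark \ref{finite-order} allows one instead to assume $\mathcal{G}$ is of finite order. If neither route is available in a given setting, an alternative is to extract a polynomial bound on the resolvent of $A$ on a logarithmic (or exponential) region from $\mathcal{G}$ in the spirit of Theorem \ref{pom1}, and then invoke Lemma \ref{lema0} to conclude $n(A)\leq k+2$ for the pertinent exponent $k$; the rest of the argument above then proceeds unchanged.
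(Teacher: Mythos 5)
Your overall architecture coincides with the paper's: both directions are reduced to the restriction $A_F$ on $F=\overline{D(A^{n(A)})}$ via Theorem \ref{kor1} and Lemma \ref{lema1234}, the dense-case equivalence theorem is applied to $A_F$ (i)$\Rightarrow$(iii) for necessity and (iii)$\Rightarrow$(i) for sufficiency, and Lemma \ref{lema1234}(c) transports the conclusion between $D_{\infty}(A_F)$ and $D_{\infty}(A)$ — your sufficiency argument is in fact identical to the paper's. The only genuine divergence is the first step of necessity, where stationary density of $A$ is established. The paper obtains it unconditionally, from a polynomial resolvent estimate on a logarithmic region (a simpler version of \cite[Theorem 2.7]{tica}) combined with Lemma \ref{lema0}. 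Your primary route — Theorem \ref{lokal-int-C} with Remark \ref{finite-order} to produce a local integrated semigroup, then Proposition \ref{pc-kunst-isto}(ii) — is sound but only under the extra structural hypotheses you yourself flag ($L(E,[D(A)])$ a quasi-complete (DF)-space, or $\mathcal{G}$ of finite order), which are not part of the theorem's hypotheses; for a general SCLCS this route does not close on its own. The fallback you sketch in your final paragraph (a resolvent bound on a logarithmic region plus Lemma \ref{lema0}) is precisely the paper's actual argument, so it should be promoted to the main line rather than left as an alternative; note that the same resolvent machinery already underlies Theorem \ref{kor1}, which you invoke in both directions anyway. One small point worth recording explicitly: when applying (i)$\Rightarrow$(iii) to $A_F$, observe that the distribution semigroup generated by $A_F$ is \emph{dense} in $F$ (Theorem \ref{kor1} together with Lemma \ref{lema1234}(a)), since the proof of the equivalence theorem uses density of $\mathcal{R}(G)$ and Theorem \ref{pom1} requires $D_{\infty}(A_F)$ dense — the paper makes this explicit by asserting that $A_F$ generates a dense distribution semigroup in $F$.
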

\begin{proof}
Let $A$ generates distribution semigroup in $E$. By simpler version of \cite[Theorem 2.7]{tica} and Lemma \ref{lema0} $A$ is stationary dense.
Now, let $n=n(A)$ and $F=\overline{D(A^n)}$. Then by Lemma \ref{lema1234} a) and Theorem \ref{kor1} $A_F$ generates a dense distribution semigroup in $F$. Then $(A_F)_{\infty}$ generates an equicontinuous semigroup in $D_{\infty}(A_F)$. By Lemma \ref{lema1234} c) follows the conclusion.\\
Opposite direction. Let $A$ is stationary dense and $A_{\infty}$ generates an equicontinous semigroup and $n=n(A)$ and $F=\overline{D(A^n)}$. Then $A_F$ generates a distribution semigroup in $F$. By Theorem \ref{kor1} we obtain that $A$ generates a distribution semigroup in $E$.
\end{proof}
\begin{thm} Let $A$ be a closed operator in $E$. Then $A$ generates an exponential distribution semigroup in $E$ if and only if $A$ is stationary dense and $A_{\infty}$ generates a quasi-equicontinuous semigroup in $D_{\infty}(A)$.\end{thm}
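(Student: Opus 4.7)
The plan is to reduce the statement to the immediately preceding theorem (the non-exponential/equicontinuous version) by a standard spectral shift. For $a \geq 0$, I set $A_a := A - aI$. Two elementary observations make the reduction work: first, $D(A_a^k) = D(A^k)$ for every $k \in \mathbb{N}_0$, so stationary denseness is invariant under this shift and $n(A)=n(A_a)$; second, on $D_\infty(A) = D_\infty(A_a)$ one has $(A_a)_\infty = A_\infty - aI$, so $A_\infty$ generates a quasi-equicontinuous semigroup $(T_\infty(t))$ of exponential type at most $a$ iff $(A_a)_\infty$ generates the equicontinuous semigroup $(e^{-at}T_\infty(t))$ in $D_\infty(A)$. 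These identifications reduce the task to verifying: $A$ generates an exponential (DS) iff $A_a$ generates an ordinary (DS) for suitable $a \geq 0$.

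For the forward direction, suppose $\mathcal{G}$ is an exponential (DS) generated by $A$, with witnessing constant $a$. I would set $\tilde{\mathcal{G}}(\varphi) := \mathcal{G}(e^{-a\cdot}\varphi)$ for $\varphi \in \mathcal{D}$. The factorization $e^{-a(t+s)} = e^{-at}e^{-as}$ gives $e^{-a\cdot}(\varphi *_0 \psi) = (e^{-a\cdot}\varphi) *_0 (e^{-a\cdot}\psi)$, from which (C.S.1) for $\tilde{\mathcal{G}}$ follows from (C.S.1) for $\mathcal{G}$; property (C.S.2) for $\tilde{\mathcal{G}}$ is immediate since $\varphi \mapsto e^{-a\cdot}\varphi$ is a bijection of $\mathcal{D}_0$; bounded equicontinuity transfers because multiplication by $e^{-a\cdot}$ is continuous on $\mathcal{D}$. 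Using the Leibniz identity $e^{-at}(-\varphi'(t)) = -(e^{-at}\varphi(t))' - ae^{-at}\varphi(t)$ together with the generator relation for $\mathcal{G}$ from Theorem \ref{fundamentalna}, a short calculation yields $\tilde{\mathcal{G}}(-\varphi')x = A_a\tilde{\mathcal{G}}(\varphi)x + \varphi(0)x$. By Theorem \ref{fundamentalna}(ii) and Proposition \ref{kisinski-uniqueness}, $\tilde{\mathcal{G}}$ is the unique (DS) generated by $A_a$. Applying the preceding theorem to $A_a$, we conclude that $A_a$ (and hence $A$) is stationary dense and $(A_a)_\infty$ generates an equicontinuous semigroup, which by the reductions above means $A_\infty$ generates a quasi-equicontinuous semigroup.

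The backward direction reverses this construction: from the assumptions on $A$ and $A_\infty$, the reductions yield that $A_a$ is stationary dense and $(A_a)_\infty$ generates an equicontinuous semigroup, so the preceding theorem gives an ordinary (DS) $\tilde{\mathcal{G}}$ generated by $A_a$. I then set $\mathcal{G}(\varphi) := \tilde{\mathcal{G}}(e^{a\cdot}\varphi)$; the analogous Leibniz and convolution identities show $\mathcal{G}$ is a (DS) generated by $A$, and the equality $e^{-a\cdot}\mathcal{G} = \tilde{\mathcal{G}} \in \mathcal{D}'_0(L(E))$ produces the tempered growth demanded by Definition \ref{exp C-wellposed}, confirming $\mathcal{G}$ is an exponential (DS).

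The main obstacle is not conceptual but organizational: Definition \ref{exp C-wellposed} phrases the exponential condition via temperedness of the solution $U_G$ rather than of $\mathcal{G}$ itself, so one must check that the multiplicative shift $\varphi \mapsto e^{\pm a\cdot}\varphi$ interacts correctly with (C.S.1), preserves bounded equicontinuity, and extends compatibly through the convolution construction of Lemma \ref{lemma2} to translate the temperedness of $e^{-a\cdot}U_G$ into that of $e^{-a\cdot}\mathcal{G}$ and back. Once these identifications are set, the theorem follows from the preceding one together with Lemma \ref{lema1234}.
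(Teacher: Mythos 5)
Your forward direction is essentially correct, though roundabout: since an exponential distribution semigroup is in particular a distribution semigroup, you can apply the preceding theorem to $A$ directly and note that an equicontinuous semigroup on $D_\infty(A)$ is a fortiori quasi-equicontinuous; no shift is needed there. The genuine gap is in your backward direction, at the single step on which the whole reduction rests. You claim that $A$ generates an exponential (DS) iff $A-a$ generates an \emph{ordinary} (DS) for suitable $a\geq 0$, and you justify the hard half by asserting that the identity $e^{-a\cdot}\mathcal{G}=\tilde{\mathcal{G}}\in\mathcal{D}'_0(L(E))$ ``produces the tempered growth demanded by Definition \ref{exp C-wellposed}.'' It does not: condition (iv) of Definition \ref{exp C-wellposed} requires $e^{-a\cdot}U_G\in\mathcal{S}'(E)$, i.e.\ temperedness, and membership in $\mathcal{D}'_0(L(E))$ carries no growth restriction at all --- a general distribution semigroup need not be tempered. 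Worse, your own multiplication argument shows that generating an ordinary (DS) is \emph{invariant} under the shift $A\mapsto A-a$ (if $\tilde{\mathcal{G}}$ is a (DS) for $A-a$, then $\varphi\mapsto\tilde{\mathcal{G}}(e^{a\cdot}\varphi)$ is a (DS) for $A$), so your claimed equivalence would say that every generator of a distribution semigroup generates an exponential one. That is false already in Banach spaces: exponential (DS) generators are characterized by polynomially bounded resolvents on a half-plane (D. Fujiwara \cite{fujiwara}), whereas there are generators of non-exponential (DS)'s whose resolvent exists only in a logarithmic region (cf.\ J. Chazarain \cite{cha}, R. Beals \cite{b41}), and no translate of a logarithmic region contains a half-plane. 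So the constructed $\mathcal{G}$ in your backward direction is a (DS) for $A$, but its exponentiality is exactly what remains unproved.

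This also diverges from what the paper does: its (admittedly skeletal) proof is not a shift reduction but an instruction to re-run the chain used for the non-exponential theorem --- Lemma \ref{lema0}, Lemma \ref{lema1234}, Theorem \ref{kor1}, and the Ushijima-type generation results such as Theorem \ref{pom1} --- in their exponential versions, where the exponential information enters through quantitative estimates (half-plane resolvent bounds, quasi-equicontinuity of $T_\infty$) rather than through a soft multiplicative shift; this is what ``the results of exponential distribution semigroups listed before'' refers to, together with the observation after Theorem \ref{opositethm} that Theorems \ref{59} and \ref{opositethm} reformulate in the exponentially $C$-wellposed case. To salvage your route you would need a \emph{tempered} variant of the preceding theorem: that quasi-equicontinuity of $T_\infty$ produces a (DS) $\tilde{\mathcal{G}}$ for $A-a$ which is moreover in $\mathcal{S}'(L(E))$ (e.g.\ via a half-plane estimate $\|R(\lambda:A)\|\leq C(1+|\lambda|)^{k}$ for $\Re\lambda>a$ in place of the logarithmic region of Theorem \ref{pom1}). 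But proving that is precisely redoing the exponential generation theory, at which point the shift $A\mapsto A-a$ buys you nothing beyond bookkeeping.
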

The proof is direct consequence having on mind the results of exponential distribution semigroups listed before.

\end{document}